\renewcommand{\eprint}[1]{#1}
\numberwithin{equation}{section}
\theoremstyle{plain}
\newtheorem{theorem}{Theorem}[section]
\newtheorem{proposition}[theorem]{Proposition}
\newtheorem{lemma}[theorem]{Lemma}
\newtheorem{corollary}[theorem]{Corollary}
\theoremstyle{definition}
\newtheorem{definition}[theorem]{Definition}
\theoremstyle{remark}
\newtheorem{remark}[theorem]{Remark}
\newtheorem{example}[theorem]{Example}
\newtheorem{condition}[theorem]{Condition}
\newcommand\bp{\begin{proof}}
\newcommand\ep{\end{proof}}
\newcommand\er{\nopagebreak\mbox{\ }\hfill$\diamondsuit$}
\newcommand{\un}{\mathds{1}}
\newcommand\C{\mathbb{C}}
\newcommand\Z{\mathbb{Z}}
\newcommand{\G}{\mathcal{G}}
\newcommand{\F}{\mathcal{F}}
\newcommand{\J}{\mathcal{J}}
\newcommand{\CcG}{{C_{c}(\mathcal{G})}}
\newcommand{\Gu}{{\mathcal{G}^{(0)}}}
\newcommand{\Gxx}{\mathcal{G}^x_x}
\newcommand\dom{\operatorname{dom}}
\newcommand\id{\operatorname{id}}
\newcommand\ord{\operatorname{ord}}
\newcommand\supp{\operatorname{supp}}
\newcommand\ess{\mathrm{ess}}
\newcommand\sing{\mathrm{sing}}
\newcommand\eps{\varepsilon}
\begin{document}

\title{Non-Hausdorff \'etale groupoids and C$^*$-algebras of left cancellative monoids}

\date{January 26, 2022; revised January 9, 2023}

\author{Sergey Neshveyev}
\address{University of Oslo, Mathematics institute}
\email{sergeyn@math.uio.no}
\thanks{S.N. is supported by the NFR project 300837 ``Quantum Symmetry''.}

\author{Gaute Schwartz}
\email{gautesc@math.uio.no}

\begin{abstract}
We study the question whether the representations defined by a dense subset of the unit space of a locally compact \'etale groupoid are enough to determine the reduced norm on the groupoid C$^*$-algebra. We present sufficient conditions for either conclusion, giving a complete answer when the isotropy groups are torsion-free. As an application we consider the groupoid $\G(S)$ associated to a left cancellative monoid $S$ by Spielberg and formulate a sufficient condition, which we call C$^*$-regularity, for the canonical map $C^*_r(\G(S))\to C^*_r(S)$ to be an isomorphism, in which case~$S$ has a well-defined full semigroup C$^*$-algebra $C^*(S)=C^*(\G(S))$. We give two related examples of left cancellative monoids $S$ and $T$ such that both are not finitely aligned and have non-Hausdorff associated \'etale groupoids, but $S$ is C$^*$-regular, while $T$ is not.
\end{abstract}

\maketitle

\section*{Introduction}

The C$^*$-algebras of non-Hausdorff locally compact groupoids were introduced by Connes in \cite{MR679730}, where the main examples were given by the holonomy groupoids of foliations. It is known that some of the basic properties of groupoid C$^*$-algebras of Hausdorff groupoids can fail in the non-Hausdorff case. One of such properties is that to compute the reduced norm it suffices to consider the representations $\rho_x\colon C_c(\G)\to B(L^2(\G_x))$ for $x$ running through any dense subset $Y\subset\Gu$. A simple counterexample is provided by the line with a double point. The first systematic study of which extra conditions on $Y$ one needs was carried out by Khoshkam and Skandalis~\cite{KhSk}. Our starting point is the simple observation, which can be viewed as a reformulation of a result in~\cite{KhSk}, that for \'etale groupoids it suffices to require that for every point $x\in\Gu\setminus Y$ there is a net in~$Y$ converging to $x$ and having no other accumulation points in $\Gxx$. As we show, this condition is in general not necessary, but it becomes so if the isotropy groups $\Gxx$ do not have too many finite subgroups, in particular, if they are torsion-free.

Our motivation for studying these questions comes from the problem of defining a full semigroup C$^*$-algebra of a left cancellative monoid. Every such monoid $S$ has a regular representation on~$\ell^2(S)$ and hence a well-defined reduced C$^*$-algebra $C^*_r(S)$. It is natural to try to define the full semigroup C$^*$-algebra as a universal C$^*$-algebra generated by isometries $v_s$, $s\in S$, such that $v_sv_t=v_{st}$, but one quickly sees that more relations are needed to get an algebra that is not unreasonably bigger than~$C^*_r(S)$. A major progress in this old problem was made by Li~\cite{MR2900468}, who realized that in~$C^*_r(S)$ there are extra relations coming from the action of $S$ on the constructible ideals of $S$, which are right ideals of the form $s_1^{-1}t_1\dots s_n^{-1}t_nS$. Soon afterwards Norling~\cite{MR3200323} observed that this has an interpretation in terms of the left inverse hull $I_\ell(S)$ of $S$: the C$^*$-algebra $C^*_r(S)$ is obtained by reducing the reduced C$^*$-algebra of the inverse semigroup $I_\ell(S)$ to an invariant subspace of its regular representation, and so the new relations in $C^*_r(S)$ arise from those in $C^*_r(I_\ell(S))$. Since the representations of inverse semigroups are a well-studied subject and the corresponding C$^*$-algebras have groupoid models defined by Paterson~\cite{MR1724106}, this opened the possibility to defining $C^*(S)$ as a groupoid C$^*$-algebra.

Specifically (see Section~\ref{sec:monoid} for details), the subrepresentation of the regular representation of~$I_\ell(S)$ defining $C^*_r(S)$ gives rise to a reduction $\G_P(S)$ of the Paterson groupoid of $I_\ell(S)$ and to a surjective homomorphism $C^*_r(\G_P(S))\to C^*_r(S)$. When this map is an isomorphism, it is natural to define~$C^*(S)$ as $C^*(\G_P(S))$. This C$^*$-algebra can be described in terms of generators and relations, since there is such a description for $C^*(I_\ell(S))$, and simultaneously its definition as a groupoid C$^*$-algebra subsumes a number of results on (partial) crossed product decompositions of semigroup C$^*$-algebras. The trouble, however, is that this does not work for all $S$, the map $C^*_r(\G_P(S))\to C^*_r(S)$ is not always an isomorphism.

In~\cite{MR4151331} Spielberg introduced, in a more general context of left cancellative small categories, a quotient $\G(S)$ of $\G_P(S)$ that kills some ``obvious'' elements in the kernel of $C^*_r(\G_P(S))\to C^*_r(S)$ (see Proposition~\ref{prop:G_P=G}). But as he showed, the canonical homomorphism $C^*_r(\G(S))\to C^*_r(S)$ can still have a nontrivial kernel. It should be said that the kernel of $C^*_r(\G(S))\to C^*_r(S)$ is small: under rather general assumptions (for example, for all countable $S$ with trivial group of units) $C^*_r(S)$ can be identified with the essential groupoid C$^*$-algebra $C^*_\ess(\G(S))$ of $\G(S)$, as defined by Kwa\'{s}niewski and Meyer~\cite{MR4246403}. Still, we do not think that this is enough to call $C^*(\G(S))$ the full semigroup C$^*$-algebra of $S$ when $C^*_r(\G(S))\ne C^*_r(S)$.

Spielberg showed that there are two sufficient conditions for the equality $C^*_r(\G(S))=C^*_r(S)$, one is that $\G(S)$ is Hausdorff, the other is that $S$ is finitely aligned, which is equivalent to saying that every constructible ideal of $S$ is finitely generated. Already the first condition covers, for example, all group embeddable monoids. For such monoids we have $\G(S)=\G_P(S)$, and the corresponding full semigroup C$^*$-algebras $C^*(S)=C^*(\G(S))=C^*(\G_P(S))$ have been recently comprehensively studied by Laca and Sehnem~\cite{LS}.

For general $S$, the question whether $C^*_r(\G(S))\to C^*_r(S)$ is an isomorphism is exactly the type of question we started with: can the reduced norm on $C_c(\G(S))$ be computed using certain dense subset $Y=\{\chi_s|s\in S\}$ of $\G(S)^{(0)}$? In this formulation it is immediate that the answer is ``yes'' when $\G(S)$ is Hausdorff. In the non-Hausdorff case we can try to use our general results to arrive to either conclusion. This leads to a simple (to formulate, but in general not to check) sufficient condition for the equality $C^*_r(\G(S))=C^*_r(S)$ that we call C$^*$-regularity. We give an example of a C$^*$-regular monoid $S$ that is not finitely aligned and such that the groupoid $\G(S)$ is non-Hausdorff. A small modification of $S$ gives a monoid $T$ with $C^*_r(\G(T))\ne C^*_r(T)$. It is interesting that the kernel of $C^*_r(\G(T))\to C^*_r(T)$ has nonzero elements already in the $*$-algebra generated by the canonical elements $v_t$, $t\in T$, so in some sense $\G(T)$ is a wrong groupoid model for the semigroup $*$-algebra of~$T$ already at the purely algebraic level.

Let us finally mention that an interesting related problem is to find groupoid models for boundary quotients of semigroup C$^*$-algebras, but we are not going to touch it in the present paper.

\bigskip

\section{\texorpdfstring{C$^*$}{C*}-algebras of non-Hausdorff \'etale groupoids}\label{sec:groupoid}

Assume $\G$ is a locally compact, not necessarily Hausdorff, \'{e}tale groupoid. By this we mean that~$\G$ is a groupoid endowed with a locally compact topology such that
\begin{enumerate}
\item[-] the groupoid operations are continuous;
\item[-] the unit space $\Gu $ is a locally compact Hausdorff space in the relative topology;
\item[-] the range map $r\colon\G\to\Gu$ and the source map $s\colon\G\to\Gu$ are local homeomorphisms.
\end{enumerate}

For an open Hausdorff subset $V\subset\G$, consider the usual space $C_c(V)$ of continuous compactly supported functions on $V$. Every such function can be extended by zero to $\G$; in general this extension is not a continuous function on $\G$. This way we can view $C_c(V)$ as a subspace of the space of functions $\operatorname{Func}(\G)$ on $\G$. For arbitrary open subsets $U\subset\G$ we denote by $C_c(U)\subset\operatorname{Func}(\G)$ the linear span of the subspaces $C_c(V)\subset\operatorname{Func}(\G)$ for all open Hausdorff subsets $V\subset U$. Instead of all possible $V$ it suffices to take a collection of open bisections covering $U$.

The space $C_c(\G)$ is a $*$-algebra with the convolution product
\begin{equation*} \label{eprod}
(f_{1}*f_{2})(g) := \sum_{h \in \G^{r(g)}} f_{1}(h) f_{2}(h^{-1}g)\quad \text{for}\quad g\in \G,
\end{equation*}
and involution $f^*(g)=\overline{f(g^{-1})}$, where $\G^x=r^{-1}(x)$. The full groupoid C$^*$-algebra $C^*(\G)$ is defined as the C$^*$-enveloping algebra of $C_c(\G)$.

For every $x\in\Gu$, define a representation $\rho_{x}\colon C_c(\G) \to B(\ell^{2}(\G_{x}))$, where $\G_x=s^{-1}(x)$, by
\begin{equation*}\label{eq:rhox}
(\rho_{x}(f)\xi)(g) =\sum_{h \in \G^{r(g)}} f(h) \xi(h^{-1}g).
\end{equation*}
Then the reduced C$^*$-algebra $C^*_r(\G)$ is defined as the completion of $C_c(\G)$ with respect to the norm
\begin{equation*}
\lVert f \rVert_{r}  = \sup_{x\in \Gu } \lVert \rho_{x}(f) \rVert.
\end{equation*}

Recall (see, e.g., \cite{MR2419901}*{Section~3}) that for all $f\in \CcG$ we have the inequalities $\lVert f\rVert_{\infty} \leq \lVert f\rVert_{r} \leq  \lVert f\rVert$, where $\lVert \cdot \rVert_{\infty}$ denotes the supremum-norm, and if $f\in C_c(U)$ for an open bisection $U$, then
\begin{equation*} \label{eq:norm-bisection}
\|f\|=\lVert f\rVert_r=\lVert f\rVert_{\infty}.
\end{equation*}

For a closed (in $\Gu$) invariant subset $X\subset\Gu$, denote by $\G_X$ the subgroupoid $r^{-1}(X)=s^{-1}(X)\subset\G$. In the second countable case the next result and the subsequent corollary follow easily from Renault's disintegration theorem, cf.~\cite{MR1191252}*{Remark~4.10}. The case of \'etale groupoids allows for the following elementary proof without any extra assumptions on $\G$.

\begin{proposition}\label{prop:exact-sequence-groupoid}
Assume $\G$ is a locally compact \'etale groupoid and $X\subset\G^{(0)}$ is a closed invariant subset. Then the following sets coincide:
\begin{enumerate}
\item[(1)] the kernel of the $*$-homomorphism $C^*(\G)\to C^*(\G_X)$, $C_c(\G)\ni f\mapsto f|_{\G_X}$;
\item[(2)] the closure of $C_c(\G\setminus\G_X)$ in $C^*(\G)$;
\item[(3)] the closed ideal of $C^*(\G)$ generated by $C_0(\Gu\setminus X)\subset C_0(\Gu)$.
\end{enumerate}
\end{proposition}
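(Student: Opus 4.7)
My plan is to run a short round-robin of inclusions, with one genuine technical step. First, $(2)\subset(1)$ is immediate because every $f\in C_c(V)$ with $V\subset\G\setminus\G_X$ open Hausdorff restricts to zero on $\G_X$. For $(3)\subset(2)$ I will verify that (2) is a closed ideal of $C^*(\G)$ containing $C_0(\Gu\setminus X)$: the ideal property uses that for open bisections $U\subset\G$ and $V\subset\G\setminus\G_X$ the product $UV$ remains in $\G\setminus\G_X$ by invariance of $X$, while the containment holds because $\Gu\setminus X$ is itself an open Hausdorff subset of $\G\setminus\G_X$. Conversely, given $f\in C_c(V)$ for an open bisection $V\subset\G\setminus\G_X$, the compact set $s(\supp f)$ lies in $\Gu\setminus X$, so Urysohn produces $h\in C_c(\Gu\setminus X)$ with $h\equiv 1$ on $s(\supp f)$, and the identity $(f*h)(g)=f(g)h(s(g))$ yields $f=f*h\in C^*(\G)\cdot C_0(\Gu\setminus X)\subset(3)$, settling $(2)\subset(3)$. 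Writing $I$ for the common ideal $(2)=(3)$, the real work is to establish $(1)\subset I$.

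My plan for this is to show that the induced surjection $\bar\pi\colon C^*(\G)/I\to C^*(\G_X)$ is an isomorphism by exhibiting an inverse. At the $C_c$-level I set $\sigma_0(g):=[\tilde g]$ for $g\in C_c(\G_X)$, where $\tilde g\in C_c(\G)$ is any lift of $g$ (such lifts exist by decomposing $g$ along a finite cover of $\supp g$ by open bisections of $\G_X$, pulling back along the range-homeomorphism from corresponding open bisections of $\G$, and extending via Tietze in $\Gu$). The whole argument then hinges on one key lemma, namely that every $f\in C_c(\G)$ with $f|_{\G_X}=0$ lies in $I$; granted this, $\sigma_0$ is independent of the lift and is a $*$-homomorphism, and it extends to $\sigma\colon C^*(\G_X)\to C^*(\G)/I$ by checking inductive-limit continuity: for $g_k\to g$ uniformly with supports in a fixed compact $K\subset\G_X$, a partition of unity subordinate to a finite cover of $K$ by $m$ open bisections allows one to lift $g_k-g$ to $\widetilde{g_k-g}\in C_c(\G)$ with $\|\widetilde{g_k-g}\|_{C^*(\G)}\le m\|g_k-g\|_\infty$, so $\tilde g_k:=\tilde g+\widetilde{g_k-g}$ gives lifts with $\tilde g_k\to\tilde g$ in $C^*(\G)$. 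The equations $\bar\pi\circ\sigma=\id$ and $\sigma\circ\bar\pi=\id$ then hold on dense subspaces.

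The main obstacle is the key lemma, and this is where étaleness is really needed. I would decompose $f=\sum_{i=1}^n f_i$ with $f_i\in C_c(V_i)$ for open bisections $V_i$, and first verify that each $f_j$ extended by zero is continuous on every $V_i$ (because $V_i\setminus V_j$ is closed in $V_i$, and the compactness of $\supp f_j$ in $V_j$ forces $f_j$ to decay to zero along any sequence in $V_i\cap V_j$ escaping to a point of $V_i\setminus V_j$). Consequently $f|_{V_i}$ is a continuous function on $V_i$ vanishing on the closed set $V_i\cap\G_X$. Given $\eps>0$, for each $i$ pick an open $W_i\subset V_i$ containing $V_i\cap\G_X$ on which $|f|<\eps$, and observe that $K_\eps:=\bigcup_i r(\supp f_i\setminus W_i)$ is a compact subset of $\Gu\setminus X$; Urysohn then provides $\psi\in C_c(\Gu\setminus X)$ with $\psi\equiv 1$ on $K_\eps$ and $0\le\psi\le 1$. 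Since $\psi\in I$, the convolution $\psi*f$ lies in $I$, while the difference $f-\psi*f=(1-\psi\circ r)\cdot f$ vanishes at every $g$ with $r(g)\in K_\eps$ and has modulus less than $\eps$ at the remaining points of $\supp f$. The decisive point is that each fibre $\G_x$ and $\G^x$ meets $\bigcup V_i$ in at most $n$ points, one per bisection, so both row- and column-sums of $|f-\psi*f|$ are bounded by $n\eps$, and the standard $I$-norm estimate $\|h\|_{C^*(\G)}\le\max\bigl(\sup_x\sum_{g\in\G_x}|h(g)|,\sup_x\sum_{g\in\G^x}|h(g)|\bigr)$ for étale groupoid $C^*$-algebras gives $\|f-\psi*f\|_{C^*(\G)}\le n\eps$. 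Letting $\eps\to 0$ puts $f$ in $\overline{I}=I$. This transition from the pointwise bound $|f-\psi*f|<\eps$ to an operator-norm bound, made possible by the finiteness of bisection-fibres, is the étale input without which the statement is known to fail in general.
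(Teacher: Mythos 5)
Your reduction to the key lemma --- that every $f\in C_c(\G)$ with $f|_{\G_X}=0$ lies in the $C^*(\G)$-closure of $C_c(\G\setminus\G_X)$ --- and your handling of the easy inclusions match the paper; the explicit inverse $\sigma$ is just a dressed-up version of the paper's observation that any representation killing $C_c(\G\setminus\G_X)$ factors through $C_c(\G_X)$ (and the inductive-limit continuity check is unnecessary, since a $*$-homomorphism from $C_c(\G_X)$ into a C$^*$-algebra is automatically dominated by the enveloping norm). The problem is your proof of the key lemma, which breaks in two places, both instances of the single phenomenon that makes the lemma nontrivial. First, the claim that $f_j\in C_c(V_j)$ extended by zero is continuous on every $V_i$ is false: in the line with a double point, with $V_1=\R$ and $V_2=(\R\setminus\{0\})\cup\{0'\}$, a function $f_1\in C_c(\R)$ with $f_1(0)=1$ extends by $f_1(0')=0$, yet $f_1(t)\to 1$ as $t\to 0'$ along $V_1\cap V_2$. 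Your justification conflates ``converging to a point of $V_i\setminus V_j$'' with ``leaving every compact subset of $V_j$''; in a non-Hausdorff groupoid a net can do the former without the latter. So $f|_{V_i}$ need not be continuous and the sets $W_i$ cannot be produced as you produce them. (The weaker statement that $|f|<\eps$ on some neighbourhood of $V_i\cap\G_X$ does hold, but only because the \emph{whole sum} $f$ vanishes on $\G_X$; proving it needs a genuine accumulation-point argument, since the individual $f_j$ can be large on $\G_X$.)

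Second, the ``standard $I$-norm estimate'' $\|h\|_{C^*(\G)}\le\|h\|_I$ is not available here. It holds for the reduced norm, but the proposition concerns the full C$^*$-algebra, i.e.\ the enveloping C$^*$-algebra of $C_c(\G)$, and the only elementary bound on an arbitrary representation is $\|\pi(h)\|\le\sum_i\|h_i\|_\infty$ over a decomposition into bisection-supported pieces. Your $h=f-\psi*f$ is pointwise small, but its natural summands $h_i=(1-\psi\circ r)f_i$ need not be small in sup-norm --- the smallness of $|h|$ comes from cancellation between the $f_i$. Converting the pointwise bound on the sum into a bound on the full norm requires re-decomposing $h$ into bisection-supported pieces that are each uniformly small, and in the non-Hausdorff case cutting $h$ with a partition of unity destroys continuity --- the same obstruction as above. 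The $I$-norm bound for the full norm is precisely what disintegration would supply under second countability, which the paper explicitly avoids assuming; its inductive claim exists exactly to manufacture a decomposition $f-\sum_i\tilde f_i\in C_c(\G\setminus\G_X)$ with $\|\tilde f_i\|_\infty<2^n\eps$, after which only the elementary identity $\|g\|=\|g\|_\infty$ for bisection-supported $g$ is needed. As written, your argument assumes the very decomposition property whose failure is the point of the proposition.
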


\bp
The sets in (2) and (3) coincide, since $C_c(\G\setminus\G_X)$ is an ideal in $C_c(\G)$ (with respect to the convolution product) and for every $f\in C_c(G\setminus\G_X)$ we can find $f'\in C_c(\Gu\setminus X)$ such that $f*f'=f$. It is also clear that $C_c(\G\setminus\G_X)$ is contained  in the kernel of the $*$-homomorphism $C^*(\G)\to C^*(\G_X)$. It follows that in order to prove the proposition it suffices to show that every representation of~$C_c(\G)$ on a Hilbert space that vanishes on $C_c(\G\setminus\G_X)$ factors through $C_c(\G_X)$. For this, in turn, it suffices to prove that $C_c(\G\setminus\G_X)$ is dense, with respect to the norm on $C^*(\G)$, in the space of functions $f\in C_c(\G)$ such that $f|_{\G_X}=0$.

\smallskip

Let us first prove the following claim. Assume $f=\sum^n_{i=1}f_i\in C_c(\G)$ satisfies $\|f|_{\G_X}\|_\infty<\eps$ for some $\eps>0$, $f_i\in C_c(U_i)$ and open bisections $U_i$. Then there exist functions $\tilde f_i\in C_c(U_i)$ such that
$$
f-\sum^n_{i=1}\tilde f_i\in C_c(\G\setminus\G_X)\quad\text{and}\quad\|\tilde f_i\|_\infty<2^n\eps\quad\text{for}\quad i=1,\dots,n.
$$

The proof is by induction on $n$. As the base of induction we take $n=0$, meaning that $f=0$. In this case there is nothing to prove. So assume the claim is true for some $n\ge0$. For the induction step assume $f\in C_c(\G)$ satisfies $\|f|_{\G_X}\|_\infty<\eps$ and we can write $f=\sum^{n+1}_{i=1}f_i$ for some $f_i\in C_c(U_i)$ and open bisections $U_i$. Let $K_{n+1}\subset U_{n+1}$ be the support of $f_{n+1}|_{U_{n+1}}$. Consider the set
$K=K_{n+1}\setminus\bigcup^n_{i=1}U_i$. As $f=f_{n+1}$ on $K$, we have
$\|f_{n+1}|_{K\cap\G_X}\|_\infty<\eps$. Hence there exists an open neighbourhood $U$ of $K\cap\G_X$ in $U_{n+1}$ such that $\|f_{n+1}|_U\|_\infty<\eps$. Let $V$ be an open neighbourhood of $K\setminus U$ in $U_{n+1}$ such that $\bar V\cap U_{n+1}\cap\G_X=\emptyset$. Then the open sets $U_1\cap U_{n+1},\dots,U_n\cap U_{n+1},U,V$ cover $K_{n+1}$. Hence we can find functions $\rho_1,\dots,\rho_n,\rho_U,\rho_V\in C_c(U_{n+1})$ taking values in the interval $[0,1]$ such that
$\supp\rho_i\subset U_i\cap U_{n+1}$, $\supp\rho_U\subset U$, $\supp\rho_V\subset V$ and
$$
\sum^n_{i=1}\rho_i(g)+\rho_U(g)+\rho_V(g)=1\quad\text{for all}\quad g\in K_{n+1}.
$$

Define $f'_i=f_i+\rho_if_{n+1}$ (pointwise product) for $i=1,\dots, n$, $f'=\sum^n_{i=1}f_i'$ and $\tilde f_{n+1}=\rho_Uf_{n+1}$. Then $f_i'\in C_c(U_i)$, $\tilde f_{n+1}\in C_c(U_{n+1})$ and we have
$$
f-f'-\tilde f_{n+1}=\rho_Vf_{n+1}\in C_c(\G\setminus\G_X).
$$
We also have $\|\tilde f_{n+1}\|_\infty\le\|f_{n+1}|_U\|_\infty<\eps< 2^{n+1}\eps$. It follows that
$$
\|f'|_{\G_X}\|_\infty=\|(f-\tilde f_{n+1})|_{\G_X}\|_\infty\le\|f|_{\G_X}\|_\infty+\|\tilde f_{n+1}|_{\G_X}\|_\infty<2\eps.
$$
We can therefore apply the inductive hypothesis to $f'$ and $2\eps$ and find functions $\tilde f_i\in C_c(U_i)$, $i=1,\dots,n$, such that
$$
f'-\sum^n_{i=1}\tilde f_i\in C_c(\G\setminus\G_X)\quad\text{and}\quad\|\tilde f_i\|_\infty<2^{n}2\eps=2^{n+1}\eps\quad\text{for}\quad i=1,\dots,n.
$$
Then the functions $\tilde f_1,\dots,\tilde f_{n+1}$ have the required properties.

\smallskip

Now, if $f\in C_c(\G)$ satisfies $f|_{\G_X}=0$, we write $f=\sum^n_{i=1}f_i$ for some $f_i\in C_c(U_i)$ and open bisections~$U_i$ and apply the above claim to an arbitrarily small $\eps>0$. Recalling that the norms $\lVert\cdot\rVert$ and $\lVert\cdot\rVert_\infty$ coincide on $C_c(U)$ for any open bisection $U$, we conclude that there is a function $\tilde f=\sum^n_{i=1}\tilde f_i\in C_c(\G)$ such that $f-\tilde f\in C_c(\G\setminus\G_X)$ and $\|\tilde f\|\le n2^n\eps$. Hence $f$ lies in the closure of $C_c(\G\setminus\G_X)$.
\ep

\begin{corollary}
We have a short exact sequence
$$
0\to C^*(\G\setminus\G_X)\to C^*(\G)\to C^*(\G_X)\to0.
$$
\end{corollary}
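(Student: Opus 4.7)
The plan is to derive the exact sequence from Proposition~\ref{prop:exact-sequence-groupoid}, which already identifies the kernel of the quotient map $C^*(\G)\to C^*(\G_X)$ as the closure of $C_c(\G\setminus\G_X)$ in $C^*(\G)$. What remains is to realize this closure as $C^*(\G\setminus\G_X)$ and to establish surjectivity of the quotient map. Since $\G\setminus\G_X$ is an open \'etale subgroupoid, $C^*(\G\setminus\G_X)$ is defined, and a decomposition into open bisections shows that $C_c(\G\setminus\G_X)$ is a two-sided $*$-ideal of $C_c(\G)$. Every $*$-representation of $C_c(\G)$ therefore restricts to one of $C_c(\G\setminus\G_X)$, giving $\|f\|_{C^*(\G)}\le\|f\|_{C^*(\G\setminus\G_X)}$ on $C_c(\G\setminus\G_X)$; by the universal property the inclusion extends to a $*$-homomorphism $\iota\colon C^*(\G\setminus\G_X)\to C^*(\G)$ whose image is the closure in question.

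For surjectivity of $C^*(\G)\to C^*(\G_X)$ it is enough that the restriction map $C_c(\G)\to C_c(\G_X)$ is onto. Writing $h\in C_c(\G_X)$ as a finite sum of $h_i\in C_c(W_i)$ for open bisections $W_i$ of $\G_X$, we may after refinement assume that each $W_i = U_i\cap\G_X$ for an open bisection $U_i$ of $\G$. Since $\G_X$ is closed in $\G$, $W_i$ is relatively closed in the locally compact Hausdorff space $U_i$, and Tietze's theorem together with multiplication by a compactly supported bump function produces $\tilde h_i\in C_c(U_i)$ restricting to $h_i$; then $\sum\tilde h_i$ restricts to $h$.

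The main obstacle is injectivity of $\iota$, i.e.\ the reverse inequality $\|f\|_{C^*(\G\setminus\G_X)}\le\|f\|_{C^*(\G)}$ on $C_c(\G\setminus\G_X)$. This amounts to extending every nondegenerate $*$-representation $\pi\colon C_c(\G\setminus\G_X)\to B(H)$ to a $*$-representation of $C_c(\G)$. Using the ideal property one sets $\tilde\pi(a)\pi(b)\xi := \pi(a * b)\xi$ for $a\in C_c(\G)$, $b\in C_c(\G\setminus\G_X)$, $\xi\in H$, and reduces to the case $a\in C_c(U)$ for an open bisection $U$. For such $a$, the element $\|a\|_\infty^2 - a^* * a$ restricts on $\Gu\setminus X$ to a nonnegative bounded continuous function, giving a positive element of the multiplier algebra $M(C^*(\G\setminus\G_X))$; its square root witnesses the matrix positivity $[b_l^*(\|a\|_\infty^2 - a^* * a)b_k]\ge 0$ needed for the bound $\|\tilde\pi(a)\eta\|\le\|a\|_\infty\|\eta\|$ on arbitrary $\eta = \sum_k \pi(b_k)\xi_k$. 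Hence $\tilde\pi$ extends to a $*$-representation of $C_c(\G)$; taking suprema over $\pi$ gives the required inequality, and combined with the surjectivity above and Proposition~\ref{prop:exact-sequence-groupoid} this yields the short exact sequence.
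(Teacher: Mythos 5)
Your proof is correct and follows essentially the same route as the paper: exactness at the middle and right terms from Proposition~\ref{prop:exact-sequence-groupoid} together with surjectivity of the restriction map, and injectivity of $C^*(\G\setminus\G_X)\to C^*(\G)$ by extending a nondegenerate representation $\pi$ along the ideal $C_c(\G\setminus\G_X)\subset C_c(\G)$ and bounding $\tilde\pi$ on $C_c(U)$ for bisections $U$ via $a^**a\in C_c(\Gu)$. The only (harmless) divergences are that you spell out the Tietze argument for surjectivity of $C_c(\G)\to C_c(\G_X)$, which the paper takes for granted, and that you obtain the contractivity bound from positivity of $\|a\|_\infty^2-a^**a$ in $M(C^*(\G\setminus\G_X))$ rather than from the paper's observation that $\tilde\pi$ on $C_c(\Gu)$ agrees with the canonical extension of $\pi|_{C_0(\Gu\setminus X)}$ to $C_0(\Gu)$.
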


\bp
Since the restriction map $C_c(\G)\to C_c(\G_X)$ is surjective, the fact that the sets in (1) and (2) coincide implies that we have an exact sequence $C^*(\G\setminus\G_X)\to C^*(\G)\to C^*(\G_X)\to0$. Therefore we only need to explain why the map $C^*(\G\setminus\G_X)\to C^*(\G)$ is injective. For this it suffices to show that any nondegenerate representation $\pi\colon C^*(\G\setminus\G_X)\to B(H)$ extends to $C^*(\G)$. Since $C_c(\G\setminus\G_X)$ is an ideal of $C_c(\G)$, we can define a representation $\tilde\pi$ of $C_c(\G)$ on $\pi(C_c(\G\setminus\G_X))H$ by possibly unbounded operators in the standard way: for $f\in C_c(\G)$, put $\tilde\pi(f)\pi(f')\xi=\pi(f*f')\xi$. On $C_c(\Gu)$ this agrees with the unique extension of $\pi|_{C_0(\Gu\setminus X)}$ to a representation of $C_0(\Gu)$. Hence $\|\tilde\pi(f)\|\le\|f\|_\infty$ for $f\in C_c(\Gu)$, and then $\|\tilde\pi(f)\|\le\|f\|$ for any open bisection $U$ and $f\in C_c(U)$, as $f^**f\in C_c(\Gu)$.
\ep

\begin{remark}[cf.~{\cite{CN}*{Remark~2.9}}]\label{rem:exotic}
Since the ideal $C_c(\G\setminus\G_X)\subset C_c(\G)$ is dense with respect to the norm on $C^*(\G)$ in the space of functions $f\in C_c(\G)$ such that $f|_{\G_X}=0$, it is also dense with respect to the reduced norm. It follows that there is a C$^*$-norm on $C_c(\G_X)$ dominating the reduced norm such that for the corresponding completion $C^*_e(\G_X)$ the sequence
$$
0\to C^*_r(\G\setminus\G_X)\to C^*_r(\G)\to C^*_e(\G_X)\to0
$$
is exact.\hfill$\diamondsuit$
\end{remark}

We now turn to the question when a set of representations $\rho_y$, $y\in Y\subset\Gu$, determines the reduced norm on $C_c(\G)$. It is easy to see that if $Y$ is $\G$-invariant, which we may always assume since the equivalence class of $\rho_x$ depends only on the orbit of $x$, a necessary condition is that $Y$ is dense in $\Gu$. But this is not enough in the non-Hausdorff case. We start with the following sufficient condition.

\begin{proposition}\label{prop:weak-containment}
Let $\G$ be a locally compact \'etale groupoid, $Y\subset\Gu$ and $x\in\Gu\setminus Y$. Assume there is a net $(y_i)_i$ in $Y$ such that $x$ is the only accumulation point of $(y_i)_i$ in~$\Gxx=\G_x\cap\G^x$. Then the representation $\rho_x$ of $C^*_r(\G)$ is weakly contained in $\bigoplus_{y\in Y}\rho_y$.
\end{proposition}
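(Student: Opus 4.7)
The plan is to show that the vector state $\omega_x(a)=\langle\rho_x(a)\delta_x,\delta_x\rangle$ of $\rho_x$ is a weak$^{*}$-limit, along $(y_i)$, of the vector states $\omega_y(a)=\langle\rho_y(a)\delta_y,\delta_y\rangle$ of $\pi:=\bigoplus_{y\in Y}\rho_y$ on $C^*_r(\G)$, and then to upgrade this to weak containment using the fact that $\delta_x$ is a cyclic vector for $\rho_x$. Without loss of generality, we may assume $y_i\to x$ in $\G$ (pass to a subnet if necessary: $x$ is an accumulation point of $(y_i)$ in $\Gxx\subset\G$, and the hypothesis is preserved under taking subnets).

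A direct calculation gives $\rho_x(f)\delta_x=f|_{\G_x}$ for every $f\in C_c(\G)$. Consequently, for any $g\in\G_x$ and any open bisection $U\ni g$, a bump $f\in C_c(U)$ with $f(g)=1$ satisfies $\rho_x(f)\delta_x=\delta_g$, since $U\cap\G_x=\{g\}$; thus $\delta_x$ is cyclic. Evaluating at units yields $\omega_x(f)=f(x)$ and $\omega_y(f)=f(y)$ for $f\in C_c(\G)$. Since $\|\omega_y\|\le 1$ for every $y$ and $C_c(\G)$ is dense in $C^*_r(\G)$, the desired weak$^{*}$-convergence $\omega_{y_i}\to\omega_x$ on $C^*_r(\G)$ reduces to the pointwise statement $f(y_i)\to f(x)$ for all $f\in C_c(\G)$.

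The pointwise convergence is the heart of the argument. Writing $f=\sum_j f_j$ with $f_j\in C_c(V_j)$ for open bisections $V_j$, it suffices to show $f_j(y_i)\to f_j(x)$ for each $j$. If $x\in V_j$, then $V_j\cap\Gu$ is an open neighbourhood of $x$ in $\Gu$, so $y_i\in V_j$ eventually, and continuity of $f_j$ on $V_j$ gives the convergence. If $x\notin V_j$, then $f_j(x)=0$ and we argue by contradiction: were $f_j(y_i)\not\to 0$, a subnet would satisfy $y_{i_\alpha}\in\supp f_j$, which is compact in $V_j$, and a further subnet would converge in $V_j$ to some $\tilde x\in V_j$. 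By continuity of the source and range maps on $V_j$, combined with $y_{i_\alpha}=s(y_{i_\alpha})=r(y_{i_\alpha})\to x$ in the Hausdorff space $\Gu$, we would obtain $s(\tilde x)=r(\tilde x)=x$, i.e.\ $\tilde x\in\Gxx$; but $\tilde x\in V_j\not\ni x$ forces $\tilde x\ne x$, contradicting the hypothesis that $x$ is the only accumulation point of $(y_i)$ in $\Gxx$.

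Once $\omega_{y_i}\to\omega_x$ weak$^{*}$, so that $\omega_x$ is a weak$^{*}$-limit of vector states of $\pi$, the GNS representation associated to $\omega_x$ is weakly contained in $\pi$; and cyclicity of $\delta_x$ identifies this GNS representation with $\rho_x$, yielding $\rho_x\prec\pi$. The crux is the second case of the bisection analysis above: the hypothesis on $(y_i)$ is calibrated precisely to exclude the non-Hausdorff pathology in which a non-unit element of $\Gxx$ appears as a secondary limit of $(y_i)$ inside some open bisection.
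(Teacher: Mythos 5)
Your proof is correct, but it takes a genuinely different route from the paper's. The paper works directly with matrix coefficients: for every $g\in\G_x$ it constructs a net $g_i\in\G_{y_i}$ with $g_i\to g$, checks that distinct $g,h$ give eventually distinct $g_i,h_i$, and proves $f(h_ig_i^{-1})\to f(hg^{-1})$ for $f$ supported on a bisection, so that every vector functional of $\rho_x$ is approximated by vector functionals of the $\rho_{y_i}$. You instead observe that $\rho_x(f)\delta_x=f|_{\G_x}$ makes $\delta_x$ cyclic, identify $\rho_x$ with the GNS representation of $\omega_x=(\rho_x(\cdot)\delta_x,\delta_x)$, and invoke the standard criterion that a cyclic representation is weakly contained in $\pi$ as soon as its cyclic state is a weak$^*$ limit of vector states of $\pi$ (equivalently, vanishes on $\ker\pi$). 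This reduces the topological work to the single diagonal statement $f(y_i)\to f(x)$, which is exactly the special case $g=h=x$ of the paper's computation, and your bisection analysis of it (continuity when $x\in V_j$; compactness of $\supp f_j$ plus Hausdorffness of $\Gu$ producing a forbidden accumulation point in $\Gxx\setminus\{x\}$ when $x\notin V_j$) is sound. What you buy is brevity: no need for the nets $(g_i)$, the eventual-distinctness lemma, or convergence of off-diagonal coefficients. What the paper's argument buys is being self-contained (no appeal to the GNS/Fell weak-containment machinery) and slightly more explicit information, namely that each vector functional of $\rho_x$ is approximated by a vector functional of a single $\rho_{y_i}$ rather than merely by states associated with $\bigoplus_y\rho_y$. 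It is worth noting that the paper's remark following the proposition, relating the hypothesis to $i(y_i)\to i(x)$ in the spectrum of the algebra generated by the functions $f|_{\Gu}$, contains essentially your key computation, so the two viewpoints are reconciled there.
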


\bp
We may assume that $y_i\to x$. For every $g\in\G_x$ we then choose a net $(g_i)_i$ converging to $g$ as follows. Let $U$ be an open bisection containing $g$. Then for all $i$ large enough we have $y_i\in s(U)$, and for every such $i$ we take the unique point $g_i\in U\cap\G_{y_i}$. For all other indices $i$ we put $g_i=y_i$.

Take $g,h\in\G_x$. Observe that by our assumptions if $g\ne h$, then $g_i\ne h_i$ for all $i$ large enough, since otherwise we could first conclude that $r(g)=r(h)$ and then that $g^{-1}h\in\Gxx$ is an accumulation point of $(y_i)_i$.

Next, take an open bisection $V$ and $f\in C_c(V)$. Then
$$
(\rho_x(f)\delta_g,\delta_h)=f(hg^{-1}),\qquad (\rho_{y_i}(f)\delta_{g_i},\delta_{h_i})=f(h_ig^{-1}_i).
$$
These equalities and the observation above imply that in order to prove the proposition it suffices to show that $f(h_ig^{-1}_i)\to f(hg^{-1})$.

Assume first that $hg^{-1}\in V$. As $V$ is open and $h_ig^{-1}_i\to hg^{-1}$, it follows that eventually $h_ig^{-1}_i\in V$. But then $f(h_ig^{-1}_i)\to f(hg^{-1})$ by the continuity of $f$ on $V$.

Assume next that $hg^{-1}\notin V$. It is then enough to show that eventually $h_ig_i^{-1}$ does not lie in the support $K$ of $f|_V$. Suppose this is not the case. Then by passing to a subnet we may assume that $h_ig_i^{-1}\to w$ for some $w\in K$. Since we also have $h_ig_i^{-1}\to hg^{-1}$, we must have $r(w)=r(h)$ and $s(w)=r(g)$. Then $h^{-1}wg\in\Gxx$, $h^{-1}wg\ne x$ and $y_i=h_i^{-1}(h_ig_i^{-1})g_i\to h^{-1}wg$, which contradicts our assumptions.
\ep

\begin{remark}
The above proposition can also be deduced from results in~\cite{KhSk}*{Section~2}. In order to make the connection to~\cite{KhSk} more transparent, let us reformulate the assumptions of Proposition~\ref{prop:weak-containment} as follows. The functions $f|_{\Gu}$ for $f\in C_c(\G)$ generate a C$^*$-subalgebra $B$ of the algebra of bounded Borel functions on $\Gu$ equipped with the supremum-norm. Let $Z$ be the spectrum of $B$. As every point of $\Gu$ defines a character of $B$, we have an injective Borel map $i\colon\Gu\to Z$ with dense image. We claim that a net as in Proposition~\ref{prop:weak-containment} exists if and only if $i(x)\in\overline{i(Y)}$.

In order to show this, assume first that $(y_i)_i$ is a net in $Y$ converging to $x$ and having no other accumulation points in $\Gxx$. We claim that then $i(y_i)\to i(x)$. It suffices to show that $f(y_i)\to f(x)$ for every open bisection $U$ and $f\in C_c(U)$. If $x\in U$, this is true by continuity of $f|_U$. If $x\notin U$, then the net $(y_i)_i$ does not have any accumulation points in $U$ and therefore it eventually lies outside the support of $f|_U$, so again $f(y_i)\to f(x)$. Conversely, assume we have a net $(y_i)_i$ in $Y$ such that $i(y_i)\to i(x)$. Then obviously $y_i\to x$. Take $g\in\Gxx\setminus\{x\}$, an open bisection $U$ containing $g$ and $f\in C_c(U)$ such that $f(g)\ne0$. As $f(y_i)\to f(x)=0$ by assumption, we conclude that $g$ cannot be an accumulation point of $(y_i)_i$.
\hfill$\diamondsuit$
\end{remark}

If $x\in(\Gu\cap\bar Y)\setminus Y$, then nonexistence of a net as in Proposition~\ref{prop:weak-containment} is equivalent to the following property:

\begin{condition}\label{condition1}
For some $n\ge1$, there are elements $g_1,\dots,g_n\in\Gxx\setminus\{x\}$, open bisections $U_1,\dots,U_n$ such that $g_k\in U_k$ and a neighbourhood $U$ of $x$ in $\Gu$ satisfying $Y\cap U\subset U_1\cup\dots\cup U_n$.
\end{condition}

Indeed, if this condition is satisfied, then any net in $Y$ converging to $x$ has one of the elements $g_1,\dots,g_n$ as its accumulation point. Conversely, assume Condition~\ref{condition1} is not satisfied. For every $g\in \Gxx\setminus\{x\}$ choose an open bisection $U_g$ containing $g$. Then for every finite set $F=\{g_1,\dots,g_n\}\subset \Gxx\setminus\{x\}$ and every neighbourhood $U$ of $x$ in $\Gu$ we can find $y_{F,U}\in (Y\cap U)\setminus (U_{g_1}\cup\dots\cup U_{g_n})$. Then $(y_{F,U})_{F,U}$, with the obvious partial order defined by inclusion of $F$'s and containment of $U$'s, is the required net.

\begin{remark}
Following the terminology of~\cite{MR4246403}, a point $x\in\Gu$ is called dangerous if there is a net in $\Gu$ converging to $x$ and to a point in $\Gxx\setminus\{x\}$. Therefore the set of points $x\in(\Gu\cap\bar Y)\setminus Y$ satisfying Condition~\ref{condition1} is a subset of dangerous points. As a consequence, if $Y$ is dense in $\Gu$ and $\G$ can be covered by countably many open bisections, then by~\cite{MR4246403}*{Lemma~7.15} the set of points $x\in\Gu\setminus Y$ satisfying Condition~\ref{condition1} is meager in $\Gu$.
\hfill$\diamondsuit$
\end{remark}

If $Y$ is $\G$-invariant and Condition~\ref{condition1} is satisfied for $n=1$, then $\rho_x$ is not weakly contained in $\bigoplus_{y\in Y}\rho_y$. In order to see this, take an open neighbourhood $V\subset U$ of $x$ such that $V\subset r(U_1)\cap s(U_1)$ and a function $f\in C_c(V)$ such that $f(x)\ne0$. Then it is easy to check that $0\ne f*(\un_{U_1}-\un_U)*f\in\ker\rho_y$ for all $y\in Y$. A simple example of such a situation is the real line with a double point at $0$, cf.~\cite{KhSk}*{Example~2.5}.

But in  general, as we will see soon,  Condition~\ref{condition1} is not enough to conclude that $\rho_x$ is not weakly contained in $\bigoplus_{y\in Y}\rho_y$.
A sufficient extra condition is given by the following proposition.

\begin{proposition}\label{prop:not-weakly-contained}
Assume $\G$ is a locally compact \'etale groupoid, $Y\subset\Gu$ is a $\G$-invariant subset and $x\in\Gu\setminus Y$ is a point satisfying Condition~\ref{condition1} such that
$$
\sum^n_{k=1}\frac{1}{\ord(g_k)}<1,
$$
where $\ord(g_k)$ is the order of $g_k$ in $\Gxx$. Then $\rho_x$ is not weakly contained in $\bigoplus_{y\in Y}\rho_y$.
\end{proposition}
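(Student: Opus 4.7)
The plan is to construct, for each sufficiently small $\epsilon>0$, a self-adjoint element $a_\epsilon\in C^*(\G)$ such that $\rho_y(a_\epsilon)=0$ for every $y\in Y$ while $\rho_x(a_\epsilon)\ne 0$; this witnesses the failure of weak containment. Set $m_k=\ord(g_k)$. After shrinking the bisections $U_1,\ldots,U_n$ and the neighbourhood $U$ of $x$, I may assume that the $j$-fold products $U_k^j:=U_k\cdots U_k$ for $1\le j\le m_k-1$ are open bisections with $g_k^j\in U_k^j$ and $U_k^j\cap\Gu=U_k\cap\Gu$, and that $Y\cap U\cap s(U_k)\subset U_k$ for each $k$ (so the ``non-unit fibres'' of $U_k$ above units of $Y\cap U$ are empty). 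Choose $\phi\in C_c(U)$ with $\phi(x)=1$, $0\le\phi\le 1$, and self-adjoint approximate averaging elements $P_k\in C_c(\G)$ built from cutoffs $f_k\in C_c(U_k)$ with $f_k(g_k)=1$ and $f_k^{*j}(g_k^j)=1$, symmetrised so that
$$P_k:=\frac{1}{m_k}\Bigl(\phi+\tfrac{1}{2}(f_k+f_k^*)+\cdots+\tfrac{1}{2}(f_k^{*(m_k-1)}+(f_k^*)^{*(m_k-1)})\Bigr)$$
is self-adjoint and satisfies $\rho_x(P_k)\delta_x=\frac{1}{m_k}\sum_{j=0}^{m_k-1}\delta_{g_k^j}$. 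Let $R:=\sum_{k=1}^n P_k\in C_c(\G)$, pick $h_\epsilon\in C_0((0,\|R\|])$ with $h_\epsilon\equiv 1$ on $[\epsilon,\|R\|]$ and $h_\epsilon(0)=0$, and define $a_\epsilon:=\phi^2-\phi\,h_\epsilon(R)\,\phi\in C^*(\G)$.

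The crucial observation is that $\rho_y(R)$ is a bounded multiplication operator on $\ell^2(\G_y)$ for every $y\in Y$, diagonal in the canonical basis $(\delta_z)_{z\in\G_y}$. Indeed, by $\G$-invariance of $Y$, $r(z)\in Y$ for any $z\in\G_y$. If $r(z)\in U$, Condition~\ref{condition1} puts $r(z)$ in some $U_k$, and the unique element of $U_k^j$ with source $r(z)$ is then $r(z)$ itself, which gives $\rho_y(f_k^{*j})\delta_z=\delta_z$ and, after combining with the other bisections (whose non-unit fibres over $r(z)$ have been shrunk away), yields $\rho_y(R)\delta_z=c(r(z))\delta_z$ with $c(r(z))\ge 1$. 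If $r(z)\notin U$ then $\phi(r(z))=0$ and, by further shrinking, $\rho_y(R)\delta_z=0$. For $\epsilon<1$ we therefore have $h_\epsilon(\rho_y(R))\delta_z=\delta_z$ when $r(z)\in U$ and $h_\epsilon(\rho_y(R))\delta_z=0$ otherwise, so in both cases $\rho_y(a_\epsilon)\delta_z=\phi(r(z))^2\bigl(1-h_\epsilon(c(r(z)))\bigr)\delta_z=0$. Hence $\rho_y(a_\epsilon)=0$ for every $y\in Y$ and every $\epsilon<1$.

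By contrast, $\rho_x(R)$ is far from being a multiplication operator: its restriction to $\ell^2(\Gxx)\subset\ell^2(\G_x)$ coincides with the sum of the canonical averaging projections onto the $\langle g_k\rangle$-invariants in the left regular representation of the isotropy group, each of canonical trace $1/m_k$ in the group von Neumann algebra $L(\Gxx)$. Subadditivity of the canonical trace on suprema of projections gives $\tau(P_\vee)\le\sum_k 1/m_k<1$, where $P_\vee$ is the range projection of $\rho_x(R)|_{\ell^2(\Gxx)}$, and faithfulness of $\tau$ yields $\|(I-P_\vee)\delta_x\|^2=1-\tau(P_\vee)>0$. As $\epsilon\to 0$, $h_\epsilon(\rho_x(R))\delta_x\to P_\vee\delta_x$ strongly, whence $\rho_x(a_\epsilon)\delta_x\to(I-P_\vee)\delta_x\ne 0$. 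Consequently $\|\rho_x(a_\epsilon)\|>0=\sup_{y\in Y}\|\rho_y(a_\epsilon)\|$ for all sufficiently small $\epsilon>0$, and $\rho_x$ is not weakly contained in $\bigoplus_{y\in Y}\rho_y$.

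The principal technical step is the multiplication-operator description of $\rho_y(R)$, which rests on the careful shrinking of the bisections $U_k$ to eliminate contributions from non-unit fibres above units of $Y\cap U$; this is what couples Condition~\ref{condition1} to a diagonal structure on each $\ell^2(\G_y)$. Once this reduction is secured, the spectral argument combined with the trace estimate $\tau(P_\vee)\le\sum 1/m_k<1$ furnished by the numerical hypothesis finishes the proof.
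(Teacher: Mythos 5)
There is a genuine gap, and it is at the heart of your argument: the ``shrinking'' step that makes $\rho_y(R)$ a diagonal operator is impossible in general. You want to arrange $Y\cap U\cap s(U_k)\subset U_k$, i.e.\ that $U_k$ contains no non-unit elements whose source lies in $Y\cap U$. But in the situations the proposition is designed for, there is a net of \emph{non-units} $h_i\in U_k$ with $s(h_i)\in Y\cap U$ converging to $g_k$, so every open bisection containing $g_k$ contains such non-units and no shrinking can remove them. This happens already in the paper's own examples: in the example following Corollary~\ref{cor:criterion} the germs of $(1,0)$ at points of $X_1$ are non-units accumulating at $g_1$; and for the monoid $T$ of~\eqref{eq:monoid-T} (where the hypothesis $\sum_k 1/\ord(g_k)=0<1$ holds and the proposition is actually applied), the fibre of $D(a,\Omega(T))$ over $\chi_{by_n}$ is the non-unit $[a,\chi_{by_n}]$, and these converge to $g_1=[a,\chi]$. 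Consequently $\rho_y(f_{k'})\delta_z=f_{k'}(h)\delta_{hz}$ with $hz\ne z$ for such $k'$, the operator $\rho_y(R)$ has unavoidable off-diagonal terms, and there is no reason for $\rho_y(a_\eps)$ to vanish. This off-diagonal interference between the several bisections $U_k$ over a single point of $Y$ is precisely the phenomenon that makes the proposition delicate (and makes its conclusion \emph{false} without the numerical hypothesis, as the example shows); an argument that eliminates it by fiat cannot work. The paper's proof never attempts to diagonalize $\rho_Y$: it compares the state $(\rho_x(\cdot)\delta_x,\delta_x)=\tau\circ\vartheta_{x,r}$ with vector states of $\rho_Y$, uses that each $(i,g)$ with $r(g)\in Y\cap V\cap U_k$ yields a genuine character of the singly generated algebra $A_k=C^*(f_k)$ close to $1$ at $f_k$, and then a pigeonhole argument plus the quantitative Lemma~\ref{lem:non-weakly-contained} (where $\sum_k 1/m_k<1$ enters) produces the contradiction.

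Two further points would need attention even if the diagonal reduction were available. First, your formula $P_k=\frac{1}{m_k}(\cdots)$ is meaningless when $\ord(g_k)=+\infty$, which is exactly the case arising for the monoid $T$; the paper's lemma treats $m=+\infty$ by truncating to a finite $m'$ with $1/m'<\alpha$. Second, you apply the continuous functional calculus with $h_\eps\in C_0((0,\|R\|])$ and identify $\lim_{\eps\to0}h_\eps(\rho_x(R))$ with a range projection, which presupposes $\rho_x(R)\ge0$ and, for the trace estimate, that the $p_k$ are honest projections; neither is established for your symmetrised $P_k$ (a sum like $\frac12(u+u^*)$ for a unitary $u$ is self-adjoint but not positive). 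These are secondary to the main gap, but they would have to be repaired as well.
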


For the proof we need the following simple lemma.

\begin{lemma}\label{lem:non-weakly-contained}
Let $A=C^*(a)$ be a C$^*$-algebra generated by a contraction $a$. Assume that for some $m\in\{2,3,\dots,+\infty\}$ we have a $*$-homomorphism $\pi\colon A\to C^*(\Z/m\Z)$  such that $\pi(a)=u$, where~$u$ is the unitary generator of $C^*(\Z/m\Z)$. Take numbers $\alpha>0$ and $\eps>0$ and denote by $\Omega_{\alpha,\eps}$ the convex set of states $\varphi$ on $A$ such that
$$
\varphi\ge\alpha\sum^p_{l=1}\lambda_l\chi_l
$$
for some $p\ge1$, $\lambda_1,\dots,\lambda_p\ge0$, $\sum^p_{l=1}\lambda_l=1$, and characters $\chi_1,\dots,\chi_p\colon A\to\C$ such that $|1-\chi_l(a)|<\eps$ for all $l$. Then, for every $\alpha>1/m$, there is $\eps>0$ depending only on $m$ and $\alpha$ such that $\tau\circ\pi$ does not belong to the weak$^*$ closure of $\Omega_{\alpha,\eps}$, where $\tau$ is the canonical trace on $C^*(\Z/m\Z)$.
\end{lemma}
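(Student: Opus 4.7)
The plan is to produce a single self-adjoint positive element $q\in A$ and an explicit threshold $\eps_0>0$, depending only on $m$ and $\alpha$, such that for $0<\eps<\eps_0$ the value $\varphi(q)$ is bounded below by a constant strictly larger than $(\tau\circ\pi)(q)$ for every $\varphi\in\Omega_{\alpha,\eps}$. Since evaluation at a fixed element of $A$ is weak$^*$ continuous, such a bound automatically persists in the weak$^*$ closure of $\Omega_{\alpha,\eps}$ and so excludes $\tau\circ\pi$ from it.

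For an integer $N\geq 2$ to be chosen in a moment, I set
$$
b_N=\frac{1}{N}\sum_{k=0}^{N-1}a^k\in A\quad\text{and}\quad q_N=b_N^*b_N\geq 0.
$$
Then $\pi(b_N)=\frac{1}{N}\sum_{k=0}^{N-1}u^k$, and using the defining property of the canonical trace on $C^*(\Z/m\Z)$ (namely $\tau(u^j)=1$ if $u^j=1$ and $0$ otherwise) one computes
$$
(\tau\circ\pi)(q_N)=\begin{cases} 1/m & \text{if } m<\infty \text{ and } N=m,\\ 1/N & \text{if } m=\infty.\end{cases}
$$
I take $N=m$ in the finite case and any integer $N>1/\alpha$ in the infinite case. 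Since $\alpha>1/m$, in either case $(\tau\circ\pi)(q_N)<\alpha$, and $N$ is determined by $m$ and $\alpha$ alone.

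Now every character $\chi$ of $A$ sends $a$ to a point $z=\chi(a)$ in the closed unit disk, since $a$ is a contraction. Writing $\chi(b_N)-1=\frac{1}{N}\sum_{k=0}^{N-1}(z^k-1)$ and using the elementary bound $|z^k-1|\leq k|z-1|$ valid when $|z|\leq 1$, I obtain $|\chi(b_N)-1|\leq \frac{N-1}{2}|z-1|<N\eps/2$ whenever $|1-\chi(a)|<\eps$. Hence $\chi(q_N)=|\chi(b_N)|^2\geq(1-N\eps/2)^2\geq 1-N\eps$, and for any $\varphi\in\Omega_{\alpha,\eps}$ with decomposition $\varphi\geq\alpha\sum_l\lambda_l\chi_l$ this yields
$$
\varphi(q_N)\geq \alpha\sum_l\lambda_l\chi_l(q_N)\geq \alpha(1-N\eps),
$$
using $\sum_l\lambda_l=1$. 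Since $q_N$ is a fixed element of $A$, the half-space $\{\varphi\in S(A):\varphi(q_N)\geq\alpha(1-N\eps)\}$ is weak$^*$ closed, so the inequality survives passage to weak$^*$ limits.

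It remains to pick $\eps_0>0$ small enough that $\alpha(1-N\eps)>(\tau\circ\pi)(q_N)$ for $0<\eps<\eps_0$; this is possible because $(\tau\circ\pi)(q_N)<\alpha$, and $\eps_0$ depends only on $m$ and $\alpha$ since $N$ does. I do not anticipate any real obstacle: the content of the argument is simply that $b_N$ lifts the projection onto the trivial representation of $\Z/m\Z$ (with the obvious approximation when $m=\infty$), while the contraction hypothesis on $a$ makes this lift close to $1$ on every character with $\chi(a)$ near $1$, uniformly in $N$.
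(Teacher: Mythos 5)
Your proof is correct and is essentially the paper's argument: the paper uses the test element $b=\frac{1}{m^2}\sum_{k,l=1}^m (a^k)^*a^l$ (with $m$ replaced by some $m'>1/\alpha$ when $m=\infty$), which is your $q_N$ up to shifting the summation range from $\{0,\dots,N-1\}$ to $\{1,\dots,N\}$, and it derives the same two bounds $\tau(\pi(b))=1/m$ versus $\varphi(b)\ge\alpha(1-m\eps)^2$ before separating them by choosing $\eps$ small. The one point to repair: because your sum starts at $k=0$, the element $q_N$ contains a multiple of the identity and so lies in the unitization $\tilde A$ rather than in $A$ (which is non-unital in the paper's application, where $a$ is a compactly supported function); either work in $\tilde A$ explicitly, observing that states and characters extend canonically and that evaluation at $q_N$ is still an affine weak$^*$-continuous function of $\varphi$, or simply sum over $k=1,\dots,N$ as the paper does.
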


Here we use the convention $\Z/m\Z=\Z$ for $m=+\infty$.

\bp
Assume first that $m$ is finite. Consider the positive element $b\in A$ defined by
$$
b=\frac{1}{m^2}\sum^m_{k,l=1}(a^k)^*a^l.
$$
Then $\tau(\pi(b))=1/m$. On the other hand, if $\chi$ is a character on $A$ such that $|1-\chi(a)|<\eps$, then
$$
|1-\chi(a)^k|\le k|1-\chi(a)|<m\eps\quad\text{for all}\quad 1\le k\le m,
$$
hence, assuming $m\eps<1$, we have
$$
\chi(b)=\Big|\frac{1}{m}\sum^m_{k=1}\chi(a)^k\Big|^2>(1-m\eps)^2
$$
and therefore
$$
\varphi(b)\ge\alpha (1-m\eps)^2\quad\text{for all}\quad \varphi\in\Omega_{\alpha,\eps}.
$$
It follows that $\tau\circ\pi\notin\bar\Omega_{\alpha,\eps}$ as long as $\eps$ is small enough so that $\alpha (1-m\eps)^2>1/m$.

\smallskip

Assume now that $m=+\infty$. Choose $m'\ge1$ such that $1/m'<\alpha$. Then the same arguments as above with $m$ replaced by $m'$ show that $\tau\circ\pi\notin\bar\Omega_{\alpha,\eps}$ as long as $1-m'\eps>1/\sqrt{m'\alpha}$.
\ep

\bp[Proof of Proposition~\ref{prop:not-weakly-contained}]
Let $g_1,\dots,g_n$ be as in the formulation of the proposition and $U,U_1,\dots,U_n$ be given by Condition~\ref{condition1}. Choose functions $f_k\in C_c(U_k)$ such that $0\le f_k\le1$ and $f_k(g_k)=1$. Consider the C$^*$-subalgebras $A_k$ of $C^*_r(\G)$ generated by $f_k$.

Consider the restriction map $C_c(\G)\to C_c(\Gxx)$, $f\mapsto f|_{\Gxx}$. It extends to a completely positive contraction $\vartheta_{x,r}\colon C^*_r(\G)\to C^*_r(\Gxx)$, with the elements $f_k$ contained in its multiplicative domain, see~\cite{CN}*{Lemmas~1.2 and~1.4}. By restricting $\vartheta_{x,r}$ to $A_k$ we therefore get $*$-homomorphisms $\pi_k\colon A_k\to C^*_r(\Gxx)$. The image of $\pi_k$ is $C^*(G_k)\subset C^*_r(\Gxx)$, where $G_k$ is the subgroup of $\Gxx$ generated by $g_k$. Therefore if we let $m_k=\ord(g_k)$, then we can view each $\pi_k$ as a $*$-homomorphism $A_k\to C^*(\Z/m_k\Z)$.

Choose numbers $\alpha_k>1/m_k$ such that $\sum^n_{k=1}\alpha_k<1$. Let $\eps_k>0$ be given by Lemma~\ref{lem:non-weakly-contained} for the homomorphism $\pi_k\colon A_k\to C^*(\Z/m_k\Z)$, $\alpha=\alpha_k$ and $a=f_k$. Put $\eps=\min_{1\le k\le n}\eps_k$. Choose an open neighbourhood $V$ of $x$ in $\Gu$ such that $V\subset U$ and
\begin{equation}\label{eq:fk}
f_k(g)>1-\eps\quad\text{for all}\quad g\in r^{-1}(V)\cap U_k\quad\text{and}\quad 1\le k\le n.
\end{equation}
Let $f\in C_c(V)$ be such that $0\le f\le 1$ and $f(x)=1$.

Now, denote $\bigoplus_{y\in Y}\rho_y$ by $\rho_Y$ and assume that $\rho_x$ is weakly contained in $\rho_Y$. Denoting the canonical trace on $C^*_r(\Gxx)$ by $\tau$, it follows that $\tau\circ\vartheta_{x,r}=(\rho_x(\cdot)\delta_x,\delta_x)$ lies in the weak$^*$ closure of the states $\varphi$ of the form
\begin{equation}\label{eq:phi}
\varphi=\sum^N_{i=1}(\rho_Y(\cdot)\xi_i,\xi_i),
\end{equation}
where $\xi_i$ are finitely supported functions on $s^{-1}(Y)$ such that
$$
\sum^N_{i=1}\|\xi_i\|^2=\sum^N_{i=1}\sum_{g\in s^{-1}(Y)}|\xi_i(g)|^2=1.
$$
As $\tau(\vartheta_{x,r}(f))=f(x)=1$, it suffices to consider states such that
$$
\varphi(f)>\sum^n_{k=1}\alpha_k.
$$
Since
$$
\varphi(f)=\sum^N_{i=1}\sum_{g\in s^{-1}(Y)}f(r(g))|\xi_i(g)|^2=\sum^N_{i=1}\sum_{g\in r^{-1}(Y)}f(r(g))|\xi_i(g)|^2,
$$
where we used that $r^{-1}(Y)=s^{-1}(Y)$ by the invariance of $Y$, and $f$ is zero outside $V$, this implies that
\begin{equation}\label{eq:sum-xi}
\sum^N_{i=1}\sum_{g\in r^{-1}(Y\cap V)}|\xi_i(g)|^2>\sum^n_{k=1}\alpha_k.
\end{equation}
We claim that then for some $k$ we must have $\varphi|_{A_k}\in\Omega^{(k)}_{\alpha_k,\eps}$, where $\Omega^{(k)}_{\alpha_k,\eps}$ is defined as in Lemma~\ref{lem:non-weakly-contained}.

Denote by $X_k$ the finite set of pairs $(i,g)$, $1\le i\le N$, $g\in r^{-1}(Y\cap V)$, such that $\xi_i(g)\ne0$ and $r(g)\in U_k$. As $Y\cap V\subset U_1\cup\dots \cup U_n$ by assumption, the inequality~\eqref{eq:sum-xi} implies
$$
\sum^n_{k=1}\sum_{(i,g)\in X_k}|\xi_i(g)|^2>\sum^n_{k=1}\alpha_k.
$$
It follows that for some $k$ we have
\begin{equation}\label{eq:sum-xi2}
\sum_{(i,g)\in X_k}|\xi_i(g)|^2>\alpha_k.
\end{equation}

If $(i,g)\in X_k$, then
$$
\rho_Y(f_k)\delta_g=\rho_Y(f_k^*)\delta_g=f(r(g))\delta_g.
$$
Therefore every such point $(i,g)$ defines a one-dimensional subrepresentation of $\rho_Y|_{A_k}$ and a character $\chi_{i,g}\colon A_k\to\C$ satisfying $\chi_{i,g}(f_k)>1-\eps$ by~\eqref{eq:fk}. Then on $A_k$ we have
$$
(\rho_Y(\cdot)\xi_i,\xi_i)=(\rho_Y(\cdot)\tilde\xi_i,\tilde\xi_i)+\sum_{g:(i,g)\in X_k}|\xi_i(g)|^2\chi_{i,g},
$$
where $\tilde\xi_i(g)=\xi_i(g)$ if $(i,g)\notin X_k$ and $\tilde\xi_i(g)=0$ otherwise. By~\eqref{eq:sum-xi2} this implies that $\varphi|_{A_k}\in\Omega^{(k)}_{\alpha_k,\eps}$, proving our claim.

It follows that if there is a net of states of the form~\eqref{eq:phi} that converges weakly$^*$ to $\tau\circ\vartheta_{x,r}$, then by passing to a subnet $(\varphi_j)_j$ we can find an index $k$, $1\le k\le n$, such that $\varphi_j|_{A_k}\in\Omega^{(k)}_{\alpha_k,\eps}$ for all $j$. This contradicts Lemma~\ref{lem:non-weakly-contained}.
\ep

By combining this with Proposition~\ref{prop:weak-containment} we get the following criterion.

\begin{corollary}\label{cor:criterion}
Let $\G$ be a locally compact \'etale groupoid, $Y\subset\Gu$ a $\G$-invariant subset and $x\in(\Gu\cap \bar Y)\setminus Y$. Assume that for every finite set of distinct cyclic nontrivial subgroups $G_1,\dots,G_n$ of $\Gxx$ we have
\begin{equation}\label{eq:few-subgroups}
\sum^n_{k=1}\frac{1}{|G_k|}<1.
\end{equation}
Then $\rho_x$ is weakly contained in $\bigoplus_{y\in Y}\rho_y$ if and only if Condition~\ref{condition1} is not satisfied (equivalently, if and only if there is a net in $Y$ such that $x$ is its unique accumulation point in $\Gxx$).
\end{corollary}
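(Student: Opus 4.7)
The plan is to split the proof into the two directions, handling each with one of the propositions of the section.

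For the ``if'' direction I would simply observe that the discussion following Condition~\ref{condition1} already shows its negation is equivalent to the existence of a net in $Y$ converging to $x$ whose only accumulation point in $\Gxx$ is $x$; Proposition~\ref{prop:weak-containment} then gives the weak containment directly. This direction is essentially book-keeping.

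For the ``only if'' direction I argue by contrapositive: assuming Condition~\ref{condition1} holds with data $(g_k,U_k)_{k=1}^n$ and neighbourhood $U$ of $x$, I aim to invoke Proposition~\ref{prop:not-weakly-contained}. That proposition requires $\sum_k 1/\ord(g_k)<1$, so first I would examine the given data. Writing $H_1,\dots,H_m$ for the distinct cyclic subgroups $\langle g_k\rangle$ and $n_j$ for the multiplicity of $H_j$, I have $\sum_k 1/\ord(g_k)=\sum_j n_j/|H_j|$, while the hypothesis provides only $\sum_j 1/|H_j|<1$. So the task is to refine the data so that each $n_j=1$; once this is done, Proposition~\ref{prop:not-weakly-contained} applies and gives the desired non-containment immediately.

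My plan for the refinement is to eliminate duplicates one at a time. If $g_1$ and $g_2$ both generate the cyclic subgroup $H_j$, say with $g_2=g_1^{\ell}$, then $U_1^{\ell}$ is an open bisection containing $g_2$ (products of open bisections are open bisections in an étale groupoid), and more generally the sets $U_1^k$ give canonical bisections around every non-identity element of $H_j$. Using such products I would construct a single bisection around $g_1$ that absorbs the covering role of $U_2$, so that the pair $(U_1,U_2)$ can be replaced by one bisection and Condition~\ref{condition1} persists with one fewer generator of $H_j$ after shrinking $U$. Iterating this step eliminates all duplicates.

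The hard part will be precisely this absorption step. In the non-Hausdorff setting two bisections around the same groupoid element can have genuinely different germs at $x$, so a bisection around $g_1$ need not automatically contain the units that $U_2$ contains. Overcoming this relies on the fact that $g_1$ and $g_2$ lie in a common cyclic subgroup, which supplies enough groupoid-multiplicative room to transfer fixed-point information between bisections via the $U_1^k$. Making this reduction precise is the delicate core of the proof; once it is in place, Proposition~\ref{prop:not-weakly-contained} concludes the argument.
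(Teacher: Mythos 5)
Your overall strategy is the paper's: the ``if'' direction is Proposition~\ref{prop:weak-containment} combined with the equivalence discussed after Condition~\ref{condition1}, and the ``only if'' direction reduces to Proposition~\ref{prop:not-weakly-contained} by first arranging that the elements $g_1,\dots,g_n$ in Condition~\ref{condition1} generate pairwise distinct subgroups. You have correctly located the one nontrivial point (the multiplicities $n_j$ versus the hypothesis~\eqref{eq:few-subgroups}), but you stop exactly there: the ``absorption step'' is announced as the delicate core and never carried out, so as written the argument is incomplete.

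The step you are worried about is in fact not delicate, and the obstacle you name is not a real one. Two open bisections $V_1,V_2$ containing the \emph{same} element $g\in\Gxx$ cannot have different germs of unit sets at $x$: the set $N=s(V_1\cap V_2)$ is an open neighbourhood of $x=s(g)$ in $\Gu$, and for a unit $y\in N$ there is $c\in V_1\cap V_2$ with $s(c)=y$; if moreover $y\in V_1$, then injectivity of $s$ on the bisection $V_1$ forces $c=y$, so $y\in V_1\cap V_2$. Hence $(V_1\cup V_2)\cap\Gu\cap N=(V_1\cap V_2)\cap\Gu\cap N$, and the two pairs $(g,V_1)$, $(g,V_2)$ may be replaced by the single pair $(g,V_1\cap V_2)$ after shrinking $U$ to $U\cap N$. (The genuinely different germs occur only for bisections around \emph{distinct} elements of $\Gxx$.) Combined with your own observation that $U_1^{\ell}$ is an open bisection containing $g_2=g_1^{\ell}$, together with the fact that every unit $y$ of $U_1$ satisfies $y=y^{\ell}\in U_1^{\ell}$, this finishes the reduction: replace $(g_1,U_1)$ by $(g_2,U_1^{\ell})$, which does not lose any units from the cover, and then merge with $(g_2,U_2)$ as above; for infinite cyclic isotropy the only nontrivial case is $\ell=-1$, handled by $U_1^{-1}$. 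Iterating yields Condition~\ref{condition1} with elements generating distinct subgroups, whence $\sum_k1/\ord(g_k)<1$ and Proposition~\ref{prop:not-weakly-contained} applies. This is precisely the reduction the paper invokes (as an unproved ``observation'' that one of $g_k,g_l$ may be omitted); your proposal is the same proof with its one essential lemma left open.
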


\bp
The ``only if'' part follows from Proposition~\ref{prop:not-weakly-contained} by observing that if Condition~\ref{condition1} is satisfied for $g_1,\dots,g_n$ and some elements $g_k$ and $g_l$ ($k\ne l$) generate the same subgroup, then Condition~\ref{condition1} is still satisfied if we omit $g_k$ or $g_l$. Therefore in Condition~\ref{condition1} we may assume in addition that the elements $g_1,\dots, g_n$ generate different subgroups.
\ep

Condition~\eqref{eq:few-subgroups} is most probably not optimal, but as the following example shows, some assumptions are needed for the conclusion of the corollary to be true.

\begin{example}
Let $X$ be the disjoint union of countably many copies of $\{0,1\}$. Consider the involutive map $S\colon X\to X$ that acts as a flip on every copy of $\{0,1\}$. Take three copies $X_1,X_2,X_3$ of $X$ and let $X^+$ be the one-point compactification of the discrete set $X_1\sqcup X_2\sqcup X_3$. Define an action of $\Gamma=(\Z/2\Z)^2$ on $X^+$ as follows: the element $(1,0)$ acts by $S$ on $X_1$ and $X_2$ and trivially on $X_3$ and $\infty$, the element $(0,1)$ acts by $S$ on $X_2$ and $X_3$ and trivially on $X_1$ and $\infty$.
Consider the corresponding groupoid of germs $\G$, so $\G$ is the quotient of the transformation groupoid $\Gamma\ltimes X^+$ by the equivalence relation defined by $(h,x)\sim(g,x)$ iff $hy=gy$ for all $y$ in a neighbourhood of~$x$. Thus, if we ignore the topology on $\G$, our groupoid is the disjoint union of $\Gamma\times\{\infty\}\cong \Gamma$ and three copies of $(\Z/2\Z)\ltimes_S X$.

Consider the set $Y=X^+\setminus\{\infty\}$ and the point $x=\infty$. Condition~\ref{condition1} is satisfied for $n=3$, since~$Y$ is discrete and for every point $y\in Y$ there is $g\in \Gamma\setminus\{0\}$ that acts trivially on $y$. We claim that nevertheless $\rho_x$ is weakly contained in $\bigoplus_{y\in Y}\rho_y$.

We have a short exact sequence
$$
0\to C^*_r(\G\setminus\Gxx)\to C^*_r(\G)\xrightarrow{\rho_x}C^*(\Gamma)\to0.
$$
It has a canonical splitting $\psi\colon C^*(\Gamma)\to C^*_r(\G)$ defined as follows. For every $g\in \Gamma$ consider the image $U_g$ of the set $\{(g,x^+)\mid x^+\in X^+\}\subset \Gamma\ltimes X^+$ in $\G$. The sets $U_g\subset\G$ are bisections and their characteristic functions span a copy of $C^*(\Gamma)$ in $C^*_r(\G)$. We define $\psi(\lambda_g)=\un_{U_g}$.

For $i=1,2,3$, let $x_{in}$ be $0$ in the $n$th copy of $\{0,1\}$ in $X_i$. Then every $f\in C_c(\G\setminus\Gxx)$ is contained in $\ker\rho_{x_{in}}$ for all $n$ sufficiently large. On the other hand, $\rho_{x_{in}}\circ\psi$ is equivalent to the representation~$\lambda_i$ obtained by composing the regular representation of $C^*(\Z/2\Z)$ with the homomorphism $C^*(\Gamma)\to C^*(\Z/2\Z)$ defined by one of the three nontrivial homomorphisms $\Gamma\to\Z/2\Z$. Namely, for $i=1$ we get the homomorphism that maps $(1,0)$ and $(1,1)$ into $1\in\Z/2\Z$, for $i=2$ it maps $(1,0)$ and $(0,1)$ into $1$, and for $i=3$ it maps $(0,1)$ and $(1,1)$ into $1$. As $\lambda_1\oplus\lambda_2\oplus\lambda_3$ is a faithful representation of $C^*(\Gamma)$ and $C_c(\G\setminus\Gxx)+\psi(C^*(\Gamma))$ is dense in $C^*_r(\G)$, it follows that $\rho_x$ is weakly contained in $\bigoplus^\infty_{n=1}\bigoplus^3_{i=1}\rho_{x_{in}}$.\er
\end{example}

We finish the section with a short discussion of essential groupoid C$^*$-algebras. Following~\cite{MR4246403}, define
$$
J_\sing=\{a\in C^*_r(\G)\mid\ \text{the set of}\ x\in \G^{(0)}\ \text{such that}\ \rho_x(a)\delta_x\ne0\ \text{is meager}\}.
$$
This is a closed ideal in $C^*_r(\G)$; in order to see that it is a right ideal, note that if $U\subset\G$ is an open bisection and $f\in C_c(U)$, then for all $x\in s(U)$ we have
\begin{equation}\label{eq:ess1}
\|\rho_x(a*f)\delta_x\|=|f(g_x)|\|\rho_{T(x)}(a)\delta_{T(x)}\|,
\end{equation}
where $g_x$ is the unique element in $U\cap\G_x$ and $T\colon s(U)\to r(U)$ is the homeomorphism defined by $T(x)=r(g_x)$. The essential groupoid C$^*$-algebra of $\G$ is defined by
$$
C^*_\ess(\G)=C^*_r(\G)/J_\sing.
$$

\begin{proposition}[cf.~{\cite{MR4246403}*{Proposition~7.18}}] \label{prop:essential}
Assume $\G$ is a locally compact \'etale groupoid that can be covered by countably many open bisections. Let $D_0\subset\Gu$ be the set of points $x\in\Gu$ satisfying the following property: there exist elements $g_1,\dots,g_n\in\Gxx\setminus\{x\}$, open bisections $U_1,\dots,U_n$ and an open neighbourhood $U$ of $x$ in $\Gu$ such that $g_k\in U_k$ for all $k$ and
$U\setminus (U_1\cup\dots \cup U_n)$ has empty interior. Let~$Y$ be a dense subset of $\Gu\setminus D_0$. Then
$$
J_\sing=\{a\in C^*_r(\G)\mid\ \text{the set of}\ x\in \G^{(0)}\ \text{such that}\ \rho_x(a)\ne0\ \text{is meager}\}=\bigcap_{y\in Y}\ker\rho_y.
$$
In particular, if $D_0=\emptyset$, then $J_\sing=0$.
\end{proposition}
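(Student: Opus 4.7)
The plan is to establish the two equalities in turn. Writing $E_a = \{x \in \Gu : \rho_x(a)\delta_x \ne 0\}$ and $F_a = \{x \in \Gu : \rho_x(a) \ne 0\}$, the first equality follows from the identity $F_a = s(r^{-1}(E_a))$: the standard unitary $\ell^2(\G_x) \to \ell^2(\G_{r(g)})$ sending $\delta_g$ to $\delta_{r(g)}$ intertwines $\rho_x$ with $\rho_{r(g)}$, so $\rho_x(a)\delta_g \ne 0 \iff r(g) \in E_a$. Since $\G$ is countably covered by open bisections on which $r$ and $s$ restrict to homeomorphisms, meager subsets of $\Gu$ are preserved under $r^{-1}$ and $s$ (applied component-wise on the cover), giving $E_a$ meager $\iff F_a$ meager.

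For the second equality I first note that $D_0$ consists of dangerous points: given $x \in D_0$ with witnesses $g_k, U_k, U$, the set $U \cap \bigcup_k U_k$ is dense in $U$, so a net in it passing through a fixed $U_k$ converges to $x$ in $\Gu$ and simultaneously to $g_k \in \G^x_x \setminus \{x\}$ in $\G$ via the local homeomorphism $s|_{U_k}$. Thus $D_0$ is meager by~\cite{MR4246403}*{Lemma~7.15}, and consequently any $Y$ dense in $\Gu \setminus D_0$ is in fact dense in $\Gu$ (since every nonempty open $W \subset \Gu$ is non-meager in the Baire space $\Gu$, so $W \cap (\Gu \setminus D_0)$ is a nonempty open subset of $\Gu \setminus D_0$). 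Both inclusions then share a common construction: for $x \in \Gu \setminus D_0$ and any set $Z \subset \Gu \setminus \{x\}$ meeting every nonempty open subset of $\Gu$, the negation of the $D_0$-condition at $x$ yields, for every finite $F \subset \G^x_x \setminus \{x\}$ with chosen bisections $U_k \ni g_k$ for $g_k \in F$ and every neighbourhood $U$ of $x$, a nonempty open $W \subset U \setminus \bigcup_k U_k$ and hence a point $y_{F,U} \in Z \cap W$. The resulting net converges to $x$ with $x$ as sole accumulation point in $\G^x_x$, and Proposition~\ref{prop:weak-containment} delivers $\|\rho_x(a)\| \le \sup_{y \in Z} \|\rho_y(a)\|$.

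The inclusion $\bigcap_{y \in Y}\ker\rho_y \subset J_\sing$ then follows by applying this construction with $Z = Y$ whenever $x \notin Y$ (trivial otherwise): the supremum vanishes, so $\rho_x(a) = 0$ for every $x \in \Gu \setminus D_0$, and hence $F_a \subset D_0$ is meager. For the reverse inclusion, given $a \in J_\sing$ apply the construction with $Z = \Gu \setminus F_a$; since $F_a$ is meager its complement is comeager, hence dense in the Baire space $\Gu$, so $Z$ meets every nonempty open set. If some $x \in F_a$ were outside $D_0$, the construction would give $\rho_x(a) = 0$, contradicting $x \in F_a$; thus $F_a \subset D_0$, and since $Y \subset \Gu \setminus D_0$ this forces $F_a \cap Y = \emptyset$, i.e., $\rho_y(a) = 0$ for every $y \in Y$. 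The ``In particular'' claim is immediate: when $D_0 = \emptyset$, $F_a \subset D_0$ forces $a = 0$ by the definition of the reduced norm. The main subtlety is seeing that Proposition~\ref{prop:weak-containment} should be invoked for the reverse inclusion not with $Y$ but with the ad hoc set $\Gu \setminus F_a$, which upgrades the first-equality output ``$F_a$ meager'' to the sharper ``$F_a \subset D_0$'' needed to exclude $Y$.
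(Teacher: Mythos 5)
Your proof is correct, and it rests on the same two key ingredients as the paper's: the translation identity behind your formula $F_a=s(r^{-1}(E_a))$ together with the countable bisection cover for the first equality, and, for the second, Proposition~\ref{prop:weak-containment} applied to a net built from the failure of the $D_0$-condition plus the meagerness of the set $D$ of dangerous points from \cite{MR4246403}*{Lemma~7.15}. The organization of the second half is genuinely different, though. The paper first proves a ``local non-vanishing'' observation (if $x\notin D$ and $\rho_x(a)\ne0$, then $\rho_z(a)\ne0$ for all $z$ near $x$), uses it to conclude $\rho_x(a)=0$ off $D$ in both directions, and then treats $x\in D\setminus D_0$ separately by checking that Condition~\ref{condition1} fails for $Y=\Gu\setminus D$. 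You instead isolate a single construction --- for $x\notin D_0$ and any dense $Z$ with $x\notin Z$, the net $(y_{F,U})$ gives $\rho_x\prec\bigoplus_{y\in Z}\rho_y$ --- and instantiate it twice, with $Z=Y$ and with $Z=\Gu\setminus F_a$. This bypasses the observation and the set $D$ almost entirely (you only need $D_0\subset D$ to know that $D_0$ is meager) and yields the sharper intermediate conclusion $F_a\subset D_0$ in both directions, whereas the paper's route stays closer to the framework of \cite{MR4246403}. The substance is the same; your packaging is slightly more economical.
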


Before we turn to the proof, let us make the connection to~\cite{MR4246403} more explicit. Let $D\subset\Gu$ be the set of dangerous points~\cite{MR4246403}, that is, points $x\in\Gu$ such that there is a net in $\Gu$ converging to $x$ and to an element of $\Gxx\setminus\{x\}$. It is easy to see then that $D_0\subset D$. (Should the points of $D_0$ be called extremely dangerous?)

\bp[Proof of Proposition~\ref{prop:essential}]
Let $(U_n)_n$ be a sequence of open bisections covering $\G$. For every $n$, let $T_n\colon s(U_n)\to r(U_n)$ be the homeomorphism defined by $U_n$ and $g_n\colon s(U_n)\to U_n$ be the inverse of $s\colon U_n\to s(U_n)$, so $T_n(x)=r(g_n(x))$. Similarly to~\eqref{eq:ess1}, for all $a\in C^*_r(\G)$ and $x\in s(U_n)$, we have
$$
\|\rho_x(a)\delta_{g_n(x)}\|=\|\rho_{T_n(x)}(a)\delta_{T_n(x)}\|.
$$
It follows that if $a\in J_\sing$, then the set $A_n$ of points $x\in s(U_n)$ such that $\rho_x(a)\delta_{g_n(x)}\ne0$ is meager in~$\Gu$. Then the set $\cup_nA_n$ is meager as well. Since it coincides with the set of points $x\in\Gu$ such that $\rho_x(a)\ne0$, this proves the first equality of the proposition.

For the second equality, observe first that if $x\in\Gu\setminus D$ and $\rho_x(a)\ne0$ for some $a\in C^*_r(\G)$, then $\rho_z(a)\ne0$ for all $z$ close to $x$. This follows from~\cite{MR4246403}*{Lemma~7.15} or our Proposition~\ref{prop:weak-containment}, since otherwise we could find a net $(x_i)$ converging to $x$ such that $\rho_{x_i}(a)=0$ for all $i$ and then conclude that $\rho_x(a)=0$, as $\rho_x$ is weakly contained in $\bigoplus_i\rho_{x_i}$.

The observation implies that if $a\in \cap_{y\in Y}\ker\rho_y$, then $\rho_x(a)=0$ for all $x\in\bar Y\setminus D$. The set $D$ is meager by \cite{MR4246403}*{Lemma~7.15}. As $D_0\subset D$, it follows that $Y$ is dense in~$\Gu$. Therefore if $a\in \cap_{y\in Y}\ker\rho_y$, then $\rho_x(a)$ can be nonzero only for elements $x$ of the meager set $D$, hence $a\in J_\sing$.

Conversely, assume $a\in J_\sing$. Then the observation above implies that $\rho_x(a)=0$ for all $x\in\Gu\setminus D$. Therefore to finish the proof it suffices to show that $\rho_x(a)=0$ for all $x\in D\setminus D_0$. By Proposition~\ref{prop:weak-containment}, for this, in turn, it suffices to show that for every $x\in D\setminus D_0$ Condition~\ref{condition1} is not satisfied for $Y=\Gu\setminus D$. Assume this condition is satisfied for some $x\in D$, that is, there exist elements $g_1,\dots,g_n\in\Gxx\setminus\{x\}$, open bisections $U_1,\dots,U_n$ such that $g_k\in U_k$ and a neighbourhood~$U$ of~$x$ in $\Gu$ satisfying
$U\setminus D\subset U_1\cup\dots \cup U_n$. As the set $D$ is meager, this implies that $x\in D_0$.
\ep

\bigskip

\section{\texorpdfstring{C$^*$}{C*}-algebras associated with left cancellative monoids}\label{sec:monoid}

Let $S$ be a left cancellative monoid with identity element~$e$. Consider its left regular representation
$$
\lambda\colon S\to B(\ell^2(S)),\quad \lambda_s\delta_t=\delta_{st}.
$$
The reduced C$^*$-algebra $C^*_r(S)$ of $S$ is defined as the C$^*$-algebra generated by the operators $\lambda_s$, $s\in S$.

\smallskip

Consider the left inverse hull~$I_\ell(S)$ of~$S$, that is, the inverse semigroup of partial bijections on~$S$ generated by the left translations $S\to S$. Whenever convenient we view $S$ as a subset of $I_\ell(S)$ by identifying $s$ with the left translation by $s$. For $s\in S$, we denote by $s^{-1}\in I_\ell(S)$ the bijection $sS\to S$ inverse to the bijection $S\to sS$, $t\mapsto st$. If the map with the empty domain is present in $I_\ell(S)$, we denote it by $0$.

Let $E(S)$ be the abelian semigroup of idempotents in $I_\ell(S)$. Every element of $E(S)$ is the identity map on its domain of definition $X\subset S$, which is a right ideal in $S$ of the form
$$
X=s_1^{-1}t_1\dots s_n^{-1}t_nS
$$
for some $s_1,\dots,s_n,t_1,\dots,t_n\in S$. Such right ideals are called constructible~\cite{MR2900468}. We denote by $\J(S)$ the collection of all right constructible ideals. It is a semigroup under the operation of intersection, and we have an isomorphism $E(S)\cong\J(S)$. Denote by $p_X\in E(S)$ the idempotent corresponding to $X\in\J(S)$.

Denote by $\widehat{E(S)}$ the collection of semi-characters of $E(S)$, that is, semigroup homomorphisms $E(S)\to\{0,1\}$ that are not identically zero, where $\{0,1\}$ is considered as a semigroup under multiplication. Note that every semi-character $\chi\in \widehat{E(S)}$ must satisfy $\chi(p_S)=1$.  If $0\in I_\ell(S)$, then denote by $\chi_0$ the semi-character that is identically one. This is the unique semi-character satisfying $\chi_0(0)=1$. The set $\widehat{E(S)}$ is compact Hausdorff in the topology of pointwise convergence.

Consider the Paterson groupoid $\G(I_\ell(S))$ associated with $I_\ell(S)$~\cite{MR1724106}:
$$
\G(I_\ell(S))=\Sigma/{\sim_P},\quad \text{where}\quad \Sigma = \{(g,\chi) \in I_\ell(S)\times\widehat{E(S)} \mid \chi(g^{-1}g)=1\}
$$
and the equivalence relation $\sim_P$ is defined by declaring $(g_1,\chi_1)$ and $(g_2,\chi_2)$ to be equivalent if and only if
$$
 \chi_1=\chi_2\ \ \text{and there exists}\ \ p\in E(S)\ \ \text{such that}\ \ g_1p=g_2p\ \ \text{and}\ \ \chi_1(p)=1.
$$
We denote by $[g,\chi]$ the class of $(g,\chi)\in\Sigma$ in $\G(I_\ell(S))$. The product is defined by
$$
[g,\chi]\,[h,\psi] = [gh,\psi]\quad\text{if}\quad \chi=\psi(h^{-1}\cdot h).
$$
In particular, the unit space $\G(I_\ell(S))^{(0)}$ can be identified with $\widehat{E(S)}$ via the map $\widehat{E(S)}\to\G(I_\ell(S))$, $\chi\mapsto[p_S,\chi]$, the source and range maps are given by
$$
s([g,\chi])=\chi,\qquad r([g,\chi])=\chi(g^{-1}\cdot g),
$$
while the inverse is given by $[g,\chi]^{-1}=[g^{-1},\chi(g^{-1}\cdot g)]$.

For a subset $U$ of $\widehat{E(S)}$, define
$$
D(g,U)= \{[g,\chi]\in\G(I_\ell(S)) \mid \chi \in U\}.
$$
Then the topology on $\G(I_\ell(S))$ is defined by taking as a basis the sets $D(g,U)$,
where $g\in I_\ell(S)$ and~$U$ is an open subset of the clopen set $\{\chi \in \widehat{E(S)} \mid \chi(g^{-1}g)=1\}$.
This turns $\G(I_\ell(S))$ into a locally compact, but not necessarily Hausdorff, \'{e}tale groupoid.

\smallskip

For every $s\in S$ define a semi-character~$\chi_s\in\widehat{E(S)}$~by
$$
\chi_s(p_X)=\un_X(s).
$$
The following lemma is a groupoid version of the observation of Norling~\cite{MR3200323}*{Section~3} on a connection between the regular representations of $S$ and $I_\ell(S)$. A closely related result was also proved by Spielberg~\cite{MR4151331}*{Proposition~11.4}.

\begin{lemma}\label{lem:norling}
Put $\G=\G(I_\ell(S))$ and $Z=\Gu=\widehat{E(S)}$.
Then the map $S\to \G_{\chi_e}$, $s\mapsto[s,\chi_e]$, is a bijection. If we identify $S$ with $\G_{\chi_e}$ using this map, so that the representation $\rho_{\chi_e}$ of $C^*_r(\G)$ is viewed as a representation on~$\ell^2(S)$, then
$$
\rho_{\chi_e}(C^*_r(\G))=C^*_r(S)\quad\text{and}\quad\rho_{\chi_e}(\un_{D(s,Z)})=\lambda_s\quad\text{for all}\quad s\in S.
$$
\end{lemma}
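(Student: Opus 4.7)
The plan is to prove the three assertions in order: the bijection $s\mapsto[s,\chi_e]$, the identity $\rho_{\chi_e}(\un_{D(s,Z)})=\lambda_s$, and the equality of $C^*$-algebras. For the bijection, I first observe that $[s,\chi_e]$ is well-defined and lies in $\G_{\chi_e}$: left translation by $s$ has domain $S$, so $s^{-1}s=p_S$ and $\chi_e(s^{-1}s)=\chi_e(p_S)=1$. Injectivity follows from the equivalence relation: if $[s,\chi_e]=[t,\chi_e]$, there is $p_X\in E(S)$ with $sp_X=tp_X$ and $\chi_e(p_X)=\un_X(e)=1$, whence $e\in X$ and evaluating the partial bijections at $e$ gives $s=t$. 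Surjectivity is the main technical point: given $[g,\chi_e]\in\G_{\chi_e}$ with $s:=g(e)$, I must produce a constructible ideal $X\ni e$ contained in $\dom(g)$ on which $g$ agrees with left translation by $s$. I would prove this by induction on the length of a word for $g$ in the generators $S\cup S^{-1}$ of $I_\ell(S)$, in the strengthened form: for every $g\in I_\ell(S)$ and $r\in\dom(g)$ with $s:=g(r)$, there is a constructible ideal $X\ni e$ such that $ru\in\dom(g)$ and $g(ru)=su$ for all $u\in X$. The one-generator cases $g=a$ and $g=a^{-1}$ (with $ar=e$) are immediate from the definition of left translation, and the inductive step $g=g'g_n$ follows by intersecting the ideals produced by the hypothesis applied to $g_n$ at $r$ and to $g'$ at $t:=g_n(r)$.

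With the bijection $S\cong\G_{\chi_e}$ established, the identity $\rho_{\chi_e}(\un_{D(s,Z)})=\lambda_s$ reduces to direct computation. Since $s^{-1}s=p_S$ and $ss^{-1}=p_{sS}$, the set $D(s,Z)$ is a bisection whose range map identifies it with $V_{sS}=\{\chi:\chi(p_{sS})=1\}$; for $u=sv\in sS$, the unique $h\in D(s,Z)$ with $r(h)=\chi_u$ is $[s,\chi_v]$, verified by $\chi_v(p_{s^{-1}X})=\un_{s^{-1}X}(v)=\un_X(sv)=\chi_u(p_X)$. Substituting into the formula for $\rho_{\chi_e}$ and using $h^{-1}[u,\chi_e]=[s^{-1}u,\chi_e]=[v,\chi_e]$ (again by the first paragraph), the sum collapses to the single nonzero term $[v=t]$, giving $\rho_{\chi_e}(\un_{D(s,Z)})\delta_{[t,\chi_e]}=\delta_{[st,\chi_e]}$, which under the identification $\ell^2(\G_{\chi_e})\cong\ell^2(S)$ is $\lambda_s\delta_t$.

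The inclusion $C^*_r(S)\subset\rho_{\chi_e}(C^*_r(\G))$ is then immediate from the previous paragraph. For the reverse, I would argue that the $C^*$-subalgebra of $C^*_r(\G)$ generated by $\{\un_{D(s,Z)}:s\in S\}$ exhausts $C^*_r(\G)$. Since $I_\ell(S)$ is generated by $S\cup S^{-1}$, convolution products of $\un_{D(s,Z)}$'s and their adjoints produce $\un_{D(g,Z_g)}$ for every $g\in I_\ell(S)$; in particular $\un_{V_X}=\un_{D(g,Z_g)}^*\,\un_{D(g,Z_g)}$ for any word $g$ with $g^{-1}g=p_X$, so all characteristic functions of basic clopens $V_X\subset\widehat{E(S)}$ arise. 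Multiplying by these idempotents exhibits $\un_{D(g,U)}$ for any $g\in I_\ell(S)$ and clopen $U\subset Z_g$, and since $\widehat{E(S)}$ is totally disconnected these span $C_c(\G)$ uniformly, hence in the reduced norm (recall the two norms coincide on each open bisection). Applying $\rho_{\chi_e}$ therefore sends $C^*_r(\G)$ into the $C^*$-algebra generated by $\{\lambda_s:s\in S\}=C^*_r(S)$, completing the equality. The main obstacle throughout is the inductive argument in the first paragraph; the remaining steps amount to bookkeeping combined with the standard correspondence between generators of $I_\ell(S)$ and those of the groupoid $C^*$-algebra.
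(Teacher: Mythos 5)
Your proof is correct, and its overall skeleton (bijection, then the computation $\rho_{\chi_e}(\un_{D(s,Z)})=\lambda_s$, then generation) matches the paper's; the one place where you take a genuinely different route is surjectivity of $s\mapsto[s,\chi_e]$. The paper's argument is two lines: $\chi_e(g^{-1}g)=1$ means $e\in\dom g$, and since $\dom g$ is a right ideal this forces $\dom g=S$; then $g(t)=g(e)t$ for all $t\in S$ because every element of $I_\ell(S)$ commutes with right translations (check this on the generators $s$ and $s^{-1}$), so $g$ literally equals the left translation by $g(e)$ in $I_\ell(S)$, not merely an element equivalent to it at $\chi_e$. Your induction on word length re-derives this commutation property in a localized form; it works, but note that any constructible ideal containing $e$ is automatically all of $S$ (it is a right ideal), so the ideals $X\ni e$ your induction produces are all equal to $S$ and the intersection step is vacuous --- the induction is really just verifying $g(ru)=g(r)u$. (Also the parenthetical ``with $ar=e$'' in your base case $g=a^{-1}$ is garbled; what is needed there is $r\in aS$, say $r=av$, so that $s=v$ and $g(ru)=vu=su$.) For the generation step the paper simply cites Paterson's identification of $C^*_r(\G(I_\ell(S)))$ with the reduced C$^*$-algebra of the inverse semigroup $I_\ell(S)$, which is generated by $S$; your explicit argument --- products and adjoints of the $\un_{D(s,Z)}$ yield $\un_{D(g,\cdot)}$ for all $g\in I_\ell(S)$, hence the indicators of the basic clopen sets in $\widehat{E(S)}$, hence a sup-norm-dense (and therefore reduced-norm-dense, since the two norms agree on each bisection) subspace of $C_c(\G)$ --- is a correct and self-contained substitute. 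Both variations are sound; the paper's versions are shorter, yours avoid external citations.
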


\bp
Since $\chi_e(p_J)=0$ for every constructible ideal $J$ different from $S$, we have $[s,\chi_e]=[t,\chi_e]$ only if $s=t$. This shows that the map $S\to \G_{\chi_e}$, $s\mapsto[s,\chi_e]$, is injective. In order to prove that it is surjective, assume that $(g,\chi_e)\in\Sigma$, that is, $\chi_e(g^{-1}g)=1$, for some $g\in I_\ell(S)$.
This means that the domain of definition of $g$ contains $e$, and since this domain is a right ideal, it must coincide with~$S$. But then if $s\in S$ is the image of $e$ under the action of $g$, we must have $g(t)=g(e)t=st$ for all $t\in S$, so $g=s$, which proves the surjectivity.

Next, as $\chi_e(t^{-1}\cdot t)=\chi_t$ and $[s,\chi_t]\,[t,\chi_e]=[st,\chi_e]$, we immediately get that
$\rho_{\chi_e}(\un_{D(s,Z)})=\lambda_s$ for all $s\in S$. It is not difficult to see that the C$^*$-algebra $C^*_r(\G)$ is generated by the elements~$\un_{D(s,Z)}$. (One can also refer to \cite{MR1724106}*{Theorem 4.4.2} that shows that the C$^*$-algebra $C^*_r(\G)$ is the reduced C$^*$-algebra of the inverse semigroup $I_\ell(S)$, which is generated by $S$.) Hence $\rho_{\chi_e}(C^*_r(\G))$ is exactly $C^*_r(S)$.
\ep

This lemma leads naturally to a candidate for a groupoid model for $C^*_r(S)$: define
$$
\G_P(S):=\G(I_\ell(S))_{\Omega(S)},
$$
where $\Omega(S)\subset \G(I_\ell(S))^{(0)}$ is the closure of the $\G(I_\ell(S))$-orbit of $\chi_e$. As $\chi_e(s^{-1}\cdot s)=\chi_s$, by Lemma~\ref{lem:norling} this orbit is exactly the set of semi-characters $\chi_s$, $s\in S$. The closure of this set is known and easy to find, cf.~\cite{MR3618901}*{Corollary~5.6.26}: the set $\Omega(S)=\overline{\{\chi_s\mid s\in S\}}\subset\widehat{E(S)}$ consists of the semi-characters $\chi$ satisfying the properties
\begin{enumerate}
\item[(i)] if $0\in I_\ell(S)$, then $\chi\ne\chi_0$ (equivalently, $\chi(0)=0$);
\item[(ii)] if $\chi(p_X)=1$ and $X=X_1\cup\dots\cup X_n$ for some $X,X_1,\dots,X_n\in\J(S)$, then $\chi(p_{X_i})=1$ for at least one index $i$.
\end{enumerate}
The groupoid $\G_P(S)$ is denoted by $I_l\ltimes\Omega$ in~\cite{Li-I}.

\smallskip

We are now in the setting of Section~\ref{sec:groupoid}, with $\G=\G_P(S)$ and $Y=\{\chi_s\mid s\in S\}$ a dense invariant subset of $\Gu$. Negation of Condition~\ref{condition1} leads to the following definition.

\begin{definition}\label{def:strong-regularity}
We say that $S$ is \textbf{strongly C$^*$-regular} if, given elements $h_1,\dots,h_n\in I_\ell(S)$ and constructible ideals $X,X_1,\dots,X_m\in\J(S)$ satisfying
\begin{equation}\label{eq:strong-regularity0}
\emptyset\ne X\setminus\bigcup^m_{i=1} X_i\subset \bigcup^n_{k=1}\{s\in S: h_ks=s\},
\end{equation}
there are constructible ideals $Y_1,\dots, Y_l\in \J(S)$ and indices $1\le k_j\le n$ ($j=1,\dots,l$) such that
\begin{equation}\label{eq:strong-regularity}
X\setminus\bigcup^m_{i=1} X_i\subset\bigcup^l_{j=1}Y_j\qquad\text{and}\qquad h_{k_j}p_{Y_j}=p_{Y_j}\quad\text{for all}\quad 1\le j\le l.
\end{equation}
\end{definition}

\begin{lemma}\label{lem:regularity}
Condition~\ref{condition1} is not satisfied for $\G=\G_P(S)$, $Y=\{\chi_s\mid s\in S\}$
and every $x\in\G^{(0)}\setminus Y$ if and only if $S$ is strongly C$^*$-regular.
\end{lemma}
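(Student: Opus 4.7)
The plan is to express Condition~\ref{condition1} at $\chi\in\Omega(S)\setminus Y$ directly in terms of algebraic data on $S$ and match it against Definition~\ref{def:strong-regularity}. The dictionary has three ingredients: (a)~basic open neighborhoods of $\chi\in\Omega(S)$ take the form $U=\{\psi:\psi(p_X)=1,\ \psi(p_{X_i})=0\text{ for }i=1,\ldots,m\}$ with $\chi(p_X)=1$ and $\chi(p_{X_i})=0$; (b)~every open bisection containing $[h,\chi]\in\G$ contains some $D(h,V)$ with $\chi\in V$; and (c)~since each $h\in I_\ell(S)$ is right $S$-equivariant, $hs=s$ forces $sS\subset\dom h$ with $hp_{sS}=p_{sS}$, so $\{s\in S:hs=s\}=\bigcup\{Z\in\J(S):hp_Z=p_Z\}$ and, for $s\in S$, $\chi_s\in D(h,V)\subset\G$ iff $\chi_s\in V$ and $hs=s$. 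After shrinking $U$ so that $U\subset V_k$ for all $k$ (possible because $\chi(p_{\dom h_k})=1$ in the isotropy case), the statement $Y\cap U\subset\bigcup_k U_k$ with $U_k=D(h_k,V_k)$ translates to $X\setminus\bigcup_iX_i\subset\bigcup_k\{s:h_ks=s\}$.

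For the forward direction, suppose $\chi\in\Omega(S)\setminus Y$ realises Condition~\ref{condition1} with data $g_k=[h_k,\chi]\in\Gxx\setminus\{\chi\}$, $U_k=D(h_k,V_k)$, and $U$ as above. The dictionary yields $\emptyset\ne X\setminus\bigcup_iX_i\subset\bigcup_k\{s:h_ks=s\}$, non-empty since $\chi\in\overline Y$. Strong C$^*$-regularity then supplies constructible $Y_j$ and indices $k_j$ with $X\setminus\bigcup_iX_i\subset\bigcup_jY_j$ and $h_{k_j}p_{Y_j}=p_{Y_j}$. Decomposing $X=\bigcup_i(X\cap X_i)\cup\bigcup_j(X\cap Y_j)$ as a constructible cover of $X$ and invoking property~(ii) of $\Omega(S)$, the pieces $X\cap X_i$ are killed by $\chi(p_{X_i})=0$, so $\chi(p_{Y_{j_0}})=1$ for some $j_0$. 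Combined with $h_{k_{j_0}}p_{Y_{j_0}}=p_{Y_{j_0}}$ this forces $[h_{k_{j_0}},\chi]=\chi$, contradicting $g_{k_{j_0}}\ne\chi$.

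For the backward direction, take witness data $h_1,\ldots,h_n\in I_\ell(S)$ and $X,X_1,\ldots,X_m\in\J(S)$ for a failure of strong C$^*$-regularity, and set $\F_k=\{Z\in\J(S):h_kp_Z=p_Z\}$. For each finite $\F_k'\subset\F_k$, the failure supplies $s\in X\setminus(\bigcup_iX_i\cup\bigcup_k\bigcup_{Z\in\F_k'}Z)$, so $\chi_s$ lies in the clopen subset $\{\psi\in\Omega(S):\psi(p_X)=1,\ \psi(p_{X_i})=0,\ \psi(p_Z)=0\text{ for }Z\in\F_k',\ k=1,\ldots,n\}$. These clopen sets form a family with the finite intersection property, so compactness of $\Omega(S)$ produces $\chi\in\Omega(S)$ with $\chi(p_X)=1$, $\chi(p_{X_i})=0$ and $\chi(p_Z)=0$ for every $Z\in\F_k$ and every $k$. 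Standard checks show $\chi\notin Y$ (else $\chi=\chi_s$ with $s\in X\setminus\bigcup_iX_i$ would force $1=\chi_s(p_{sS})=\chi(p_{sS})=0$, using $sS\in\F_k$ whenever $h_ks=s$) and $[h_k,\chi]\ne\chi$ for each $k$ (any witnessing $p=p_Z$ lies in $\F_k$).

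The main obstacle is ensuring $g_k:=[h_k,\chi]$ actually lies in $\Gxx$, which demands both $\chi(p_{\dom h_k})=1$ and the range condition $\chi\circ(h_k^{-1}\cdot h_k)=\chi$. The domain condition is handled by applying property~(ii) of $\Omega(S)$ to the constructible cover $X=\bigcup_i(X\cap X_i)\cup\bigcup_k(X\cap\dom h_k)$ (legitimate because $\{s:h_ks=s\}\subset\dom h_k$): this yields a non-empty $K\subset\{1,\ldots,n\}$ with $\chi(p_{\dom h_k})=1$ for $k\in K$, and adjoining the clopen constraints $\psi(p_{\dom h_k})=0$ for $k\notin K$ and $\psi(p_{\dom h_k})=1$ for $k\in K$ to $U$ restricts the cover to indices in $K$. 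For the range condition, I would realise $\chi$ as the limit of a net $(\chi_{s_\alpha})$, and via pigeonhole over the finitely many possible pairs (fixing set, domain set) of $s_\alpha$ in $\{1,\ldots,n\}$ extract a cofinal subnet along which every $s_\alpha$ is simultaneously fixed by each $h_k$ with $k\in K$; the identity $\chi_{s_\alpha}(h_k^{-1}qh_k)=\chi_{h_ks_\alpha}(q)=\chi_{s_\alpha}(q)$ then passes to the limit, giving $r([h_k,\chi])=\chi$ for $k\in K$. Taking $U_k=D(h_k,U)$ finally realises Condition~\ref{condition1}. The delicate step is coordinating $K$ and the subnet so that the FIP, the cover, and the domain and range conditions all survive at a common~$\chi$.
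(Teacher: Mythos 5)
Your forward direction is correct and coincides with the paper's argument: translate Condition~\ref{condition1} via basic neighbourhoods and the identification $\chi_s\in D(h_k,\cdot)\Leftrightarrow h_ks=s$, apply strong C$^*$-regularity, and use property~(ii) of $\Omega(S)$ on the cover $X=\bigcup_i(X\cap X_i)\cup\bigcup_j(X\cap Y_j)$ to force $\chi(p_{Y_{j_0}})=1$ and hence $[h_{k_{j_0}},\chi]=\chi$. In the backward direction, your FIP construction of $\chi$ is equivalent to the paper's cluster-point construction, and your checks that $\chi\notin Y$, that $[h_k,\chi]\ne\chi$, and the treatment of the domain condition via property~(ii) all match the paper.

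The gap is in your treatment of the range condition. The pigeonhole over pairs (fixing set, domain set) gives a cofinal subnet along which the fixing set is a \emph{constant nonempty} subset $F_0\subset K$, but nothing forces $F_0\supset K$: it is perfectly possible that some $h_k$ with $\chi(p_{\dom h_k})=1$ fixes none of the $s_\alpha$ along any cofinal subnet (e.g.\ if $h_k$ only fixes a part of $X\setminus\bigcup_iX_i$ that the net never visits), and for such $k$ the identity $\chi_{s_\alpha}(h_k^{-1}qh_k)=\chi_{s_\alpha}(q)$ is simply false, so you cannot conclude $r([h_k,\chi])=\chi$ for all $k\in K$. If instead you retreat to the index set $F_0$, you get the range condition for those $k$, but you lose the covering condition: there may be points $\chi_s$ arbitrarily close to $\chi$ that are fixed only by $h_k$'s with $k\in K\setminus F_0$, and your argument gives no neighbourhood $U$ of $\chi$ with $Y\cap U\subset\bigcup_{k\in F_0}D(h_k,U)$. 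The missing idea (which is how the paper proceeds) is a dichotomy at $\chi$: if $(h_k,\chi)\in\Sigma$ but $\chi(h_k^{-1}\cdot h_k)\ne\chi$, then since $\widehat{E(S)}$ is Hausdorff and $\psi\mapsto\psi(h_k^{-1}\cdot h_k)$ is continuous on the clopen set where it is defined, we have $\psi(h_k^{-1}\cdot h_k)\ne\psi$ on a whole basic neighbourhood of $\chi$; since $h_ks=s$ forces $\chi_s(h_k^{-1}\cdot h_k)=\chi_{h_ks}=\chi_s$, such an $h_k$ fixes no $s$ with $\chi_s$ in that neighbourhood, so it can be discarded and $U$ shrunk without breaking the covering condition. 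After discarding all such $k$ (and those with $\chi(p_{\dom h_k})=0$), the remaining family is nonempty because the final $U$ is a nonempty open subset of $\Omega(S)=\overline{Y}$ and every $\chi_s\in U$ must be covered; the remaining $[h_k,\chi]$ lie in $\G^\chi_\chi$ and are nontrivial by your $\F_k$-argument. With this replacement your proof closes.
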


\bp
Assume first that $S$ is strongly C$^*$-regular. Suppose there is $\chi\in \G^{(0)}\setminus Y$ such that Condition~\ref{condition1} is satisfied for $x=\chi$, and let $g_k= [h_k,\chi]$, $U_k$ ($1\le k\le n$) and $U$ be as in that condition. We may assume that $U_k=D(h_k,\Omega(S))$ and
\begin{equation}\label{eq:U-neighbourhood}
U=\{\eta\in\Omega(S)\mid \eta(p_X)=1,\ \eta(p_{X_i})=0\ \text{for}\ i=1,\dots,m\}
\end{equation}
for some $X,X_1,\dots,X_m\in\J(S)$. Then Condition~\ref{condition1} says that for every $s\in X\setminus\cup^m_{i=1} X_i$ there is~$k$ such that $\chi_s\in U_k$, that is, $h_ks=s$. By the strong C$^*$-regularity we can find $Y_1,\dots,Y_l\in\J(S)$ satisfying~\eqref{eq:strong-regularity}. As $\chi\in\Omega(S)$, there must exist $j$ such that $\chi(p_{Y_j})=1$. But then $g_{k_j}=[h_{k_j},\chi]=\chi$, which contradicts the assumption that $g_1,\dots,g_n$ are nontrivial elements of the isotropy group~$\G^\chi_\chi$.

\smallskip

Assume now that $S$ is not strongly C$^*$-regular, so there are elements $h_1,\dots,h_n\in I_\ell(S)$ and constructible ideals $X,X_1,\dots,X_m\in\J(S)$ such that \eqref{eq:strong-regularity0} holds but~\eqref{eq:strong-regularity} doesn't for any choice of~$Y_j$ and~$k_j$. In other words, if we consider the set $\F$ of all constructible ideals $J$ such that there is $k$ (depending on $J$) satisfying $h_kp_J=p_J$, then for any finite set $F\subset\F$ we have
$$
X\setminus\Big(\bigcup^m_{i=1} X_i\cup\bigcup_{J\in F}J \Big)\ne\emptyset.
$$
Pick a point $s_F$ in the above set and consider a cluster point $\chi$ of the net $(\chi_{s_F})_F$, where $F$'s are partially ordered by inclusion. Then $\chi$ lies in the set $U$ defined by~\eqref{eq:U-neighbourhood}, and $\chi(p_J)=0$ for all $J\in\F$. The semi-character $\chi$ cannot be of the form $\chi_s$, since otherwise we must have $s\in X\setminus\cup^m_{i=1} X_i$, and then $sS\in\F$ and $\chi(p_{sS})=\chi_s(p_{sS})=1$, which is a contradiction.

We claim that it is possible to replace $U$ by a smaller neighbourhood of $\chi$ and discard some of the elements $h_k$ in such a way that Condition~\ref{condition1} gets satisfied for $x=\chi$, $g_k=[h_k,\chi]$ and $U_k=D(h_k,\Omega(S))$. Namely, if $(h_k,\chi)\not\in\Sigma$ for some $k$, then we add $\dom h_k$ to the collection $\{X_1,\dots,X_m\}$ and discard such $h_k$. If $(h_k,\chi)\in\Sigma$ but $\chi(h_k^{-1}\cdot h_k)\ne\chi$, then $\chi_s(h_k^{-1}\cdot h_k)\ne\chi_s$ for all $\chi_s$ close $\chi$, so by replacing $X$ by a smaller ideal and adding more constructible ideals to $\{X_1,\dots,X_m\}$ we may assume that $\chi_s(h_k^{-1}\cdot h_k)\ne\chi_s$ for all $\chi_s\in U$ and again discard such $h_k$. For the remaining elements $h_k$ and the new $U$ we have that for every $s\in S$ such that $\chi_s\in U$ there is an index $k$ satisfying $h_ks=s$. Then, in order to show that Condition~\ref{condition1} is satisfied, it remains to check that the elements $g_k=[h_k,\chi]$ of $\G^\chi_\chi$ are nontrivial. But this is clearly true, since $\chi(p_J)=0$ for every $J\in\J(S)$ such that $h_kp_J=p_J$.
\ep

\begin{remark}\label{rem:strong-regularity}
From the last part of the proof we see that in Definition~\ref{def:strong-regularity} we may assume in addition that $X\subset\dom h_k$ for all $k$. More directly this can be seen as follows. Assume~\eqref{eq:strong-regularity0} is satisfied. Consider the nonempty subsets $F\subset\{1,\dots, n\}$ such that
$$
X_F:=X\cap\Big(\bigcap_{k\in F}\dom h_k\Big)\not\subset\bigcup^m_{i=1}X_i\cup\bigcup_{k\notin F}\dom h_k.
$$
Then~\eqref{eq:strong-regularity0} is satisfied for $X_F$, $\{X_1,\dots,X_m,\dom h_k\ (k\notin F)\}$ and $\{h_k\ (k\in F)\}$ in place of  $X$, $\{X_1,\dots,X_m\}$ and $\{h_k\ (1\le k\le n)\}$. Since
$$
X\setminus\bigcup^m_{i=1} X_i\subset\bigcup_F \Bigg(X_F\setminus\Big(\bigcup^m_{i=1}X_i\cup\bigcup_{k\notin F}\dom h_k\Big)\Bigg),
$$
we conclude that if for every $F$ condition~\eqref{eq:strong-regularity} can be satisfied for  $X_F$, $\{X_1,\dots,X_m,\dom h_k\ (k\notin F)\}$ and $\{h_k\ (k\in F)\}$, then it can be satisfied for $X$, $\{X_1,\dots,X_m\}$ and $\{h_k\ (1\le k\le n)\}$ as well.
\er
\end{remark}

Since the points $\chi_s$, $s\in S$, lie on the same $\G_P(S)$-orbit, the corresponding representations $\rho_{\chi_s}$ of $C^*_r(\G_P(S))$ are mutually equivalent. Thus, by Lemma~\ref{lem:norling} and Proposition~\ref{prop:weak-containment}, we get the following result.

\begin{proposition}\label{prop:strong-regularity}
If $S$ is a strongly C$^*$-regular left cancellative monoid, then the representation $\rho_{\chi_e}$ of $C^*_r(\G_P(S))$ defines an isomorphism
$C^*_r(\G_P(S))\cong C^*_r(S)$.
\end{proposition}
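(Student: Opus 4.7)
The plan is to show that $\rho_{\chi_e}$ is an isometric $*$-homomorphism from $C^*_r(\G_P(S))$ onto $C^*_r(S)$. Surjectivity is already given by Lemma~\ref{lem:norling}, so the real content is injectivity (equivalently, isometry) of $\rho_{\chi_e}$.

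First, since $\chi_e(s^{-1}\cdot s)=\chi_s$ shows that all points of $Y=\{\chi_s\mid s\in S\}$ lie on a single $\G_P(S)$-orbit, the representations $\rho_{\chi_s}$ are all unitarily equivalent to $\rho_{\chi_e}$. So to prove that $\rho_{\chi_e}$ is isometric on $C^*_r(\G_P(S))$, it suffices to show that for every $x\in\Gu=\Omega(S)$ the representation $\rho_x$ is weakly contained in $\bigoplus_{y\in Y}\rho_y$, because then $\|\rho_{\chi_e}(f)\|=\sup_{y\in Y}\|\rho_y(f)\|\ge\|\rho_x(f)\|$ for every $x$, and passing to the supremum over $x$ gives $\|\rho_{\chi_e}(f)\|=\|f\|_r$.

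For $x\in Y$ the weak containment is immediate. For $x\in\Gu\setminus Y$, by construction $\Omega(S)=\overline{Y}$, so $x$ lies in the closure of the $\G$-invariant set $Y$. The hypothesis of strong C$^*$-regularity combined with Lemma~\ref{lem:regularity} tells us that Condition~\ref{condition1} fails for every such $x$. As explained in the paragraph following Condition~\ref{condition1}, failure of that condition at a point $x\in\overline{Y}\setminus Y$ is exactly the statement that there exists a net in $Y$ converging to $x$ and having no other accumulation points in $\Gxx$. We may therefore apply Proposition~\ref{prop:weak-containment} to conclude that $\rho_x$ is weakly contained in $\bigoplus_{y\in Y}\rho_y$, as required.

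Putting the pieces together, $\rho_{\chi_e}$ is an isometric $*$-homomorphism from $C^*_r(\G_P(S))$ into $B(\ell^2(S))$, and by Lemma~\ref{lem:norling} its image is precisely $C^*_r(S)$. No step appears genuinely difficult: the entire argument is a bookkeeping reduction of the desired isomorphism statement to the (already established) equivalence between strong C$^*$-regularity and failure of Condition~\ref{condition1}, plus the weak-containment criterion of Proposition~\ref{prop:weak-containment}. The only point that requires a moment of care is the observation that the $\G_P(S)$-invariance of $Y$ makes all the $\rho_y$ for $y\in Y$ equivalent to $\rho_{\chi_e}$, which is what lets us replace $\bigoplus_{y\in Y}\rho_y$ by $\rho_{\chi_e}$ on the level of norms.
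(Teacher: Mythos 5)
Your proposal is correct and is essentially the paper's own argument: the paper proves this proposition in one sentence by noting that all $\rho_{\chi_s}$ are mutually equivalent and invoking Lemma~\ref{lem:norling}, Lemma~\ref{lem:regularity} (negation of Condition~\ref{condition1}) and Proposition~\ref{prop:weak-containment}, exactly the chain of reductions you spell out. The only difference is that you make the bookkeeping explicit.
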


Therefore if $S$ is strongly C$^*$-regular, it is natural to define the full semigroup C$^*$-algebra of $S$ by
$$
C^*(S):=C^*(\G_P(S)).
$$

As $C^*(\G(I_\ell(S)))$ has a known description in terms of generators and relations~\cite{MR1724106}, we can quickly obtain such a description for $C^*(\G_P(S))$ as well.

\begin{proposition}[cf.~{\cite{MR4151331}*{Theorem~9.4},\cite{LS}*{Definition~3.6}}]
Assume $S$ is a countable left cancellative monoid. Consider the elements $v_s=\un_{D(s,\Omega(S))}\in C^*(\G_P(S))$, $s\in S$. Then $C^*(\G_P(S))$ is a universal unital C$^*$-algebra generated by the elements $v_s$, $s\in S$, satisfying the following relations:
\begin{enumerate}
  \item[(R1)] $v_e=1$;
  \item[(R2)] for every $g=s_1^{-1}t_1\dots s_n^{-1}t_n\in I_\ell(S)$, the element $v_g:=v_{s_1}^*v_{t_1}\dots v_{s_n}^*v_{t_n}$ is independent of the presentation of $g$;
  \item[(R3)] if $0\in I_\ell(S)$, then $v_0=0$;
  \item[(R4)] if $X=X_1\cup\dots\cup X_n$ for some $X,X_1,\dots,X_n\in\J(S)$, then
$$
\prod^n_{i=1}(v_{p_X}-v_{p_{X_i}})=0.
$$
\end{enumerate}
\end{proposition}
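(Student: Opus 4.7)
The strategy is to realize $C^*(\G_P(S))$ as a quotient of $C^*(\G(I_\ell(S)))$ via the corollary of Proposition~\ref{prop:exact-sequence-groupoid}, and then combine this with Paterson's universal description of $C^*(\G(I_\ell(S)))$.

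By Paterson's theorem (\cite{MR1724106}*{Theorem~4.4.2}), $C^*(\G(I_\ell(S)))$ is the universal C$^*$-algebra of the inverse semigroup $I_\ell(S)$; since $S$ generates $I_\ell(S)$ as an inverse semigroup with zero, this is equivalent to saying it is the universal unital C$^*$-algebra on generators $v_s$, $s\in S$, subject to R1--R3 taken with $Z:=\widehat{E(S)}$ in place of $\Omega(S)$. The corollary of Proposition~\ref{prop:exact-sequence-groupoid}, applied to $\G=\G(I_\ell(S))$ and the closed invariant subset $\Omega(S)\subset Z$, produces a surjection $q\colon C^*(\G(I_\ell(S)))\to C^*(\G_P(S))$ with $q(\un_{D(s,Z)})=v_s$, whose kernel $I$ is, by Proposition~\ref{prop:exact-sequence-groupoid}(3), the closed ideal generated by $C_0(Z\setminus\Omega(S))$. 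Pushing R1--R3 through $q$ shows they hold in $C^*(\G_P(S))$; R4 holds because, on $\Omega(S)$, the product $\prod_i(v_{p_X}-v_{p_{X_i}})$ is the indicator of $\{\chi\in\Omega(S):\chi(p_X)=1,\ \chi(p_{X_i})=0\ \forall i\}$, which is empty by property~(ii) in the definition of $\Omega(S)$.

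For the universal property, suppose $A$ is a unital C$^*$-algebra with elements $v_s$ satisfying R1--R4. Paterson's theorem applied to R1--R3 yields a unital $*$-homomorphism $\pi\colon C^*(\G(I_\ell(S)))\to A$ with $\pi(\un_{D(s,Z)})=v_s$, and it remains to verify $I\subset\ker\pi$. The crux is to identify $I$ with the closed ideal $J\subset C^*(\G(I_\ell(S)))$ generated by the R4 relators $q_{X;X_\bullet}:=\prod_i(v_{p_X}-v_{p_{X_i}})$, for then $\pi(J)=0$ by R4 completes the proof. Both $I$ and $J$ are generated by elements of the abelian subalgebra of $C^*(\G(I_\ell(S)))$ spanned by the projections $v_{p_X}$, $X\in\J(S)$; its Gelfand spectrum is $Z$ (respectively $Z\setminus\{\chi_0\}$ when $0\in I_\ell(S)$, since R3 is built into Paterson's presentation), and by Gelfand duality the problem reduces to showing that the joint zero locus of the $q_{X;X_\bullet}$ in this spectrum equals $\Omega(S)$. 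The inclusion $\Omega(S)\subset\{\text{zeros}\}$ is property~(ii); conversely, any $\chi$ in the spectrum that fails~(ii) exhibits a decomposition $X=X_1\cup\dots\cup X_n$ at which the corresponding relator takes the value~$1$, and property~(i) is automatic in the spectrum.

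The main obstacle is this Gelfand-duality identification $J\cap C_0(\text{spectrum})=C_0(\text{spectrum}\setminus\Omega(S))$; once it is in hand, the remaining verifications are formal. The countability hypothesis is used only to ensure that the universal C$^*$-algebra defined by R1--R4 has a well-defined (bounded) universal norm.
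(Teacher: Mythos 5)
Your proposal is correct and follows essentially the same route as the paper's proof: identify $C^*(\G(I_\ell(S)))$ with Paterson's universal inverse semigroup C$^*$-algebra via (R1)--(R2), use Proposition~\ref{prop:exact-sequence-groupoid} to see that the kernel of the quotient onto $C^*(\G_P(S))=C^*(\G(I_\ell(S))_{\Omega(S)})$ is the closed ideal generated by $C_0(\widehat{E(S)}\setminus\Omega(S))$, and then work inside the commutative subalgebra generated by the projections $v_{p_X}$ (whose spectrum is $\widehat{E(S)}$) to match that ideal with the one generated by the relators (R3)--(R4), using that the joint zero locus of those relators is cut out exactly by properties (i) and (ii) defining $\Omega(S)$. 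The only cosmetic differences are that you fold (R3) into the Paterson stage rather than into the reduction, and that the paper attributes the countability hypothesis merely to the formal applicability of Paterson's results rather than to boundedness of the universal norm.
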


Note that relation (R4) is unambiguous, since relation (R2) implies that the elements $v_{p_X}$, $X\in\J(S)$, are mutually commuting projections.

\bp
Consider a universal unital C$^*$-algebra with generators $v_s$, $s\in S$, satisfying relations (R1) and (R2). This is nothing else than the full C$^*$-algebra $C^*(I_\ell(S))$ of the inverse semigroup $I_\ell(S)$, which is by definition generated by elements $v_g$, $g\in I_\ell(S)$, satisfying the relations
$$
v_gv_h=v_{gh},\qquad v_g^*=v_{g^{-1}}.
$$
By~\cite{MR1724106}*{Theorem 4.4.1}, we can identify $C^*(I_\ell(S))$ with $C^*(\G(I_\ell(S)))$. As $\G_P(S)=\G(I_\ell(S))_{\Omega(S)}$, it follows that in order to prove the proposition it remains to show that relations (R3) and (R4) describe the quotient $C^*(\G(I_\ell(S))_{\Omega(S)})$ of $C^*(\G(I_\ell(S)))$. By Proposition~\ref{prop:exact-sequence-groupoid}, the kernel of the map $C^*(\G(I_\ell(S)))\to C^*(\G(I_\ell(S))_{\Omega(S)})$ is generated as a closed ideal by the functions $$f\in C(\G(I_\ell(S))^{(0)})=C(\widehat{E(S)})$$ vanishing on $\Omega(S)$. The C$^*$-algebra $C(\widehat{E(S)})$ is a universal C$^*$-algebra generated by the projections $e_X:=\un_{U_X}$, $X\in\J(S)$, where $U_X=\{\eta\in\widehat{E(S)}:\eta(p_X)=1\}$, satisfying the relations
$e_Xe_Y=e_{X\cap Y}$. By the definition of $\Omega(S)$, relations (R3) and (R4), with $e_X$ instead of $v_{p_X}$, describe the quotient $C(\Omega(S))$ of $C(\widehat{E(S)})$. This gives the result.
\ep

\begin{remark}
The assumption of countability of $S$ is certainly not needed in the above proposition, we added it to be able to formally apply results of~\cite{MR1724106}.
\end{remark}

\begin{remark}
Relation (R2) can be slightly relaxed, cf.~\cite{LS}*{Definition~3.6}: it suffices to require that the elements $v_g$ are well-defined only for $g=p_X$, $X\in\J(S)$. Indeed, then, given $g=a_1^{-1}b_1\dots a_n^{-1}b_n=c_1^{-1}d_1\dots c_m^{-1}d_m\in I_\ell(S)$, for the elements $v=v_{a_1}^*v_{b_1}\dots v_{a_n}^*v_{b_n}$ and $w=v_{c_1}^*v_{d_1}\dots v_{c_m}^*v_{d_m}$ we have $v^*v=v^*w=w^*v=w^*w$, hence $(v-w)^*(v-w)=0$ and $v=w$.
\er
\end{remark}

Let us next give a few sufficient conditions for strong C$^*$-regularity. Recall that a left cancellative monoid $S$ is called finitely aligned~\cite{MR4151331}, or right (ideal) Howson~\cite{ES}, if the right ideal $sS\cap tS$ is finitely generated for all $s,t\in S$. If $S$ is finitely aligned, then by induction on $n$ one can see that the constructible ideals $s_1^{-1}t_1\dots s_n^{-1}t_nS$ are finitely generated.

\begin{proposition}\label{prop:strong-regularity-suff}
A left cancellative monoid $S$ is strongly C$^*$-regular if either of the following conditions is satisfied:
\begin{enumerate}
  \item the groupoid $\G_P(S)$ is Hausdorff;
  \item the monoid $S$ is group embeddable;
  \item the monoid $S$ is finitely aligned.
\end{enumerate}
\end{proposition}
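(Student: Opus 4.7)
The plan is to verify each condition separately. Case (1) reduces via Lemma~\ref{lem:regularity} to showing Condition~\ref{condition1} cannot hold, while cases (2) and (3) I would verify directly from Definition~\ref{def:strong-regularity}.

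For (1), the key topological fact I would invoke is that in a Hausdorff \'etale groupoid the unit space $\G_P(S)^{(0)}=\Omega(S)$ is closed (it is always open when $\G$ is \'etale), hence clopen. Given $x\in\Omega(S)\setminus Y$ and any $g\in\G^x_x\setminus\{x\}\subset\G_P(S)\setminus\Omega(S)$, I would shrink any open bisection about $g$ by intersecting with the open complement of $\Omega(S)$ to obtain an open bisection $U_k$ disjoint from $Y\subset\Omega(S)$. Then the inclusion $Y\cap U\subset U_1\cup\dots\cup U_n$ in Condition~\ref{condition1} would force $Y\cap U=\emptyset$, contradicting density of $Y$ in $\Omega(S)$. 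Hence Condition~\ref{condition1} fails for every $x\in\G_P(S)^{(0)}\setminus Y$.

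For (2), assuming $S\hookrightarrow G$ for some group $G$, the crucial observation I would use is that every $h\in I_\ell(S)$ acts on $\dom h$ as multiplication by a fixed element of $G$, so $h$ fixing a single element of its domain forces $h=\id_{\dom h}$. Given data satisfying~\eqref{eq:strong-regularity0}, I would let $K=\{k:h_ks=s\ \text{for some}\ s\in X\setminus\cup X_i\}$ and take the $Y_j$'s to be the constructible ideals $\dom h_k$ ($k\in K$). The hypothesis yields $X\setminus\cup X_i\subset\bigcup_{k\in K}\dom h_k$, and for $k\in K$ the above observation gives $h_k p_{\dom h_k}=p_{\dom h_k}$.

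For (3), I would exploit that finite alignment makes every constructible ideal finitely generated, so $X=u_1S\cup\dots\cup u_pS$ for some $u_i\in S$. For any $s\in X\setminus\cup X_j$ of the form $s=u_it$, the fact that each $X_j$ is a right ideal forces $u_i\notin\cup X_j$, hence $u_i\in X\setminus\cup X_j$, so by hypothesis some $h_k$ fixes $u_i$. The main algebraic point—which I view as the crux of this case—is that every element of $I_\ell(S)$ commutes with right multiplication on its domain, being a composition of left translations and their inverses. This promotes $h_ku_i=u_i$ to $h_k(u_it')=u_it'$ for all $t'\in S$, yielding $h_k p_{u_iS}=p_{u_iS}$. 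The principal right ideals $u_iS$ with $u_i\notin\cup X_j$ are constructible, cover $X\setminus\cup X_j$, and each is fixed by some $h_{k_i}$, so they serve as the required $Y_j$'s. The subtlety to watch for in (3) is that only finite generation of $X$ itself—not of the $X_j$'s—is actually needed for the argument to go through.
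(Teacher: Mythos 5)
Your treatments of (2) and (3) are correct. Part (3) is exactly the paper's argument: cover $X$ by finitely many principal right ideals, discard the generators lying in $X_1\cup\dots\cup X_m$, and use that every element of $I_\ell(S)$ commutes with right translations so that fixing a generator $u_i$ promotes to fixing $p_{u_iS}$. For (2) you verify Definition~\ref{def:strong-regularity} directly with $Y_j=\dom h_{k_j}$, using that a nonzero element of $I_\ell(S)$ acts by a fixed group element and hence is the identity on its whole domain once it fixes one point; the paper instead observes that group embeddability makes $\G_P(S)$ Hausdorff and reduces (2) to (1). Both routes work, and yours has the advantage of being self-contained.

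Your argument for (1), however, has a genuine gap. Condition~\ref{condition1} asserts the \emph{existence} of bisections $U_1,\dots,U_n$ with $g_k\in U_k$ and $Y\cap U\subset U_1\cup\dots\cup U_n$; you cannot replace the given $U_k$ by the smaller bisections $U_k\cap(\G_P(S)\setminus\Omega(S))$, because shrinking the $U_k$ destroys the covering hypothesis, and the inclusion for the \emph{original} $U_k$ does not force $Y\cap U=\emptyset$ --- nothing prevents the original $U_k$ from meeting $Y$, and in the paper's non-Hausdorff examples they necessarily do (that is the whole phenomenon being studied). The correct use of Hausdorffness is different: if $(y_i)_i$ is a net in $Y\cap U\cap U_k$ converging to $x$ in $\Gu$, then, since $s|_{U_k}$ is a homeomorphism onto $s(U_k)$ and $s(y_i)=y_i\to x=s(g_k)$, the same net converges in $\G_P(S)$ to $g_k\ne x$, which is impossible in a Hausdorff space. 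Hence $x\notin\overline{U_k\cap\Gu}$ for each $k$, so some neighbourhood $W$ of $x$ in $\Gu$ satisfies $Y\cap U\cap W=\emptyset$, contradicting density of $Y$ in $\Omega(S)$. Equivalently, you can invoke the paper's observation immediately after Condition~\ref{condition1}: the condition forces every net in $Y$ converging to $x$ to accumulate at some $g_k\ne x$, and a convergent net in a Hausdorff space cannot accumulate at a point other than its limit.
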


We remark that apart from the easy implication $(2)\Rightarrow(1)$, which will be explained shortly, there are no relations between conditions (1)--(3). For example, certain Baumslag--Solitar monoids are group embeddable but are not finitely aligned~\cite{MR3000828}*{Lemma~2.12}. Examples of finitely aligned monoids with non-Hausdorff $\G_P(S)$ can be found among the Zappa--Sz\'ep products $G\bowtie X^*$ defined by self-similar actions of groups on free monoids with infinite number of generators~\cite{MR2395198}; see Remark~\ref{rem:non-Haus} for a related in spirit example with trivial group of units. Nevertheless, if $S$ is finitely aligned and right cancellative, then $\G_P(S)$ is Hausdorff, see~\cite{MR4151331}*{Lemma~7.1} or~\cite{Li-I}*{Remark~4.3}.

\bp[Proof of  Proposition~\ref{prop:strong-regularity-suff}]
Condition (1) is obviously sufficient for strong C$^*$-regularity, since Condition~\ref{condition1} can be satisfied only for non-Hausdorff groupoids.

Condition (2) is known, and is easily seen, to be stronger than (1): if $S$ is a submonoid of a group~$G$, then every nonzero element $g$ of $I_\ell(S)$ acts by the left translation by an element $h_g\in G$, and we have either $h_g=e$ and $D(g,\Omega(S))\subset\G_P(S)^{(0)}$ or $h_g\ne e$ and $D(g,\Omega(S))\cap\G_P(S)^{(0)}=\emptyset$.

Assume now that $S$ is finitely aligned and~\eqref{eq:strong-regularity0} is satisfied. Choose a finite set of generators of the right ideal $X$. Let $s_1,\dots,s_l$ be those generators that do not lie in $X_1\cup\dots\cup X_m$. By assumption, for every $1\le j\le l$ we can find $1\le k_j\le n$ such that $h_{k_j}s_j=s_j$. Then~\eqref{eq:strong-regularity} is satisfied for $Y_j=s_jS$.
\ep

\begin{remark}
By~\cite{Li-I}*{Lemma~4.1}, the groupoid $\G_P(S)$ is Hausdorff if and only if whenever $g\in I_\ell(S)$ and $\{s\in S\mid gs=s\}\ne\emptyset$, there are $Y_1,\dots,Y_l\in\J(S)$ such that $\{s\in S\mid gs=s\}=Y_1\cup\dots\cup Y_l$. Using this characterization one can easily see that (1) implies strong C$^*$-regularity without relying on Lemma~\ref{lem:regularity}.\er
\end{remark}

In addition to $\G_P(S)$ there is another closely related groupoid associated to $S$, which was introduced by Spielberg~\cite{MR4151331} and which we will now turn to. 

Consider the collection $\bar\J(S)$ of subsets of $S$ obtained by adding to $\J(S)$ all sets of the form $X\setminus\cup^m_{i=1}X_i$ for $X,X_1,\dots,X_m\in\J(S)$. It is a semigroup under intersection. When we want to view $E(S)$ as its subsemigroup, we will write $p_X$ instead of $X$ for the elements of $\bar\J(S)$. Every $\chi\in\Omega(S)$ extends to a semi-character on $\bar\J(S)$ by letting $\chi(p_{X\setminus\cup^m_{i=1}X_i})=1$ ($X,X_1,\dots,X_m\in\J(S)$) if $\chi(p_X)=1$ and $\chi(p_{X_i})=0$ for all $i$, and $\chi(p_{X\setminus\cup^m_{i=1}X_i})=0$ otherwise.

Now, define an equivalence relation $\sim$ on $\G_P(S)$ by declaring $[g,\chi]\sim[h,\chi]$ iff there exists $X\in\bar\J(S)$ such that $\chi(p_X)=1$ and $g|_X=h|_X$. Consider the quotient groupoid
$$
\G(S):=\G_P(S)/\sim.
$$
This groupoid is denoted by $G_2(S)$ in~\cite{MR4151331} and by $I_l\bar\ltimes\Omega$ in~\cite{Li-I}.

Similarly to Lemma~\ref{lem:norling}, the representation $\rho_{\chi_e}$ of $C^*_r(\G(S))$ defines a surjective $*$-homomorphism $C^*_r(\G(S))\to C^*_r(S)$. Negation of Condition~\ref{condition1} for $\G=\G(S)$, $Y=\{\chi_s\mid s\in S\}$ and all $x\in\G^{(0)}\setminus Y$ leads to the following definition.

\begin{definition}\label{def:regularity}
We say that $S$ is \textbf{C$^*$-regular} if, given $h_1,\dots,h_n\in I_\ell(S)$ and $X\in\bar\J(S)$ satisfying
\begin{equation*}\label{eq:regularity0}
\emptyset\ne X\subset \bigcup^n_{k=1}\{s\in S: h_ks=s\},
\end{equation*}
there are sets $Y_1,\dots, Y_l\in\bar\J(S)$ and indices $1\le k_j\le n$ ($j=1,\dots,l$) such that
\begin{equation*}\label{eq:regularity}
X\subset\bigcup^l_{j=1}Y_j\qquad\text{and}\qquad h_{k_j}|_{Y_j}=\id\quad\text{for all}\quad 1\le j\le l.
\end{equation*}
\end{definition}
Note that by the same argument as in Remark~\ref{rem:strong-regularity}, in order to check C$^*$-regularity it suffices to consider $X=X_0\setminus(X_1\cup\dots\cup X_m)$ ($X_i\in\J(S)$) and $h_k$ such that $X_0\subset \dom h_k$ for all $k$. Note also that the only difference between C$^*$-regularity and strong C$^*$-regularity is that the sets~$Y_j$ are required to be in $\bar\J(S)$ in the first case and in $\J(S)$ in the second. In particular, strong C$^*$-regularity implies C$^*$-regularity.

\smallskip

Similarly to Proposition~\ref{prop:strong-regularity} we get the following result.

\begin{proposition}\label{prop:regularity}
If $S$ is a C$^*$-regular left cancellative monoid, then the representation $\rho_{\chi_e}$ of $C^*_r(\G(S))$ defines an isomorphism
$C^*_r(\G(S))\cong C^*_r(S)$.
\end{proposition}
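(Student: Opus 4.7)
The plan is to mirror the proof of Proposition~\ref{prop:strong-regularity}, with Lemma~\ref{lem:regularity} replaced by its $\G(S)$-analogue: if $S$ is C*-regular, then Condition~\ref{condition1} is not satisfied for $\G=\G(S)$, $Y=\{\chi_s\mid s\in S\}$, and any $\chi\in\G(S)^{(0)}\setminus Y$. Granting this, Proposition~\ref{prop:weak-containment} implies that $\rho_\chi$ is weakly contained in $\bigoplus_{y\in Y}\rho_y$, and since all $\chi_s$ lie on a single $\G(S)$-orbit the representations $\rho_{\chi_s}$ are mutually equivalent. Hence $\rho_{\chi_e}$ computes the reduced norm on $C_c(\G(S))$, which together with the already-noted surjective homomorphism $C^*_r(\G(S))\to C^*_r(S)$ yields the isomorphism.

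To establish the failure of Condition~\ref{condition1} from C*-regularity, I would first unwind this condition for $\G(S)$. An element $g_k=[h_k,\chi]_\sim$ of the isotropy at $\chi$ is nontrivial iff no $X'\in\bar\J(S)$ with $\chi(p_{X'})=1$ satisfies $h_k|_{X'}=\id$. After shrinking $U$, the open bisections may be taken as the images $U_k$ of $D(h_k,\Omega(S))$ in $\G(S)$, and the neighbourhood as $U=\{\eta\in\Omega(S):\eta(p_{X_0})=1,\ \eta(p_{X_i})=0,\ i=1,\dots,m\}$ for some $X_0,X_i\in\J(S)$. A semi-character $\chi_s\in U$ lies in $U_k$ (as a unit of $\G(S)$) precisely when $[h_k,\chi_s]_\sim=\chi_s$, i.e.\ when there is $X_s\in\bar\J(S)$ with $s\in X_s$ and $h_k|_{X_s}=\id$; in particular $h_ks=s$. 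Hence $\tilde X:=X_0\setminus\bigcup_iX_i\in\bar\J(S)$ satisfies $\tilde X\subset\bigcup_k\{s:h_ks=s\}$, and C*-regularity supplies $Y_1,\dots,Y_l\in\bar\J(S)$ and indices $k_j$ with $\tilde X\subset\bigcup_jY_j$ and $h_{k_j}|_{Y_j}=\id$.

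The main obstacle is then to produce some $j$ with $\chi(p_{Y_j})=1$, for then $[h_{k_j},\chi]_\sim=\chi$ would contradict nontriviality of $g_{k_j}$. I would handle this by extending property~(ii) in the description of $\Omega(S)$ from $\J(S)$ to $\bar\J(S)$: if $\chi\in\Omega(S)$, $X\in\bar\J(S)$ satisfies $\chi(p_X)=1$, and $X=\bigcup_iX_i'$ with $X_i'\in\bar\J(S)$, then $\chi(p_{X_i'})=1$ for some $i$. The key observation is that for any $X'\in\bar\J(S)$ the set $\{\eta\in\Omega(S):\eta(p_{X'})=1\}$ is clopen, so if all $\chi(p_{X_i'})$ vanished then a basic neighbourhood of $\chi$ would contain some $\chi_s$ with $s\in X\setminus\bigcup_iX_i'=\emptyset$, which is absurd. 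Applied to $\tilde X=\bigcup_j(Y_j\cap\tilde X)$ and combined with the semi-character identity $\chi(p_{Y_j\cap\tilde X})=\chi(p_{Y_j})\chi(p_{\tilde X})$, this yields the required $j$ and closes the argument.
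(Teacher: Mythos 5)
Your proof is correct and follows essentially the same route the paper intends: the paper derives Proposition~\ref{prop:regularity} ``similarly to Proposition~\ref{prop:strong-regularity}'', i.e.\ via the $\G(S)$-analogue of Lemma~\ref{lem:regularity} (whose relevant direction you prove) combined with Proposition~\ref{prop:weak-containment} and the equivalence of the representations $\rho_{\chi_s}$. Your extension of property~(ii) of $\Omega(S)$ from $\J(S)$ to $\bar\J(S)$ via clopenness of the sets $\{\eta:\eta(p_{X'})=1\}$ and density of $\{\chi_s\}$ is exactly the detail the paper leaves implicit, and it is handled correctly.
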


Thus, if $S$ is C$^*$-regular, we can define, following~\cite{MR4151331}, the full semigroup C$^*$-algebra of $S$ by
$$
C^*(S):=C^*(\G(S)).
$$
A presentation of $C^*(\G(S))$ in terms of generators and relations is given in~\cite{MR4151331}*{Theorem~9.4}.

We therefore have two candidates for a groupoid model of $C^*_r(S)$, and hence two potentially different definitions of full semigroup C$^*$-algebras associated with $S$. As the following result shows, it is $\G(S)$ which is the preferred model and we have only one candidate for $C^*(S)$.

\begin{proposition}\label{prop:G_P=G}
Assume $S$ is a left cancellative monoid such that $\rho_{\chi_e}$ defines an isomorphism $C^*_r(\G_P(S))\cong C^*_r(S)$. Then $\G_P(S)=\G(S)$.
\end{proposition}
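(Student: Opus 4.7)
The plan is to argue by contradiction. Suppose $\G_P(S)\ne\G(S)$, so there exist $[g,\chi]\ne[h,\chi]$ in $\G_P(S)$ whose images in $\G(S)$ coincide; by definition of $\sim$ there is $X=X_0\setminus(X_1\cup\dots\cup X_m)\in\bar\J(S)$ with $\chi(p_{X_0})=1$, $\chi(p_{X_i})=0$ for $i\ge1$, and $g|_X=h|_X$. Setting $k=g^{-1}h\in I_\ell(S)$, I get $X\subset\dom k$, $k|_X=\id_X$, and $[k,\chi]$ is a nontrivial element of the isotropy group $\G_P(S)^\chi_\chi$. The strategy is to verify Condition~\ref{condition1} with $n=1$ at the point $x=\chi$ for $\G=\G_P(S)$ and $Y=\{\chi_s:s\in S\}$, taking $g_1=[k,\chi]$, bisection $U_1=D(k,\Omega(S))$, and neighborhood
$$U=\{\eta\in\Omega(S):\eta(p_{X_0})=1,\ \eta(p_{X_i})=0\ \text{for all}\ i\ge1\}$$
of $\chi$. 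By the observation in the paragraph immediately following Condition~\ref{condition1}, this will imply that $\rho_\chi$ is not weakly contained in $\bigoplus_{s\in S}\rho_{\chi_s}$; but under the hypothesis $\rho_{\chi_e}$ is a faithful representation of $C^*_r(\G_P(S))$, and since all $\chi_s$ lie on the same $\G_P(S)$-orbit the representations $\rho_{\chi_s}$ are mutually equivalent, so every representation of $C^*_r(\G_P(S))$, in particular $\rho_\chi$, is weakly contained in $\bigoplus_{s\in S}\rho_{\chi_s}$, giving the required contradiction.

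The key elementary fact that drives the verification is that every $f\in I_\ell(S)$ is right-$S$-equivariant on its domain: if $s\in\dom f$ and $c\in S$, then $sc\in\dom f$ and $f(sc)=f(s)c$. This is immediate from the definition of $I_\ell(S)$ as the inverse semigroup generated by left translations $L_a\colon t\mapsto at$, since an equation $(L_{a_1}^{-1}L_{b_1}\cdots L_{a_n}^{-1}L_{b_n})(s)=t$ corresponds to a finite system of word equations $a_is_i=b_is_{i-1}$ that is preserved under multiplying each $s_i$ on the right by $c$.

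Right-equivariance supplies both remaining ingredients. First, it forces $\chi\notin Y$: if $\chi=\chi_u$ for some $u\in S$, then $u\in X$, hence $g(u)=h(u)$, and by equivariance $g|_{uS}=h|_{uS}$, so $gp_{uS}=hp_{uS}$ with $\chi_u(p_{uS})=1$, contradicting $[g,\chi]\ne[h,\chi]$ in $\G_P(S)$. Second, it yields $Y\cap U\subset U_1$: for $\chi_s\in Y\cap U$ we have $s\in X$, so $k(s)=s$, and by equivariance $k|_{sS}=\id_{sS}$, whence $kp_{sS}=p_{sS}$ with $\chi_s(p_{sS})=1$, which forces $[k,\chi_s]=[p_S,\chi_s]=\chi_s$ in $\G_P(S)$, i.e.\ the unit $\chi_s$ lies on the bisection $U_1$. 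I expect the only subtle point to be formalizing the right-equivariance of $I_\ell(S)$-elements; once this is in place, both assertions above are immediate and the conclusion follows from the cited observation together with the standard fact that a faithful representation of a C$^*$-algebra weakly contains every other representation.
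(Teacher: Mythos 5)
Your proof is correct and is in substance the same as the paper's: after reducing to a nontrivial isotropy element of $\G_P(S)^\chi_\chi$ represented by some $k\in I_\ell(S)$ acting as the identity on a set $X_0\setminus\bigcup_iX_i$ charged by $\chi$, the paper simply writes down the explicit element $\un_{D(k,U)}-\un_U$ of $C_c(\G_P(S))$, notes it is nonzero (the reduced norm dominates the sup-norm), and computes $\rho_{\chi_e}(\un_{D(k,U)}-\un_U)=(\lambda_{s_1}^*\lambda_{t_1}\cdots\lambda_{s_n}^*\lambda_{t_n}-1)\un_{X_0\setminus\cup_iX_i}=0$, whereas you verify Condition~\ref{condition1} with $n=1$ and invoke the observation following it, whose proof in the paper is that very element compressed by a function supported near $\chi$. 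The auxiliary facts you isolate (right-equivariance of elements of $I_\ell(S)$, which gives $\chi\notin Y$ and $Y\cap U\subset U_1$, and the nontriviality of $[k,\chi]$) are exactly what makes the paper's one-line computation of $\rho_{\chi_e}$ work, so nothing essential is missing.
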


\bp
Assume $\G_P(S)\ne\G(S)$. Then there exists $[g,\chi]\in \G_P(S)$ such that $[g,\chi]\ne\chi$ but $[g,\chi]\sim\chi$. Let $X,X_1,\dots,X_m\in\J(S)$ be such that $\chi(p_X)=1$, $\chi(p_{X_i})=0$ for all $i$ and $gs=s$ for all $s\in X\setminus\cup^m_{i=1}X_i$. Consider the clopen set $U\subset\G_P(S)^{(0)}$ defined by~\eqref{eq:U-neighbourhood} and the function $f=\un_{D(g,U)}-\un_U$ on $\G_P(S)$. Then $f\ne0$, but if $g=s_1^{-1}t_1\dots s_n^{-1}t_n$ and we identify $\ell^2(\G_P(S)_{\chi_e})$ with $\ell^2(S)$, then
$$
\rho_{\chi_e}(f)=(\lambda_{s_1}^*\lambda_{t_1}\dots \lambda_{s_n}^*\lambda_{t_n}-1)\un_{X\setminus\cup^m_{i=1}X_i}=0.
$$
Therefore $\rho_{\chi_e}\colon C^*_r(\G_P(S))\to C^*_r(S)$ has a nontrivial kernel.
\ep

\begin{corollary}
If $S$ is strongly C$^*$-regular, then $S$ is C$^*$-regular and $\G_P(S)=\G(S)$.
\end{corollary}

\bp
The first statement follows from the definitions, as was already observed after Definition~\ref{def:regularity}. The equality $\G_P(S)=\G(S)$ follows from Propositions~\ref{prop:strong-regularity} and~\ref{prop:G_P=G}.
\ep

In particular, by Proposition~\ref{prop:strong-regularity-suff}, if $\G_P(S)$ is Hausdorff or $S$ is finitely aligned, then $\G_P(S)=\G(S)$. This has been already known, see~\cite{Li-I}*{Lemma~3.2}.

The equality $\G_P(S)=\G(S)$ in the strong C$^*$-regular case is also an immediate consequence of the following criterion.

\begin{lemma}\label{lem:g=gp}
For every left cancellative monoid $S$, we have $\G_P(S)=\G(S)$ if and only if $S$ has the following property: given $g\in I_\ell(S)$ and  $X,X_1,\dots,X_m\in\J(S)$ such that $gs=s$ for all $s\in X\setminus\cup^m_{i=1}X_i\ne\emptyset$, there are $Y_1,\dots,Y_l\in\J(S)$ such that
$$
X\setminus\bigcup^m_{i=1} X_i\subset\bigcup^l_{j=1}Y_j\qquad\text{and}\qquad gp_{Y_j}=p_{Y_j}\quad\text{for all}\quad 1\le j\le l.
$$
\end{lemma}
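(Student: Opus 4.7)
My plan is to translate the equality $\G_P(S)=\G(S)$ into the statement that the equivalence relation $\sim$ on $\G_P(S)$ is trivial. To avoid carrying two partial bijections around, I first reduce the general implication $[g,\chi]\sim[h,\chi]\Rightarrow[g,\chi]=[h,\chi]$ to the special case $[u,\chi]\sim\chi\Rightarrow[u,\chi]=\chi$ by passing to $u=h^{-1}g$: if $g|_X=h|_X$ on some $X\in\bar\J(S)$ then $X\subset\dom u$ and $u$ fixes $X$ pointwise, and conversely from $up_Y=p_Y$ one recovers $gp_Y=hp_Y$ by left-multiplying with $h$. So throughout both directions it suffices to work with $h=p_S$.

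For the direction $(\Leftarrow)$, assume the condition and $[u,\chi]\sim\chi$. By definition of $\sim$ there exist $X_0,X_1,\dots,X_m\in\J(S)$ with $\chi(p_{X_0})=1$, $\chi(p_{X_i})=0$ for $i\ge1$, and $us=s$ for every $s\in X_0\setminus\bigcup_iX_i$; property~(ii) of $\chi\in\Omega(S)$ forces this last set to be nonempty (otherwise $X_0=\bigcup_i(X_0\cap X_i)$ would give some $\chi(p_{X_i})\ge\chi(p_{X_0\cap X_i})=1$). The hypothesis then produces $Y_1,\dots,Y_l\in\J(S)$ with $X_0\setminus\bigcup_iX_i\subset\bigcup_jY_j$ and $up_{Y_j}=p_{Y_j}$. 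Writing $X_0=\bigcup_i(X_0\cap X_i)\cup\bigcup_j(X_0\cap Y_j)$ as a finite union of constructible ideals and invoking property~(ii) again, the $\chi(p_{X_0\cap X_i})=1$ alternative is ruled out, so some $j$ satisfies $\chi(p_{X_0\cap Y_j})=1$. The ideal $Y:=X_0\cap Y_j$ is in $\J(S)$, satisfies $\chi(p_Y)=1$, and $up_Y=p_Y$ because $Y\subset Y_j$; this witnesses $[u,\chi]=\chi$ in $\G_P(S)$.

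For the direction $(\Rightarrow)$, assume $\G_P(S)=\G(S)$ and take $g\in I_\ell(S)$ together with $X,X_1,\dots,X_m\in\J(S)$ satisfying $gs=s$ on $X\setminus\bigcup_iX_i\ne\emptyset$. Replacing $X$ by $X\cap\dom g$ leaves $X\setminus\bigcup_iX_i$ unchanged, so we may assume $X\subset\dom g$. Then for every $\chi$ in the clopen set
$$
K=\{\chi\in\Omega(S)\mid\chi(p_X)=1,\ \chi(p_{X_i})=0\ \text{for all}\ i\}
$$
one has $[g,\chi]\in\G_P(S)$, and $X\setminus\bigcup_iX_i\in\bar\J(S)$ witnesses $[g,\chi]\sim\chi$; by assumption $[g,\chi]=\chi$, producing some $Y_\chi\in\J(S)$ with $\chi(p_{Y_\chi})=1$ and $gp_{Y_\chi}=p_{Y_\chi}$. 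The clopen sets $\{\psi\mid\psi(p_{Y_\chi})=1\}$ cover $K$, and $K$ is compact as a closed subset of $\Omega(S)$, so finitely many $Y_1,\dots,Y_l$ suffice. Evaluating at the characters $\chi_s$ for $s\in X\setminus\bigcup_iX_i$, which lie in $K$, yields $X\setminus\bigcup_iX_i\subset\bigcup_jY_j$, as required.

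The main obstacle I expect is careful bookkeeping with domains of partial bijections when passing to $h^{-1}g$ and when replacing $X$ by $X\cap\dom g$, together with the routine but slightly fiddly extension of semi-characters from $\J(S)$ to $\bar\J(S)$; conceptually, the backward direction is an application of property~(ii) of points of $\Omega(S)$, while the forward direction is a compactness argument on $\widehat{E(S)}$.
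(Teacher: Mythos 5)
Your proof is correct. The ``if'' direction coincides with the paper's argument (you fill in the two appeals to property~(ii) of $\Omega(S)$ that the paper leaves implicit, and you make explicit the reduction from $[g,\chi]\sim[h,\chi]$ to $[h^{-1}g,\chi]\sim\chi$, which the paper also performs silently; your bookkeeping with $hh^{-1}gp_Y=gp_Y$ checks out). The ``only if'' direction, however, is organized differently: the paper argues by contraposition, assuming the covering condition fails and manufacturing a bad semi-character as a cluster point of the net $(\chi_{s_F})_F$ indexed by finite families of ideals fixed by $g$, exactly as in its proof of Lemma~\ref{lem:regularity}; you instead argue directly, observing that every $\chi$ in the compact clopen set $K=\{\chi\mid\chi(p_X)=1,\ \chi(p_{X_i})=0\}$ satisfies $[g,\chi]\sim\chi$, extracting from the hypothesis a constructible ideal $Y_\chi$ fixed by $g$ with $\chi(p_{Y_\chi})=1$, and taking a finite subcover. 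The two arguments are dual uses of the compactness of $\widehat{E(S)}$: yours is shorter and avoids constructing the cluster point, while the paper's contrapositive form has the advantage of exhibiting an explicit witness $\chi$ with $[g,\chi]\sim\chi$ and $[g,\chi]\ne\chi$, a construction it reuses elsewhere. The only step you gloss over is that $[h^{-1}g,\chi]$ is a legitimate element of $\G_P(S)$, i.e.\ $\chi(p_{\dom(h^{-1}g)})=1$; this follows from $\chi(p_X)=1$ for the witness $X\in\bar\J(S)$ together with one more application of property~(ii), and falls under the ``domain bookkeeping'' you already flag, so it is not a genuine gap.
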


\bp
Assume first that the condition in the formulation of the lemma is satisfied. In order to prove that $\G_P(S)=\G(S)$, we have to show that if $[g,\chi]\sim\chi$ for some $g\in I_\ell(S)$ and $\chi\in\Omega(S)$, then $[g,\chi]=\chi$. Let $X,X_1,\dots,X_m\in\J(S)$ be such that $\chi(p_X)=1$, $\chi(p_{X_i})=0$ for all $i$ and $gs=s$ for all $s\in X\setminus\cup^m_{i=1}X_i$. By assumption, there are $Y_1,\dots,Y_l\in\J(S)$ such that $X\setminus\bigcup^m_{i=1} X_i\subset\bigcup^l_{j=1}Y_j$ and $gp_{Y_j}=p_{Y_j}$ for all $1\le j\le l$. But then $\chi(p_{Y_j})=1$ for some $j$, hence $[g,\chi]=\chi$.

Assume now that the condition in the formulation is not satisfied. Then, similarly to the proof of Lemma~\ref{lem:regularity}, we can find $g\in I_\ell(S)$,  $X,X_1,\dots,X_m\in\J(S)$ and $\chi\in\Omega(S)$ such that $\chi(p_X)=1$, $\chi(p_{X_i})=0$ for all $i$, $\chi(p_J)=0$ for all $J\in\J(S)$ satisfying $gp_J=p_J$, and $gs=s$  for all $s\in X\setminus\cup^m_{i=1}X_i$. Then $[g,\chi]\sim\chi$ and $[g,\chi]\ne\chi$, so $\G_P(S)\ne\G(S)$.
\ep

We finish this section by showing that under rather general assumptions $C^*_r(S)$ coincides with the essential groupoid C$^*$-algebras of $\G_P(S)$ and $\G(S)$.

\begin{proposition}\label{prop:essential1}
Let $S$ be a countable left cancellative monoid. Assume that for any nontrivial units $u_1,\dots,u_k\in S^*\setminus\{e\}$ and every $X\in\bar\J(S)$ containing $e$, there is $Z\in\bar\J(S)$ such that $\emptyset\ne Z\subset X$ and $u_is\ne s$ for all $s\in Z$ and $i=1,\dots,k$.
Then the maps $\rho_{\chi_e}\colon C^*_r(\G_P(S))\to C^*_r(S)$ and $\rho_{\chi_e}\colon C^*_r(\G(S))\to C^*_r(S)$ define isomorphisms
$$
C^*_\ess(\G_P(S))\cong C^*_r(S)\cong C^*_\ess(\G(S)).
$$
\end{proposition}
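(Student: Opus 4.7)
The plan is to apply Proposition~\ref{prop:essential} to both $\G=\G_P(S)$ and $\G=\G(S)$ with $Y=\{\chi_s\mid s\in S\}$. Density of $Y$ in $\Omega(S)=\Gu$ is built into the definition of $\Omega(S)$, and countability of $S$ (hence of $I_\ell(S)$) supplies a countable cover of $\G$ by the basic open bisections $D(g,\Omega(S))$. Once we establish $\chi_s\notin D_0$ for every $s\in S$, Proposition~\ref{prop:essential} identifies $J_\sing$ with $\bigcap_{s\in S}\ker\rho_{\chi_s}$; since the $\chi_s$ form a single $\G$-orbit (Lemma~\ref{lem:norling}), the $\rho_{\chi_s}$ are mutually equivalent, so this intersection equals $\ker\rho_{\chi_e}$, and Lemma~\ref{lem:norling} delivers the desired isomorphisms $C^*_\ess(\G_P(S))\cong C^*_r(S)\cong C^*_\ess(\G(S))$.

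Because $D_0$ is invariant under the groupoid action (transport by $D(\lambda_s,V)$ is a local homeomorphism matching up isotropy groups and nearby bisections), it suffices to show $\chi_e\notin D_0$ in each case. Assume the contrary, with witnessing isotropy elements $[h_k,\chi_e]$, basic open bisections $U_k=D(h_k,U)$ sharing the neighborhood $U\ni\chi_e$, and $U\setminus\bigcup_k U_k$ having empty interior. The condition $\chi_e(h_k^{-1}h_k)=1$ forces $e\in\dom h_k$, hence $\dom h_k=S$ since the domain is a right ideal; a short induction on word length in $I_\ell(S)$ then shows any such $h_k$ must equal the left translation $\lambda_{u_k}$ by $u_k=h_k(e)$. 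That $[\lambda_{u_k},\chi_e]$ has an inverse in $\G_P(S)$ requires $\chi_e(p_{u_kS})=1$, i.e., $u_kv=e$ for some $v\in S$; left cancellativity applied to $u_k(vu_k)=u_k\cdot e$ then gives $vu_k=e$, so $u_k\in S^*$, and $u_k\ne e$ by nontriviality.

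Apply the hypothesis of the proposition to each $u_k$ to obtain $X^{(k)}_1,\dots,X^{(k)}_{m_k}\in\J(S)$ with $\bigcup_j X^{(k)}_j\ne S$ and $u_ks\ne s$ for $s\notin\bigcup_j X^{(k)}_j$. Set
\[
W=\bigcap_{j,k}\{\chi\in\Omega(S)\mid\chi(p_{X^{(k)}_j})=0\},
\]
a clopen neighborhood of $\chi_e$. For every $\chi_t\in W$ the hypothesis forces $u_kt\ne t$ for all $k$. On the other hand, $\chi_t\in U_k\cap\Gu$ would require some $Z\in\J(S)$ (respectively $Z\in\bar\J(S)$ when $\G=\G(S)$) with $t\in Z$ and $\lambda_{u_k}|_Z=\id$, and hence $u_kt=t$. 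Each $U_k\cap\Gu$ is open in $\Gu$, being the intersection of $U$ with a union of clopen sets $\{\chi:\chi(p_Z)=1\}$ over admissible $Z$, so the open set $W\cap U\cap\bigcup_k U_k$ is disjoint from the dense subset $Y\cap W\cap U$ of $W\cap U$ and must therefore be empty. Thus $W\cap U$ is a nonempty open subset of $\Gu$ contained in $U\setminus\bigcup_k U_k$, contradicting the empty-interior hypothesis.

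I expect the main obstacle to be the identification of each witnessing isotropy element $[h_k,\chi_e]$ with left translation by a nontrivial unit of $S$ (via the two-step argument that $\dom h_k=S$ forces $h_k$ to be a left translation, and the groupoid inverse condition together with left cancellativity makes the translator a unit); this is what makes the hypothesis of the proposition directly applicable. Once this is in hand the density of $Y$ together with the openness of $U_k\cap\Gu$ closes the argument by a standard topological manipulation, and the same argument handles $\G(S)$ with only cosmetic changes.
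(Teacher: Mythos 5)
Your argument is correct and follows essentially the same route as the paper: apply Proposition~\ref{prop:essential} with $Y=\{\chi_s\mid s\in S\}$, reduce to $\chi_e$ by invariance, identify the isotropy at $\chi_e$ with left translations by nontrivial units, and use the hypothesis to produce a clopen neighbourhood of $\chi_e$ containing no $\chi_t$ fixed by any of those units. The one place where you owe a justification is the opening reduction ``with basic open bisections $U_k=D(h_k,U)$ sharing the neighbourhood $U$'': the definition of $D_0$ allows arbitrary open bisections $U_k\ni g_k$, and shrinking them to basic ones enlarges the complement $U\setminus\bigcup_kU_k$, so the empty-interior hypothesis does not survive for free. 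It does survive, because if $D(h_k,V_k)\subset U_k$ is a basic bisection containing $g_k$, then $U_k\cap\Gu\cap V_k=D(h_k,V_k)\cap\Gu$ (a unit of the bisection $U_k$ whose source lies in $V_k$ coincides, by injectivity of $s|_{U_k}$, with the element of $D(h_k,V_k)$ over that source), so replacing $U$ by $U\cap\bigcap_kV_k$ restores the empty-interior property; you should include this, or else argue as the paper does, proving the stronger statement $\chi_e\notin D$ for the set $D\supset D_0$ of dangerous points --- there a net converging to $[u,\chi_e]$ must eventually enter the single bisection $D(u,\Omega(S))$, and no reduction over general bisections is needed. The rest of your proof, including the identification $h_k=\lambda_{u_k}$ with $u_k\in S^*\setminus\{e\}$ and the density argument on $W\cap U$, matches the paper's computation, and your remark that the case of $\G(S)$ requires only cosmetic changes is accurate (a representative $h$ with $[h,\chi_e]\sim[u,\chi_e]$ still equals $\lambda_u$ because $e$ lies in the set on which they agree).
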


\bp
Consider the groupoid $\G_P(S)$.  Let $Y=\{\chi_s|s\in S\}$ and $D_0$ be the set defined in Proposition~\ref{prop:essential}. As $Y$ is dense in $\Omega(S)$ and $\ker\rho_{\chi_s}$ is independent of $s$, by Proposition~\ref{prop:essential} in order to prove the first isomorphism it suffices to show that $D_0\cap Y=\emptyset$.

Since both sets $D_0$ and $Y$ are invariant, it is enough to show that $\chi_e\notin D_0$. By Lemma~\ref{lem:norling}, the isotropy group $\G_P(S)^{\chi_e}_{\chi_e}$ consists of the elements $[u,\chi_e]$, $u\in S^*$. Therefore we need to show that if $u_1,\dots,u_k\in S^*\setminus\{e\}$ and $U$ is a neighbourhood of $\chi_e$ in $\Omega(S)$, then the set $U\setminus\cup^k_{i=1}D(u_i,\Omega(S))$ has nonempty interior. By the definition of the topology on $\Omega(S)$, we can find $X\in\bar\J(S)$ such $e\in X$ and $\{\chi\mid\chi(p_X)=1\}\subset  U$. Let $Z\in\bar\J(S)$ be as in the formulation of the proposition. Then the clopen set $V=\{\chi\mid\chi(p_Z)=1\}$ is contained in $U$. We claim that it does not intersect $D(u_i,\Omega(S))$ for all~$i$.

Assume $\chi\in V\cap D(u_i,\Omega(S))$ for some $i$. This means that $[u_i,\chi]=[e,\chi]$, that is, there is $W\in\J(S)$ such that $\chi(p_W)=1$ and $u_is=s$ for all $s\in W$. But then $\chi(p_{Z\cap W})=1$, so $Z\cap W$ is nonempty, contradicting the property $u_is\ne s$ for all $s\in Z$.

This proves the proposition for $\G_P(S)$, the proof for $\G(S)$ is essentially the same.
\ep

\bigskip

\section{Example of a regular monoid}

Consider the monoid $S$ given by the monoid presentation
\begin{equation}\label{eq:ex1}
S=\langle a, b, x_n, y_n \ (n\in\mathbb{Z}) : abx_n=bx_n,\  aby_n=by_{n+1} \ (n\in \mathbb{Z})\rangle.
\end{equation}

Our goal is to prove the following.

\begin{proposition}\label{prop:monoid-S}
The monoid $S$ defined by~\eqref{eq:ex1} is left cancellative and strongly C$^*$-regular. It is not finitely aligned and the groupoid $\G_P(S)=\G(S)$ is not Hausdorff.
\end{proposition}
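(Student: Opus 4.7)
The plan is to treat the four assertions in sequence, all resting on a normal-form analysis of $S$.

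\emph{Left cancellativity.} The rewriting system $abx_n\to bx_n$, $aby_n\to by_{n+1}$ is length-decreasing (hence terminating), and its left-hand sides never overlap nontrivially since each has length three, begins with $ab$, and is pinned down by its terminal letter. Hence the system is confluent and every element of $S$ has a unique irreducible normal form. Left multiplication by $b$, $x_n$, or $y_n$ merely prepends a letter, while left multiplication by $a$ triggers a rewrite only when the argument begins with $bx_n$ or $by_n$; a case analysis on the first letters of the argument shows that each such action is injective on normal forms, yielding left cancellativity.

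\emph{Not finitely aligned.} A normal-form calculation gives $aS\cap bS=\bigcup_{n\in\Z}(bx_nS\cup by_nS)$, since $bx_n=abx_n\in aS$, $by_{n+1}=aby_n\in aS$, and any element of $aS$ whose normal form begins with $b$ must arise from one of these relations. Because $bx_n\notin bx_mS$ for $n\ne m$ by the normal form, no finite subset of $\{bx_n\}_{n\in\Z}$ generates this intersection as a right ideal, so $S$ is not finitely aligned.

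\emph{Strong C$^*$-regularity (hence $\G_P(S)=\G(S)$).} Given $h_1,\dots,h_n\in I_\ell(S)$ and $X,X_1,\dots,X_m\in\J(S)$ satisfying \eqref{eq:strong-regularity0}, I would first use Remark~\ref{rem:strong-regularity} to reduce to the case $X\subset\dom h_k$ for all $k$. The technical core is a structural lemma: for each $h\in I_\ell(S)$ and each $X_0\in\J(S)$ with $X_0\subset\dom h$, the set $\{s\in X_0:hs=s\}$ is a finite union of principal ideals $t_jS\subset X_0$ with $ht_j=t_j$. I would prove this by writing $h$ as a composition $s_1^{-1}t_1\cdots s_k^{-1}t_k$ and using the normal form of $S$ to enumerate the finitely many initial patterns that a fixed point $s\in X_0$ can exhibit, covering each by a single principal ideal. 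Applying the lemma to each $h_k$ and intersecting with $X\setminus\bigcup X_i$ delivers the required ideals $Y_j$. The equality $\G_P(S)=\G(S)$ then follows from Lemma~\ref{lem:g=gp}, whose hypothesis is the $n=1$ case of strong C$^*$-regularity.

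\emph{Non-Hausdorffness and main obstacle.} I would apply the criterion of \cite{Li-I}*{Lemma~4.1}, recalled after Proposition~\ref{prop:strong-regularity-suff}, to $g=a$. By normal-form analysis the fixed set is $\bigcup_{n\in\Z}bx_nS$, and the claim to establish is that this is not a finite union of constructible ideals contained in it. The sublemma needed is that every $Y\in\J(S)$ with $Y\subset\bigcup_n bx_nS$ satisfies $Y\subset\bigcup_{n\in F}bx_nS$ for some finite $F\subset\Z$; this is shown by induction on the length of a presentation $Y=s_1^{-1}t_1\cdots s_k^{-1}t_kS$ using the normal form. A finite union of such bounded-index ideals cannot exhaust the $\Z$-indexed family $\{bx_nS\}$. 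The main obstacle is producing a structural analysis of $\J(S)$ sharp enough both to deliver finite covers of $X\setminus\bigcup X_i$ (giving strong C$^*$-regularity) and to bound the indices of constructibles lying inside $\bigcup_n bx_nS$ (giving non-Hausdorffness); the ``$X\setminus\bigcup X_i$'' flexibility is precisely what separates the two conclusions.
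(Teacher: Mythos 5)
Your arguments for left cancellativity (via a complete rewriting system: the two families of left-hand sides $abx_n$, $aby_n$ indeed have no overlaps, so unique normal forms exist and the case analysis on left multiplication can be carried out), for the failure of finite alignment, and for non-Hausdorffness via Li's criterion applied to $g=a$ are all viable. The paper instead proves cancellativity by hand with $\tau$-sequences and a non-overlapping relation $\perp$, and obtains non-Hausdorffness from an explicit net $\chi_{bx_n}$ converging to both $\chi$ and $[a,\chi]$; these are essentially interchangeable routes resting on the same description of $\J(S)$ (Lemmas~\ref{lem:S-principal-intersections}--\ref{lem:bxn}).

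The strong C$^*$-regularity argument, however, has a fatal gap: the ``structural lemma'' you propose is false, and in fact it contradicts your own fourth part. Take $h=a$ and $X_0=S$. Then $\{s\in X_0: hs=s\}=\bigcup_{n\in\Z}bx_nS$, and since any principal (indeed any constructible) ideal contained in this set lies inside a single $bx_nS$, it is not a finite union of principal ideals fixed by $a$ --- this is exactly the computation you use to prove non-Hausdorffness. More generally, if $\{s\in\dom h: hs=s\}$ were always a finite union of constructible ideals on which $h$ acts as the identity, Li's criterion would force $\G_P(S)$ to be Hausdorff. So no lemma of this uniform shape can hold, and the regularity of $S$ must exploit the set difference $X\setminus\bigcup_i X_i$ in an essential way (as you suspect in your closing sentence, but do not implement). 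The paper's mechanism is different: after reducing to $X=\bigcup_n x_nS\cup\bigcup_n y_nS$, it shows (Lemma~\ref{lem:S-technical-1}) that the only constructible ideals containing a given $y_n$ are $S$, $y_nS$ and $X$, so that $X\setminus\bigcup_i X_i$ contains $y_n$ for all but finitely many $n$ and hence some $h_k$ fixes some $y_m$; and then (Lemma~\ref{lem:fix-y-m}) that any $h\in I_\ell(S)$ with $\dom h\supset X$ fixing some $y_m$ must equal $e$ or $p_X$, so that this single $h_k$ already satisfies $h_kp_X=p_X$. The asymmetry between the $x_n$ (fixed by $a$) and the $y_n$ (shifted by $a$) is the whole point of the example --- it is precisely what is destroyed in the companion monoid $T$ --- and your proposed lemma erases it.
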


The proof is divided into several lemmas. But first we need to introduce some notation. Consider the set $\mathbb S$ of finite words (including the empty word) in the alphabet $\{a,b,x_n,y_n\ (n\in\Z)\}$. We say that two words are equivalent if they represent the same element of $S$. Let $\tau\subset\mathbb S\times\mathbb S$ be the symmetric set of relations defining~$S$,~so
$$
\tau=\{(abx_n,bx_n),(bx_n,abx_n),(aby_n,by_{n+1}),(by_{n+1},aby_n)\ (n\in\Z)\}.
$$
By a \emph{$\tau$-sequence} we mean a finite sequence $s_0,\dots,s_n$ of words such that for every $i=1,\dots, n$ we can write $z_{i-1}=c_ip_id_i$ and $z_i=c_iq_id_i$ with $(p_i,q_i)\in\tau$. Then by definition two words $s$ and $t$ are equivalent if and only if there is a $\tau$-sequence $s_0,\dots,s_n$ with $s_0=s$ and $s_n=t$.

\begin{lemma}\label{lem:S-cancellation}
The monoid $S$ is left cancellative.
\end{lemma}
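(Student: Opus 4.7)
The plan is to build unique normal forms for elements of $S$ via a terminating confluent rewriting system, then verify left cancellation by checking that each generator acts injectively on normal forms.

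Orient the defining relations as length-strictly-decreasing rewrite rules $abx_n \to bx_n$ and $aby_n \to by_{n+1}$ for $n \in \Z$; termination is immediate. For local confluence, note that every left-hand side begins with $ab$ and ends with a letter from $\{x_n, y_n : n \in \Z\}$, so no proper suffix of one left-hand side can equal a proper prefix of another, and no left-hand side strictly contains another. Hence the system has no critical pairs, so by Newman's lemma it is confluent, and each element of $S$ has a unique irreducible representative (one containing no factor $abx_n$ or $aby_n$), which I will call its normal form.

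To deduce left cancellation it suffices to show that, for each generator $c$, if $u$ and $v$ are normal forms with $cu = cv$ in $S$, then $u = v$. If $c \in \{b\} \cup \{x_n, y_n : n \in \Z\}$, then prepending $c$ to a normal form creates no new reducible factor (every left-hand side begins with $a$), so $cu$ is itself a normal form and injectivity follows by comparing words. The substantial case is $c = a$: here the normal form of $au$ depends on the first two letters of $u$. If $u = bx_n u'$ then $au = abx_n u' \to bx_n u' = u$; if $u = by_n u'$ then $au = aby_n u' \to by_{n+1} u'$; otherwise $au$ is already in normal form. The three outcomes begin with $bx_n$, $by_{n+1}$, and $a$ respectively, and within each subcase $u$ is uniquely recoverable from the output, so $au = av$ in $S$ forces $u$ and $v$ to lie in the same subcase and then to coincide.

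The main obstacle is the case $c = a$: since $a$ literally acts as the identity on left-translates of $bx_n$, one must rule out collisions between the three subcases. The first-letter analysis of the normal form of $au$ is exactly what disentangles them and yields injectivity.
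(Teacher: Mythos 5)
Your proof is correct, and it takes a genuinely different route from the paper's. You set up the length-reducing rewriting system $abx_n\to bx_n$, $aby_n\to by_{n+1}$, check that the left-hand sides admit no overlaps (every LHS starts with $ab$ and ends in an $x_n$ or $y_n$, so no suffix of one matches a prefix of another and no LHS contains another), and conclude via Newman's lemma that every element of $S$ has a unique irreducible normal form; left cancellation then reduces to checking that each generator acts injectively on normal forms, with the only delicate case being $c=a$, which you resolve by observing that the three possible normal forms of $au$ (namely $u$ itself when $u$ starts with $bx_n$, $by_{n+1}u'$ when $u=by_nu'$, and $au$ otherwise) are distinguished by their initial letters and each determines $u$. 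The one micro-step left implicit is that $by_{n+1}u'$ is again irreducible when $by_nu'$ is, but this is immediate since no LHS can involve the second letter of a word beginning with $b$. The paper instead argues directly with $\tau$-sequences: it introduces a relation $s\perp t$ guaranteeing that equivalence cannot mix a prefix $s$ with the suffix $t$, explicitly determines the equivalence classes of the short prefixes $a^k$, $a^kb$, $a^kbx_n$, $a^kby_n$, and peels off the first letter case by case. Your approach buys a stronger structural statement (unique normal forms for all of $S$, from which the paper's later descriptions of equivalence classes, e.g.\ that the words equivalent to $bx_n$ are exactly $a^mbx_n$ for $m\ge0$, fall out immediately), at the cost of invoking the standard rewriting-systems machinery; the paper's argument is more self-contained and only establishes exactly what is needed for cancellation.
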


\bp
It will be convenient to use the following notation. For words $s$ and $t$, let us write $s\perp_0 t$ if every word $abx_n$, $bx_n$, $aby_n$, $by_n$ ($n\in\Z$) in $st$ that begins in $s$ ends in $s$. We write $s\perp t$ if for all words $s'$ and $t'$ such that $s\sim s'$ and $t\sim t'$, we have $s'\perp_0 t'$.
We will repeatedly use that if $s\perp t$ and $st\sim w$ for a word $w$, then $w=s't'$ for some $s'\sim s$ and $t'\sim t$.

In order to prove the lemma, it suffices to show that for all letters $x$ and words $w$, $w'$, the equivalence $xw\sim xw'$ implies that $w \sim w'$.

\smallskip
\underline{Case $x=x_n, y_n$:}\\
The only word equivalent to $x$ is $x$ itself, and we have $x \perp w$, so the equivalence $xw \sim xw'$ furnishes the equivalence $w \sim w'$.

\smallskip
\underline{Case $x=a$:}\\
Write $xw=a^kv$ ($k\geq 1$), with $v$ not starting with $a$. Consider several subcases.

Assume $v$ is empty or starts with $x_n$ or $y_n$. Then $a^k\perp v$. The only word equivalent to $a^k$ is $a^k$ itself. It follows that $xw'=a^kv'$, with $v'\sim v$, and therefore $w=a^{k-1}v\sim a^{k-1}v'=w'$.

Assume now that $v$ starts with $b$ and write $xw=a^kbu$. Assume $u$ is empty or starts with~$a$ or~$b$. Then $a^kb\perp u$. The only word equivalent to $a^kb$ is $a^kb$ itself. It follows that $xw'=a^kbu'$, with $u'\sim  u$, hence $w\sim w'$.

Assume next that $u$ starts with $x_n$ and write $xw=a^kbx_nz$. Then $a^kbx_n\perp z$. The words equivalent to $a^kbx_n$ are $a^mbx_n$ ($m\ge0$).
It follows that $xw'=a^mbx_nz'$ for some $m\ge1$ and $z'\sim z$, hence $w\sim w'$.

Similarly, if $u$ starts with $y_n$, write $xw=a^kby_nz$. Then $a^kby_n\perp z$. The words equivalent to $a^kby_n$ are $a^mby_{n+m-k}$ ($m\ge0$). It follows that $xw'=a^mby_{n+m-k}z'$ for some $m\ge1$ and $z'\sim z$, hence $w\sim w'$.

\smallskip
\underline{Case $x=b$:}\\
If $w$ is empty or starts with $a$ or $b$, then $x \perp w$ and we are done similarly to the first case above.

If $w$ starts with $x_n$, write $xw=bx_nv$. Then $bx_n\perp v$. The words equivalent to $bx_n$ are $a^mbx_n$ ($m\geq 0$), and the only one among them beginning with $b$ is $bx_n$ itself. It follows that $xw'=bx_nv'$, with $v'\sim v$, hence $w\sim w'$. The case when $w$ starts with $y_n$ is similar, since the only word beginning with $b$ that is equivalent to $by_n$ is $by_n$ itself.
\ep

Next we want to describe the constructible ideals of $S$. We start with the following lemma.

\begin{lemma}\label{lem:S-principal-intersections}
We have:
\begin{enumerate}
  \item if $x\in \{x_n, y_n\ (n\in \Z)\}$, $y\in\{a,b,x_n,y_n\ (n\in\Z)\}$ and $x\ne y$, then
$$
xS\cap yS=\emptyset;
$$
\item for all $k\geq 1$,
$$
bS\cap a^kS=bS\cap a^kbS=\bigcup_{n\in\Z} bx_nS\cup \bigcup_{n\in\Z} by_nS.
$$
\end{enumerate}
\end{lemma}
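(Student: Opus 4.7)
My plan is to exploit the simple structure of the defining relation set $\tau=\{(abx_n,bx_n),(bx_n,abx_n),(aby_n,by_{n+1}),(by_{n+1},aby_n)\ (n\in\Z)\}$: every pair $(p,q)\in\tau$ has one of $p,q$ beginning with $a$ and the other with $b$, while neither side begins with any generator $x_n$ or $y_n$. Since a single $\tau$-step from $c p d$ to $c q d$ can alter the first letter of the word only when the prefix $c$ is empty, it follows that if a word begins with $x_n$ or $y_n$, then every $\tau$-equivalent word begins with the same letter. This immediately yields (1): no word of the form $xw$ with $x\in\{x_n,y_n\}$ can be equivalent to a word beginning with a different generator $y$.

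For part (2), I would first establish the easy inclusion $\bigcup_n bx_nS\cup\bigcup_n by_nS\subset bS\cap a^kbS\subset bS\cap a^kS$ by iterating the defining relations: $bx_n\sim a^k bx_n$ follows from $abx_n\sim bx_n$, and $by_n\sim a^kby_{n-k}$ from $aby_m\sim by_{m+1}$, so each $bx_n$ and $by_n$ already lies in $a^kbS$.

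The substantive content is the reverse inclusion $bS\cap a^kS\subset\bigcup_n bx_nS\cup\bigcup_n by_nS$. Given $s\in bS\cap a^kS$, represent it simultaneously as $bv$ and $a^kw$ and take any $\tau$-sequence $s_0,\dots,s_N$ from $bv$ to $a^kw$. Because $k\ge 1$, $s_0$ begins with $b$ while $s_N$ begins with $a$, so there is a smallest index $i$ with $s_i$ beginning with $a$; then $s_{i-1}$ still begins with $b$, and the single $\tau$-step from $s_{i-1}$ to $s_i$ must act at position zero using one of the two rules $(bx_n,abx_n)$ or $(by_{n+1},aby_n)$. Consequently $s_{i-1}$ begins with $bx_n$ or $by_{n+1}$, and the equivalence $bv\sim s_{i-1}$ places $s$ in $bx_nS\cup by_{n+1}S$. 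Combining this with the previous paragraph gives both equalities in (2), and in particular the chain $bS\cap a^kbS=bS\cap a^kS$.

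No genuine obstacle is expected: the only delicate point is verifying rigorously that a $\tau$-step whose prefix $c_i$ is nonempty cannot alter the first letter of the ambient word, but this is immediate from the definition of a $\tau$-sequence together with the observation that no pair in $\tau$ has a side beginning with $x_n$ or $y_n$.
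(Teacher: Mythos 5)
Your proof is correct and follows essentially the same route as the paper's: part (1) via the observation that no relation in $\tau$ has a side beginning with $x_n$ or $y_n$, and part (2) by locating a $\tau$-step with empty prefix whose left-hand side begins with $b$, forcing it to be $bx_n$ or $by_n$. Your version is slightly more explicit in pinpointing that step as the first index where the leading letter switches from $b$ to $a$, but the argument is the same.
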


\begin{proof}
(1) None of the words occurring in the defining relations of $S$ begins with $x_n$ or $y_n$. From this it follows that a word beginning with $x_n$, resp., $y_n$, can only be equivalent to a word beginning with $x_n$, resp., $y_n$. Thus, $xS\cap yS=\emptyset$.

\smallskip

(2) Since we have $bx_n=a^kbx_n$ and $by_n=a^kby_{n-k}$ for all $n$ and $k$, it is clear that
$$
\bigcup_n bx_nS\cup \bigcup_n by_nS \subset bS\cap a^kbS\subset bS\cap a^kS.
$$

To prove the opposite inclusions, assume $s\in bS\cap a^kS$. Take words $w$ and $w'$ in $\mathbb S$ such that $s$ is represented by $bw$ and $a^kw'$. There is a $\tau$-sequence $z_0,\dots,z_m$ such that $bw=z_0$ and $a^kw'=z_m$. We have $z_{i-1}=c_ip_id_i$ and $z_i = c_i q_i d_i$, with $(p_i, q_i)\in \tau$. There must be an index $i$ such that $c_i=\emptyset$ and $p_i$ starts with $b$. But then $p_i=bx_n$ or $p_i=by_n$ for some $n$, since these are the only words in the defining relations of $S$ that begin with $b$.
Therefore $s$ lies in $bx_nS$ or in $by_nS$. \end{proof}

This lemma already implies that $S$ is not finitely aligned, since $b^{-1}aS=\bigcup_n x_nS\cup \bigcup_n y_nS$ by~(2) and
the sets $x_nS$, $y_mS$ are disjoint for all $n$ and $m$ by (1), so the right ideal $b^{-1}aS$ is not finitely generated.

\begin{lemma}\label{lem:constructible-S}
The constructible ideals of $S$ are
\begin{equation}\label{eq:S-constructible-ideals}
\emptyset,\quad sS,\quad \bigcup_{n\in\Z} sx_nS\cup \bigcup_{n\in\Z} sy_nS\quad (s\in S).
\end{equation}
\end{lemma}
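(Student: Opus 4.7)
My plan is to prove both inclusions between $\J(S)$ and the family $\mathcal{F}$ listed in \eqref{eq:S-constructible-ideals}. For the easy inclusion $\mathcal{F}\subseteq\J(S)$: the empty set is $x_0S\cap y_0S$ by Lemma~\ref{lem:S-principal-intersections}(1); principal right ideals $sS$ are constructible by definition; and Lemma~\ref{lem:S-principal-intersections}(2) combined with left cancellation gives $b^{-1}aS=\bigcup_{n\in\Z}x_nS\cup\bigcup_{n\in\Z}y_nS$, so left-translation by $s\in S$ puts $\bigcup_{n\in\Z}sx_nS\cup\bigcup_{n\in\Z}sy_nS$ into $\J(S)$ as $sb^{-1}aS$.

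For the reverse inclusion I use that $\J(S)$ is the smallest family of subsets of $S$ containing $S$ and stable under $X\mapsto tX$ and $X\mapsto t^{-1}X$ for $t\in S$, and show $\mathcal{F}$ has both stability properties. Stability of $\mathcal{F}$ under left translation is immediate from the shape of the three listed forms (one simply replaces $s$ by $ts$). Stability under $X\mapsto t^{-1}X$ reduces, since $t^{-1}$ commutes with unions, to showing $t^{-1}(rS)\in\mathcal{F}$ for all $r,t\in S$.

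For this last step I would first establish a normal form on $S$. The rewriting system $abx_n\to bx_n$, $aby_n\to by_{n+1}$ is length-non-increasing, and no two of its left-hand sides can overlap within a single word (any genuine overlap would force an identification of distinct generators, e.g.\ $x_n=b$ or $x_n=y_m$), so Newman's lemma gives each element of $S$ a unique normal form. Using normal forms I would then analyse $t^{-1}(rS)=\{u\in S:tu\in rS\}$ by tracking the ways the concatenation $tu$ can reduce to a word representing an element of $rS$: the outcome is $\emptyset$ when no such reduction exists; a principal ideal $vS$ when $r=tv$ or $t=rv$ after cancellation; and a third-type set $\bigcup_n vx_nS\cup\bigcup_n vy_nS$ in the remaining cases, where $u$ is forced to begin with a $bx_n$- or $by_n$-prefix so that the absorption identity $abx_n=bx_n$ or $aby_n=by_{n+1}$ of Lemma~\ref{lem:S-principal-intersections}(2) renders $tu$ divisible by $r$.

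The main obstacle is the bookkeeping in this case analysis. The absorption rules make the left-divisibility relation in $S$ richer than in a free monoid: every $a^k$ left-divides every $bx_n$ and every $by_n$, so a single element of $S$ can lie in many principal ideals at once, and normal-form prefixes do not capture left divisors. One must therefore carefully track the places where a terminal block of $a$'s in $t$ meets a $b$-starting prefix of $u$ — and where an internal rewriting shifts a $y$-subscript — in order to see that exactly the three types in \eqref{eq:S-constructible-ideals} arise, with the third type forced by the mechanism of Lemma~\ref{lem:S-principal-intersections}(2).
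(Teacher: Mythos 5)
Your overall strategy coincides with the paper's: the listed sets are constructible because the third type is $s(b^{-1}aS)$ by Lemma~\ref{lem:S-principal-intersections}(2), and the converse amounts to showing that the family $\mathcal{F}$ in \eqref{eq:S-constructible-ideals} is stable under $X\mapsto tX$ and $X\mapsto t^{-1}X$. However, your reduction of the second stability property to ``$t^{-1}(rS)\in\mathcal{F}$ for all $r,t$'' contains a genuine gap. For $X=\bigcup_n sx_nS\cup\bigcup_n sy_nS$ you obtain $t^{-1}X=\bigcup_n t^{-1}(sx_nS)\cup\bigcup_n t^{-1}(sy_nS)$, an infinite union of members of $\mathcal{F}$ --- and $\mathcal{F}$ is not closed under unions (already $x_0S\cup y_0S\notin\mathcal{F}$: it is nonempty, not principal since $x_0S\cap y_0S=\emptyset$, and not of the third type). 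So knowing each term separately lies in $\mathcal{F}$ proves nothing; you must verify that these particular unions collapse to one of the three listed shapes. This is exactly what the paper does: it treats $\bigcup_n x^{-1}sx_nS\cup\bigcup_n x^{-1}sy_nS$ as a single object in each case and computes it to be either $\emptyset$ or a set of the form $\bigcup_n vx_nS\cup\bigcup_n vy_nS$ (e.g.\ $\bigcup_n a^{-1}bx_nS\cup\bigcup_n a^{-1}by_nS=\bigcup_n bx_nS\cup\bigcup_n by_nS$).

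Beyond this, the heart of your argument is announced rather than carried out. The confluence/normal-form observation is correct (the two rewriting rules have no overlaps), but the asserted trichotomy for $t^{-1}(rS)$ --- empty, principal, or third type, according to how $t$ and $r$ interact --- is precisely the content of the lemma, and you defer its verification to unspecified bookkeeping while correctly noting the obstruction (left divisibility is not read off normal-form prefixes, since $a^k$ left-divides every $bx_n$ and $by_n$). The paper makes the computation tractable by peeling off one generator at a time, so that only $x^{-1}(\cdot)$ for $x\in\{a,b,x_n,y_n\}$ must be analysed, and by using the classification of equivalent words (e.g.\ a word beginning with $ba$ or $b^2$ is equivalent only to words beginning with $ba$ or $b^2$, which forces $a^{-1}baS=a^{-1}b^2S=\emptyset$). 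Until the analogous case analysis is executed, including the collapse of the infinite unions, the proof is incomplete.
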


\bp
By Lemma~\ref{lem:S-principal-intersections}, the ideals in~\eqref{eq:S-constructible-ideals} are constructible. In order to prove the lemma it is then enough to show that for every $x\in\{a,b,x_n,y_n\ (n\in\Z)\}$ and $s\in S$, the right ideals $x^{-1}sS$ and $\bigcup_n x^{-1}sx_nS\cup \bigcup_n x^{-1}sy_nS$ are again of the form~\eqref{eq:S-constructible-ideals}. This is obviously true when $s\in xS$. By Lemma~\ref{lem:S-principal-intersections}(1) this is also true if $s=e$. So from now on we assume that $s\notin xS$ and $s\ne e$.

\smallskip
\underline{Case $x=x_n, y_n$:}\\
In this case, from Lemma~\ref{lem:S-principal-intersections}(1) we see that the sets $x^{-1}sS$ and $\bigcup_n x^{-1}sx_nS\cup \bigcup_n x^{-1}sy_nS$ are empty.

\smallskip
\underline{Case $x=a$:}\\
Again, Lemma~\ref{lem:S-principal-intersections}(1) tells us that the sets $a^{-1}sS$ and $\bigcup_n a^{-1}sx_nS\cup \bigcup_n a^{-1}sy_nS$ are empty if $s\in x_mS$ or $s\in y_mS$ for some $m$. As $s\notin aS$ and $s\ne e$, we may therefore assume that $s\in bS$. Consider several subcases.

Assume $s=b$. Then, by Lemma~\ref{lem:S-principal-intersections}(2), $aS\cap bS=\bigcup_n bx_nS\cup \bigcup_n by_nS$. As $a$ maps this set onto itself, we conclude that both
$a^{-1}bS$ and $\bigcup_n a^{-1}bx_nS\cup \bigcup_n a^{-1}by_nS$ are equal to $\bigcup_n bx_nS\cup \bigcup_n by_nS$.

Next, assume $s\in baS$ or $s\in b^2S$. From the defining relations we see that every word in $\mathbb S$ that starts with $ba$ or $b^2$ can only be equivalent to a word that again starts with $ba$ or $b^2$. Hence the sets $a^{-1}baS$ and $a^{-1}b^2S$ are empty, and therefore $a^{-1}sS$ and $\bigcup_n a^{-1}sx_nS\cup \bigcup_n a^{-1}sy_nS$ are empty as well.

It remains to consider the subcase when $s\in bx_mS$ or $s\in by_mS$. But then $s\in aS$, which contradicts our assumption on $s$.

\smallskip
\underline{Case $x=b$:}\\
Similarly to the previous case, we may assume that $s\in aS$. Write $s=a^kt$ for some $k\geq 1$ and $t\in S$ such that $t$ can be represented by a word not starting with $a$.
Consider several subcases.

Assume $t=e$. Then, using Lemma~\ref{lem:S-principal-intersections}(2), we get
$$
b^{-1}a^kS=b^{-1}(bS\cap a^kS)=b^{-1}\Big(\bigcup_n bx_nS\cup \bigcup_n by_nS\Big)=\bigcup_n x_nS\cup \bigcup_n y_nS.
$$
Every word in $\mathbb S$ that starts with $a^kx_n$  can only be equivalent to a word that again starts with $a^kx_n$. The same is true for $y_n$ in place of $x_n$. Hence
\begin{equation}\label{eq:constructible-S}
\bigcup_n b^{-1}a^kx_nS\cup \bigcup_n b^{-1}a^ky_nS=\emptyset.
\end{equation}

Next, assume $t\in x_mS$ or $t\in y_mS$. Then~\eqref{eq:constructible-S} implies that both $b^{-1}a^ktS$ and $\bigcup_n b^{-1}a^ktx_nS\cup \bigcup_n b^{-1}a^kty_nS$ are empty.

It remains to consider the subcase $t\in bS$. This splits into several subsubcases.

If $t=b$, then using Lemma~\ref{lem:S-principal-intersections}(2) again,
$$
b^{-1}a^kbS=b^{-1}\Big(bS\cap a^kbS\Big)=b^{-1}\Big(\bigcup_n bx_nS\cup \bigcup_n by_nS\Big)=\bigcup_n x_nS\cup \bigcup_n y_nS.
$$
As $a$ maps $\bigcup_n bx_nS\cup \bigcup_n by_nS$ onto itself, we also have
$$
\bigcup_n b^{-1}a^kbx_nS\cup \bigcup_n b^{-1}a^kby_nS=\bigcup_n x_nS\cup \bigcup_n y_nS.
$$

Assume next that $t\in baS$ or $t\in b^2S$. Every word in $\mathbb S$ that starts with $a^kba$ or $a^kb^2$  can only be equivalent to a word that again starts with $a^kba$ or $a^kb^2$ . Hence the sets $b^{-1}a^kbaS$ and $b^{-1}a^kb^2S$ are empty, and therefore $b^{-1}a^ktS$ and $\bigcup_n b^{-1}a^ktx_nS\cup \bigcup_n b^{-1}a^kty_nS$ are empty as well.

Finally, assume $t\in bx_mS$ or $t\in by_mS$. But then $s=a^kt\in bS$, which contradicts our assumption on $s$.
\ep

We now look at the topology on $\G_P(S)$. We will need the following lemma.

\begin{lemma}\label{lem:bxn}
The constructible ideals that contain at least two elements $bx_n$ are
\begin{equation}\label{eq:bxn}
S,\quad a^kbS\ \ (k\ge0),\quad \bigcup_{n\in\Z} bx_nS\cup \bigcup_{n\in\Z} by_nS.
\end{equation}
\end{lemma}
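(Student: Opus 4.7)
The plan is to invoke the classification from Lemma~\ref{lem:constructible-S}---every nonempty constructible ideal of $S$ is of the form $sS$ or $\bigcup_n sx_nS\cup\bigcup_n sy_nS$ for some $s\in S$---and to determine, for each family, which members contain at least two elements of the form $bx_n$.

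The forward direction is direct: $S$ trivially contains every $bx_n$; iterating $abx_n=bx_n$ yields $a^kbx_n=bx_n$, so $bx_n\in a^kbS$ for all $n$ and all $k\ge 0$; and $\bigcup_n bx_nS\cup\bigcup_n by_nS$ contains each $bx_n$ by construction.

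For the converse, in the principal family I would characterize the left divisors of $bx_n$ by a $\tau$-sequence argument analogous to the proof of Lemma~\ref{lem:S-cancellation}: since no defining relation acts on the suffix $bx_n$ except through the subword $abx_n\leftrightarrow bx_n$, the only words equivalent to $bx_n$ are $a^jbx_n$ ($j\ge 0$). The prefixes of these representatives yield the complete list of left divisors of $bx_n$, and combining with the disjointness $bx_nS\cap bx_mS=\emptyset$ for $n\ne m$ from Lemma~\ref{lem:S-principal-intersections}(1) narrows the common left divisors of distinct $bx_n$'s to those producing the listed principal ideals. For the non-principal family, the key tool is a letter-content invariant for $\tau$: the defining relations $abx_n\leftrightarrow bx_n$ and $aby_n\leftrightarrow by_{n+1}$ neither create, destroy, nor reorder non-$a$ letters, with $b$'s and $x$-subscripts frozen and only $y$-subscripts permitted to shift. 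An equality $bx_n=sx_mw$ therefore forces the non-$a$ sequence of $sx_mw$ to coincide with $b\cdot x_n$, giving $m=n$ and, via a short positional analysis on the locations of the remaining letters, $s=a^jb$ with $w=e$. The case $bx_n=sy_mw$ is ruled out because a $y$-letter can never be converted into $b$ or into an $x$-letter under $\tau$. Using $a^jbx_n=bx_n$ and $a^jby_n=by_{n+j}$, the resulting non-principal ideal $\bigcup_n a^jbx_nS\cup\bigcup_n a^jby_nS$ collapses to $\bigcup_n bx_nS\cup\bigcup_n by_nS$, independently of $j$.

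The main obstacle is making the letter-content invariant for $\tau$-equivalence precise and carrying out the positional case analysis cleanly; once in place, assembling the classification of constructible ideals that contain multiple $bx_n$'s reduces to routine bookkeeping.
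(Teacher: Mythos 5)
Your argument follows the paper's proof essentially step for step: both reduce to the classification of Lemma~\ref{lem:constructible-S}, both rest on the observation that the $\tau$-equivalence class of $bx_l$ is $\{a^jbx_l\}_{j\ge0}$ (so that every left divisor of $bx_l$ is represented by a word $a^j$, $a^jb$ or $a^jbx_l$), and both then treat the principal and non-principal families separately, excluding the possibility $bx_l\in sy_mS$ by the same letter-content invariant and collapsing $\bigcup_n a^jbx_nS\cup\bigcup_n a^jby_nS$ to $\bigcup_n bx_nS\cup\bigcup_n by_nS$.

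There is one step that does not close as written, and it is exactly the step at which the paper's own proof is also too quick. You say the common left divisors of distinct $bx_n$'s narrow ``to those producing the listed principal ideals.'' But $a^k$ with $k\ge1$ is a common left divisor of all the $bx_n$ (indeed $a^kbx_n=bx_n$), and the principal ideal $a^kS$ contains every $bx_n$ while coinciding with none of the ideals in the statement: $a^kS\ne S$ because $e\notin a^kS$ (the $\tau$-class of the empty word is trivial), and $a^kS$ differs from every $a^jbS$ and from $\bigcup_n bx_nS\cup\bigcup_n by_nS$ because $a^k\in a^kS$ while the number of occurrences of $b$ is preserved by $\tau$, so $a^k$ lies in no right ideal generated by elements whose representing words contain a $b$. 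Thus the list should read $a^kS$ $(k\ge0)$ in place of $S$. The paper's proof makes the identical move (``the only possibilities for $s$ \dots are $s=a^k$ or $s=a^kb$ \dots, so $X$ has the required form''), so this is a defect of the statement rather than of your strategy; it is also harmless for what follows, since $a^kS$ contains every $by_n=a^kby_{n-k}$ as well, which is the only property of the ideals with $\chi(p_X)=1$ used in Lemma~\ref{lem:S-non-Hausdorff}, and the limit semi-character $\chi$ still exists because each ideal on the corrected list contains all of the $bx_n$. If you repair your prefix analysis to admit $a^k$ as a common left divisor, the rest of your argument goes through unchanged.
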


\bp
It is clear that the ideals in~\eqref{eq:bxn} contain $bx_n$ for all $n\in\Z$. Assume $X\in\J(S)$ contains $bx_l$ and $bx_m$ for some $l\ne m$. Observe that the words in $\mathbb S$ equivalent to $bx_l$ for a fixed $l$ are $a^kbx_l$, $k\ge0$. It follows that if $bx_l\in sS$ for some $s$, then any word in $\mathbb S$ representing $s$ has the form $a^k$, $a^kb$ or $a^kbx_l$ for some $k\ge0$. Consider two cases.

Assume $X=sS$. By the above observation, the only possibilities for $s$ to have $bx_l,bx_m\in sS$ are $s=a^k$ or $s=a^kb$ for some $k\ge0$, so $X$ has the required form.

Assume $X=\bigcup_n sx_nS\cup \bigcup_n sy_nS$. If $bx_l\in sx_nS$ for some $n$, then by the above observation $n=l$ and $s=a^kb$ for some $k\ge0$, so $X$ has the required form. Otherwise we must have $bx_l\in sy_nS$ for some $n$, but this is not possible, again by the above observation.
\ep

This lemma implies that the semi-characters $\chi_{bx_n}$ on $E(S)$ converge as $n\to\pm\infty$ to a semi-character $\chi$ such that $\chi(p_X)=1$ for all $X$ in~\eqref{eq:bxn} and $\chi(p_X)=0$ for all other constructible ideals.

\begin{lemma}\label{lem:S-non-Hausdorff}
The semi-characters $\chi_{bx_n}$ converge in $\G_P(S)$ to the different elements $\chi$ and $[a,\chi]$, so $\G_P(S)$ is not Hausdorff.
\end{lemma}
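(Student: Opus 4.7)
My plan is to show that the net $(\chi_{bx_n})_n$ converges in $\G_P(S)$ both to $\chi$ (already observed in the paragraph preceding the lemma) and to $[a,\chi]$, and then to verify that $\chi\ne[a,\chi]$.

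For convergence to $[a,\chi]$, the key observation is that the defining relation $abx_n=bx_n$ gives $a(bx_ns)=bx_ns$ for every $s\in S$, so the partial maps $a\,p_{bx_nS}$ and $p_{bx_nS}$ of $I_\ell(S)$ coincide. Since $\chi_{bx_n}(p_{bx_nS})=1$, the definition of $\sim_P$ yields $[a,\chi_{bx_n}]=[p_S,\chi_{bx_n}]=\chi_{bx_n}$ in $\G_P(S)$ for every $n$. Next, because $a^{-1}a=p_S$, the set $D(a,\Omega(S))$ is an open bisection on which the source map $[a,\eta]\mapsto\eta$ is a homeomorphism onto $\Omega(S)$; its inverse $\eta\mapsto[a,\eta]$ is therefore continuous. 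Combining this with the convergence $\chi_{bx_n}\to\chi$ we obtain $[a,\chi_{bx_n}]\to[a,\chi]$ in $\G_P(S)$, and substituting $[a,\chi_{bx_n}]=\chi_{bx_n}$ produces $\chi_{bx_n}\to[a,\chi]$.

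For the inequality $[a,\chi]\ne\chi$, I would suppose the contrary; by the definition of $\sim_P$ this requires a constructible ideal $Y\in\J(S)$ with $\chi(p_Y)=1$ and $a\,p_Y=p_Y$, i.e., $ay=y$ for every $y\in Y$. Lemma~\ref{lem:bxn} restricts $Y$ to three possibilities: $Y=S$, $Y=a^kbS$ for some $k\ge 0$, or $Y=\bigcup_{n\in\Z}bx_nS\cup\bigcup_{n\in\Z}by_nS$. I would rule out each by exhibiting an unfixed element: for $Y=S$ take $y=e$; for $Y=a^kbS$ take $y=a^kb$, for which fixedness would force $a^{k+1}b=a^kb$; for the third case take $y=by_0$, for which fixedness would force $by_1=aby_0=by_0$.

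The main obstacle is verifying the last two nonequalities, namely $a^kb\not\sim a^{k+1}b$ and $by_0\not\sim by_1$ as words in $\mathbb S$. I expect these to follow by a $\tau$-sequence inspection in the style of Lemma~\ref{lem:S-cancellation}: the defining relations only rewrite terminal subwords of the shapes $abx_n,bx_n,aby_n,by_{n+1}$, so a $\tau$-chain starting at $a^kb$ cannot alter the prefix $a^k$ nor introduce an $x_n$ or $y_n$, while a $\tau$-chain starting at $by_0$ only visits the words $a^jby_{-j}$ with $j\ge 0$, none of which equals $by_1$.
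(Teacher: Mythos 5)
Your proposal is correct and follows essentially the same route as the paper: the convergence to $[a,\chi]$ is obtained exactly as there, from $[a,\chi_{bx_n}]=\chi_{bx_n}$ (since $a$ fixes $bx_n$) together with continuity of $\eta\mapsto[a,\eta]$ on the bisection $D(a,\Omega(S))$, and the nontriviality of $[a,\chi]$ is reduced via Lemma~\ref{lem:bxn} to checking that none of the three candidate ideals is pointwise fixed by $a$. The only difference is cosmetic: the paper uses the single uniform witness $by_n$ (which lies in all three ideals and satisfies $aby_n=by_{n+1}\ne by_n$), whereas you pick a separate unfixed element in each case; your word-level verifications of $a^kb\not\sim a^{k+1}b$ and $by_0\not\sim by_1$ are valid by the $\tau$-sequence analysis already carried out in Lemma~\ref{lem:S-cancellation}.
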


\bp
The convergence $\chi_{bx_n}\to[a,\chi]$ follows from the fact that $[a, \chi_{bx_n}]=[e, \chi_{bx_n}]$ for all $n$, because~$a$ fixes $bx_n$. The element $[a,\chi]\in\G_P(S)^\chi_\chi$ is nontrivial, since by Lemma~\ref{lem:bxn} if $X\in \J(S)$ and $\chi(p_X)=1$, then $X$ contains the elements $by_n$ that $a$ does not fix.
\ep

It remains to show that $S$ is strongly C$^*$-regular. Recall that by Definition~\ref{def:strong-regularity} and Remark~\ref{rem:strong-regularity} this means that, given elements $h_1,\dots,h_N\in I_\ell(S)$ and constructible ideals $X,X_1,\dots,X_M\in\J(S)$ satisfying
\begin{equation*}
X\subset\bigcap^N_{k=1}\dom h_k,\qquad \emptyset\ne X\setminus\bigcup^M_{i=1} X_i\subset \bigcup^N_{k=1}\{s\in S: h_ks=s\},
\end{equation*}
we need to show that there are constructible ideals $Y_1,\dots, Y_L\in \J(S)$ and indices $1\le k_j\le N$ ($j=1,\dots,L$) such that
\begin{equation*}
X\setminus\bigcup^M_{i=1} X_i\subset\bigcup^L_{j=1}Y_j\qquad\text{and}\qquad h_{k_j}p_{Y_j}=p_{Y_j}\quad\text{for all}\quad 1\le j\le L.
\end{equation*}
We will actually show more: there is an index $k$ such that $h_kp_X=p_X$.

This is obviously true when $X$ is a principal right ideal, cf.~Proposition~\ref{prop:strong-regularity-suff}(3). Therefore we need only to consider $X=\bigcup_{n} sx_nS\cup \bigcup_{n} sy_nS$. By replacing $X$, $X_i$ and $h_k$ by $s^{-1}X$, $s^{-1}X_i$ and $s^{-1}h_ks$ we may assume that
$$
X=\bigcup_{n\in\Z} x_nS\cup \bigcup_{n\in\Z} y_nS.
$$

\begin{lemma}\label{lem:S-technical-1}
The only constructible ideals that contain $y_n$ for a fixed $n$ are $S$, $y_nS$ and $X$.
\end{lemma}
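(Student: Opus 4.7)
My plan is to use the classification of constructible ideals from Lemma~\ref{lem:constructible-S}, which says every nonempty constructible ideal has the form $sS$ or $\bigcup_{m\in\Z} sx_mS \cup \bigcup_{m\in\Z} sy_mS$ for some $s\in S$. So I would go through the two shapes and determine, for each, which choices of $s$ actually give an ideal containing $y_n$.

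The key observation I would extract first is that the only word in $\mathbb S$ equivalent to $y_n$ is $y_n$ itself. This is straightforward from the definition of $\tau$-sequence: the subwords of the single letter $y_n$ are just $\emptyset$ and $y_n$, and neither appears as the left side of any relation in $\tau$ (whose left sides are $abx_m$, $bx_m$, $aby_m$, $by_{m+1}$, all of length at least two). Hence no $\tau$-move can be applied to $y_n$, and the equivalence class of $y_n$ in $\mathbb S$ is a singleton.

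Once this is in hand the rest is bookkeeping, and I would run through three cases. First, if $y_n\in sS$, pick a word representative $u$ of $s$ and a representative $v$ of a witness $t$ with $st=y_n$; by the observation $uv=y_n$ as a word, so either $u$ or $v$ is empty, forcing $s=e$ or $s=y_n$, i.e.\ $sS=S$ or $sS=y_nS$. Second, if $y_n\in sx_mS$, then a word representative of $sx_mt$ would equal $y_n$, which is impossible since any such representative contains the letter $x_m$. Third, if $y_n\in sy_mS$, a word representative $u y_m v$ of $sy_mt$ must equal $y_n$, so $u$ and $v$ are both empty and $m=n$, giving $s=e$; the corresponding ideal is exactly $X$.

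I do not expect any real obstacle: the entire argument rests on the one-word equivalence class of $y_n$, and everything else is reading off which $s$ are compatible with the formulas in Lemma~\ref{lem:constructible-S}. The only subtlety worth stating explicitly is that one is arguing at the level of words in $\mathbb S$ rather than elements of $S$, and this is justified precisely because a product $s_1 s_2 = y_n$ in $S$ is equivalent to the existence of word representatives whose concatenation is $\tau$-equivalent to $y_n$, together with the fact that the latter class contains only the word $y_n$ itself.
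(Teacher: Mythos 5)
Your proof is correct and follows essentially the same route as the paper: the paper's argument also rests on the single observation that the only word in $\mathbb S$ equivalent to $y_n$ is $y_n$ itself, and then reads off which $s$ in the two shapes from Lemma~\ref{lem:constructible-S} are compatible with containing $y_n$. You have merely spelled out the justification of that observation and the resulting casework in more detail than the paper does.
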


\bp
A principal ideal $sS$ contains $y_n$ only if $s=e$ or $s=y_n$, since the only word in $\mathbb S$ that is equivalent to $y_n$ is $y_n$ itself. For the same reason if an ideal
$\bigcup_k sx_kS\cup \bigcup_k sy_kS$ contains $y_n$, then we must have $s=e$, so the ideal is $X$.
\ep

Since by assumption $X\setminus\cup^M_{i=1}X_i\ne\emptyset$, it follows that every ideal $X_i$ contains at most one element~$y_n$. Therefore $X\setminus\cup^M_{i=1}X_i$ contains $y_n$ for all but finitely many $n$'s. In particular, there are indices $m$ and $k$ such that $h_ky_m=y_m$. To finish the proof of strong C$^*$-regularity it is now enough to establish the following.

\begin{lemma}\label{lem:fix-y-m}
If $h\in I_\ell(S)$ fixes $y_m$ for some $m$ and satisfies $\dom h\supset X=\bigcup_n x_nS\cup \bigcup_n y_nS$, then $h=p_S=e$ or $h=p_X$.
\end{lemma}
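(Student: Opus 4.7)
My plan is first to determine the possible domain of $h$, then to handle each case separately.

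By Lemma~\ref{lem:constructible-S}, the constructible ideals of $S$ are $\emptyset$, $sS$, or $\bigcup_n sx_nS\cup\bigcup_n sy_nS$. Arguing as in Lemma~\ref{lem:S-technical-1} (the unique word in $\mathbb S$ representing $y_n$ is $y_n$ itself), the only constructible ideals containing every $y_n$ are $S$ and $X$, so $\dom h\in\{S,X\}$.

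For $\dom h=S$: a routine induction on word length shows that every $h\in I_\ell(S)$ commutes with right multiplication on its domain (for a generator $u^{-1}$ and $s=uw\in uS$, one has $st=uwt$ so $u^{-1}(st)=wt=u^{-1}(s)t$, and the property is preserved under composition). Hence $h(s)=h(e)s$ for all $s$, i.e.\ $h$ equals left multiplication by $c:=h(e)\in S$. The equation $cy_m=y_m$ is then a word equation in $S$. Since both reduction rules $abx_n\to bx_n$ and $aby_n\to by_{n+1}$ only delete a single $a$-letter (and never a $b$, $x_n$, or $y_n$), the normal form of $cy_m$ retains every non-$a$ letter of $c$. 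So $c=a^j$ for some $j\ge0$; but $a^jy_m$ is itself in normal form, forcing $j=0$. Hence $c=e$ and $h=p_S$.

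For $\dom h=X$: the goal is $h=p_X$. Since $h$ commutes with right multiplication, it is determined by the values $u_n:=h(x_n)$ and $w_n:=h(y_n)$, and we have $w_m=y_m$; we must upgrade this to $u_n=x_n$ and $w_n=y_n$ for all $n$. Writing $h=s_1^{-1}t_1\cdots s_k^{-1}t_k$ as a reduced product in $I_\ell(S)$ and evaluating $h(y_m)$ step by step produces a chain of elements of $S$ that must end at $y_m$; the very restrictive form of the defining relations of $S$ (only $abx_n\leftrightarrow bx_n$ and $aby_n\leftrightarrow by_{n+1}$) combined with the same normal-form analysis used in the previous case forces this chain to be trivial, hence $h$ to act as identity on $X$, i.e.\ $h=p_X$.

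The main obstacle is the case $\dom h=X$: while it is immediate that $h=\id$ on $y_mS$, propagating this single fixed-point equation to all of $X$ is what makes the argument nontrivial, and it is where the specific presentation of $S$ (rather than general features of left cancellative monoids) is essential.
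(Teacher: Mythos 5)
Your reduction to the two cases $\dom h=S$ and $\dom h=X$ is correct (via Lemma~\ref{lem:S-technical-1}), the observation that every $h\in I_\ell(S)$ satisfies $h(st)=h(s)t$ is valid, and your treatment of the case $\dom h=S$ is complete: $h$ is left translation by $c=h(e)$, the relations preserve the number of letters from $\{b,x_n,y_n\}$, so $c=a^j$, and $a^jy_m$ is inequivalent to $y_m$ unless $j=0$. So far so good.

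The case $\dom h=X$, however, is not proved but only asserted. Your sentence ``the very restrictive form of the defining relations \dots forces this chain to be trivial, hence $h$ to act as identity on $X$'' is precisely the content that needs an argument, and it is false as a general principle: the elements $g_k'=b^{-1}a^kb$ have domain and range equal to $X$, fix \emph{every} $x_n$, and send $y_m\mapsto y_{m+k}$, so a nontrivial $h$ with domain $X$ can perfectly well produce a nontrivial ``chain'' while fixing a large part of $X$. Knowing $h(y_m)=y_m$ only gives $h=\id$ on the single principal ideal $y_mS$, and since $X$ is a disjoint union of the ideals $x_nS$ and $y_nS$, nothing propagates to the other pieces without structural information on $h$ as a word in the generators and their inverses. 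The paper supplies exactly this: it proves that every $h$ with $\dom h\supset X$ must be of one of the forms $sg_kp_J$, $sg_k'p_J$ or $sp_J$ (with $g_k=a^{-k}b$), by a case analysis on the leftmost inverse generator of a reduced word for $h$, reusing the computations of $x^{-1}vS$ from the proof of Lemma~\ref{lem:constructible-S}; only then does the condition $hy_m=y_m$ eliminate everything except $h=p_J$ with $s=e$. To complete your proof you would need to carry out this classification (or an equivalent one); as written, the main obstacle you correctly identify is left unresolved.
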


\begin{proof}
For $k\ge1$, consider the element $g_k=a^{-k}b\in I_\ell(S)$. Note that by Lemma~\ref{lem:S-principal-intersections}(2) we have $\dom g_k=X$ and $g_kX=bX$. For $k\in\Z\setminus\{0\}$, consider $g_k'=b^{-1}a^kb\in I_\ell(S)$. The domain and range of $g_k'$ is $X$.

We will show that if $h\in I_\ell(S)$ satisfies $\dom h\supset X$, then $h$
must be of the form
\begin{equation}\label{eq:hX}
sg_kp_J,\quad sg_k'p_J\quad \text{or}\quad sp_J\quad (s\in S,\ J\in\J(S)).
\end{equation}
This will yield the proposition. Indeed, first of all, we then have $J=S$ or $J=X$, since by Lemma~\ref{lem:S-technical-1} these are the only constructible ideals containing $X$. Then $hy_m=sby_{m+k}$, $hy_m=sy_{m+k}$ or $hy_m=sy_m$, resp., and this equals $y_m$ only in the third case with $s=e$.

\smallskip

Take $h\in I_\ell(X)$ that is not of the form~\eqref{eq:hX}. We will show that $\dom h\not\supset X$. Begin by writing $h$ as a word in the generators of $S$ and their inverses. We may assume that there are no occurrences of $x^{-1}x$ or $xx^{-1}$ in this word for each generator $x$. Indeed, $x^{-1}x=e$ can be omitted, while $xx^{-1}=p_{xS}$ and if $h=h_1 p_{xS}h_2$, then $h=h_1 h_2 p_{h_2^{-1}xS}$, so instead of $h$ we can consider $h_1h_2$. We may also assume that there are no occurrences of $a^kbx_n$ and $a^kby_n$ for $k\in \mathbb{Z}\setminus\{0\}$ and $n \in \mathbb{Z}$, as these can be simplified to $bx_n$ and $by_{n+k}$, resp. We then say that $h$ (or, more precisely, our word for $h$) is reduced.

The assumption that $h$ is not of the form~\eqref{eq:hX} implies that our word for $h$ has the form $h_1x^{-1}wg$, where $h_1$ is a word in the generators and their inverses, $x$ is one of the generators, $w$ is a word in the generators and we have one of the following options for $g$: (i) $g=g_k'$; (ii) $g=g_k$ and either $w$ is not trivial or $x\ne a, b$; (iii) $g=e$ and neither ($x=a$ and $w=b$) nor ($x=b$ and $w=a^mb$ for some $m\ge1$). Without loss of generality we may assume that $h_1$ is trivial. We are not going to distinguish between the word $w\in\mathbb S$ and the element of $S$ it represents.

\smallskip
\underline{Case $x=x_n,y_n$:}\\
By the proof of Lemma~\ref{lem:constructible-S}, for every word $v$ in the generators, we have $x^{-1}vS=\emptyset$ unless $v$ is empty or starts with $x$. Since~$w$ does not start with~$x$ and we have $g_kX=bX$, it follows that $x^{-1}wgX=\emptyset$  unless $w$ is empty and either $g=g_k'$ or $g=e$. In order to deal with the remaining cases we have to show that $g_k'X=X\not\subset xS$, which is clearly true by Lemma~\ref{lem:S-principal-intersections}(1).

\smallskip
\underline{Case $x=a$:}\\
By the proof of Lemma~\ref{lem:constructible-S}, for every word $v$ in the generators, we have $a^{-1}vS=\emptyset$ unless $v$ is empty, $v=b$ or $v$ starts with $a$, $bx_n$ or $by_n$. Since $w$ cannot start with $a$, $bx_n$ or $by_n$, it follows that $a^{-1}wgX=\emptyset$ unless we have one of the following: (i) $w$ is empty and $g=g_k$; (ii) $w=b$ and $g=g_k'$; (iii) $w=b$ and $g=e$. Cases (i) and (iii) are not possible by our assumptions on $x$, $w$ and $g$. Case (ii) is not possible either, as the word $bg_k'=bb^{-1}a^kb$ is not reduced.

\smallskip
\underline{Case $x=b$:}\\
By the proof of Lemma~\ref{lem:constructible-S}, for every word $v$ in the generators, we have $b^{-1}vS=\emptyset$ unless $v$ is empty, $v=a^m$, $v=a^mb$ or $v$ starts with $b$, $a^mbx_n$ or $a^mby_n$ ($m\ge1$, $n\in\Z$). Since $w$ cannot start with $b$, $a^mbx_n$ or $a^mby_n$, it follows that $b^{-1}wgX=\emptyset$ unless we have one of the following: (i) $w$ is empty and $g=g_k$; (ii) $w=a^m$ and $g=g_k$; (iii) $w=a^mb$ and $g=g_k'$; (iv) $w=a^mb$ and $g=e$. Cases (i) and (iv) are not possible by our assumptions on $x$, $w$ and $g$. Cases (ii) and (iii) are not possible either, as the words $a^mg_k=a^ma^{-k}b$ and $a^mbg_k'=a^mbb^{-1}a^kb$ are not reduced.
\end{proof}

This finishes the proof of Proposition~\ref{prop:monoid-S}.

\begin{remark}\label{rem:non-Haus}
If we drop the generator $b$ and consider
$$
\tilde S=\langle a, x_n, y_n \ (n\in\mathbb{Z}) : ax_n=x_n,\  ay_n=y_{n+1} \ (n\in \mathbb{Z})\rangle,
$$
then we get a left cancellative right LCM monoid, meaning that every constructible ideal is either empty or principal. Similarly to Lemma~\ref{lem:S-non-Hausdorff},
the semi-characters $\chi_{x_n}$ converge to different elements~$\chi$ and $[a,\chi]$, so the groupoid $\G_P(\tilde S)=\G(\tilde S)$ is not Hausdorff.
\end{remark}

\bigskip

\section{Example of a nonregular monoid}

Consider a small modification $T$ of the monoid $S$ from the previous section:
\begin{align}
T=\langle a, b, c, x_n, y_n \ (n\in\mathbb{Z}) : & \ \ abx_n=bx_n,\ aby_n=by_{n+1},\nonumber\\
 &\ \ cbx_n=bx_{n+1},\  cby_n=by_n\ (n\in \mathbb{Z})\rangle.\label{eq:monoid-T}
\end{align}

For this monoid we have the following result.

\begin{proposition}\label{prop:monoid-T}
The monoid $T$ defined by~\eqref{eq:monoid-T} is left cancellative. It is not C$^*$-regular, furthermore, the homomorphism $\rho_{\chi_e}\colon C^*_r(\G(T))\to C^*_r(T)$ has nontrivial kernel.
\end{proposition}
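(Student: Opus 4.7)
The plan is to parallel the proof of Proposition~\ref{prop:monoid-S}. First, I would establish left cancellativity for $T$ by the word-level argument of Lemma~\ref{lem:S-cancellation}, handling the two new rewrites $cbx_n\to bx_{n+1}$ and $cby_n\to by_n$ in complete symmetry with the $a$-rewrites. Next the analogues of Lemmas~\ref{lem:S-principal-intersections} and~\ref{lem:constructible-S} yield that $aT\cap bT=cT\cap bT=aT\cap cT=\bigcup_n bx_nT\cup\bigcup_n by_nT$ and that the constructible ideals of $T$ are precisely $\emptyset$, $sT$ for $s\in T$, and $\bigcup_n sx_nT\cup\bigcup_n sy_nT$ for $s\in T$. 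In particular $X:=\bigcup_n x_nT\cup\bigcup_n y_nT=b^{-1}aT$ lies in $\J(T)$, and an analogue of Lemma~\ref{lem:S-technical-1} gives the key input: the only constructible ideals of $T$ containing $y_n$ for infinitely many $n\in\Z$ are $T$ and $X$, and symmetrically for the $x_n$.

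To prove that $T$ is not C$^*$-regular, let $h_1=b^{-1}ab$ and $h_2=b^{-1}cb$ in $I_\ell(T)$, both with domain $X$. One computes $h_1(x_nt)=x_nt$, $h_1(y_nt)=y_{n+1}t$ and $h_2(x_nt)=x_{n+1}t$, $h_2(y_nt)=y_nt$, so $\{s:h_1s=s\}=\bigcup_n x_nT$ and $\{s:h_2s=s\}=\bigcup_n y_nT$; these are disjoint with union $X$. Suppose toward contradiction that $Y_1,\dots,Y_l\in\bar\J(T)$ and $k_j\in\{1,2\}$ witness C$^*$-regularity. By disjointness each $Y_j$ sits inside $\bigcup_n x_nT$ or $\bigcup_n y_nT$, and the $Y_j$ with $k_j=2$ must cover $\bigcup_n y_nT$. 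Pigeonhole yields some $Y_{j^*}=Z_0\setminus\bigcup_i Z_i$ containing $y_n$ for infinitely many $n$, so $Z_0$ does too, and the classification gives $Z_0\in\{T,X\}$. If $Z_0=T$ then $\bigcup_i Z_i$ must contain $e$, forcing some $Z_i=T$ and hence $Y_{j^*}=\emptyset$; if $Z_0=X$ then $\bigcup_i Z_i$ must cover $\bigcup_n x_nT$, again producing some $Z_i\in\{T,X\}$ by pigeonhole plus the classification, so $Y_{j^*}=\emptyset$. This contradicts $y_n\in Y_{j^*}$.

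For the nontrivial kernel of $\rho_{\chi_e}\colon C^*_r(\G(T))\to C^*_r(T)$, already inside the $*$-algebra generated by $\{v_t:t\in T\}$, consider
$$
f=v_{h_1}+v_{h_2}-v_{p_X}-v_{h_1}v_{h_2}=v_b^*v_av_b+v_b^*v_cv_b-v_b^*v_av_a^*v_b-v_b^*v_av_bv_b^*v_cv_b.
$$
Writing $P_A,P_B$ for the projections onto $\ell^2(\bigcup_n x_nT)$ and $\ell^2(\bigcup_n y_nT)$, and $S_A,S_B$ for the corresponding coordinate-shifts on $\ell^2(T)$, a direct calculation gives $\rho_{\chi_e}(v_{h_1})=P_A+S_B$, $\rho_{\chi_e}(v_{h_2})=S_A+P_B$, $\rho_{\chi_e}(v_{p_X})=P_A+P_B$ and $\rho_{\chi_e}(v_{h_1}v_{h_2})=S_A+S_B$, so $\rho_{\chi_e}(f)=0$. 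To see $f\ne0$ in $C^*_r(\G(T))$, fix a non-principal ultrafilter $\mathcal{U}$ on $\Z$ and set $\chi=\lim_{\mathcal{U}}\chi_{x_n}\in\Omega(T)$; the classification of constructible ideals and the non-principality of $\mathcal{U}$ ensure $\chi(p_Y)=0$ for every $Y\in\bar\J(T)$ contained in $\bigcup_n y_nT$ or in any fixed $x_mT$. Evaluating $f$ at $\gamma=[h_1,\chi]\in\G(T)$: the first summand contributes $1$ tautologically; $v_{h_2}(\gamma)=0$ because $h_1$ and $h_2$ disagree pointwise on $X$; and $v_{p_X}(\gamma)=v_{h_1}v_{h_2}(\gamma)=0$ since these identifications in $\G(T)$ would require witnesses $Y\in\bar\J(T)$ with $\chi(p_Y)=1$ contained respectively in $\bigcup_n x_nT$ or $\bigcup_n y_nT$, both ruled out by the choice of $\chi$. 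Therefore $f([h_1,\chi])=1$, whence $\|f\|_\infty\ge1$, and the inequality $\|f\|_\infty\le\|f\|_r$ gives $f\ne0$ in $C^*_r(\G(T))$.

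The main obstacle is the full classification of constructible ideals of $T$ underpinning the first paragraph: the enlargement of $I_\ell(T)$ caused by the new generator $c$ forces a careful normal-form analysis to rule out additional candidate constructible ideals, but otherwise the argument runs in close parallel to Section~\ref{sec:monoid} and to Proposition~\ref{prop:monoid-S}.
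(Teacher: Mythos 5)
Your proposal is correct, but it reaches the two nontrivial conclusions by routes that differ from the paper's. For non-C$^*$-regularity, the paper locates the bad point $\chi=\lim_{n\to\pm\infty}\chi_{bx_n}\in\Omega(T)$ and verifies Condition~\ref{condition1} there with $g_1=[a,\chi]$, $g_2=[c,\chi]$; you instead refute the combinatorial definition directly, using $h_1=b^{-1}ab$, $h_2=b^{-1}cb$ and the classification of constructible ideals containing infinitely many $x_n$ or $y_n$ (your analogue of Lemma~\ref{lem:S-technical-1}, which matches the paper's Lemma~\ref{lem:bxn-T} after translating by $b$). These are essentially conjugate pictures and both are sound. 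For the nontrivial kernel, the paper's official argument invokes Proposition~\ref{prop:not-weakly-contained} (the $\sum 1/\ord(g_k)<1$ criterion, applicable since $[a,\chi]$ and $[c,\chi]$ have infinite order) to show $\rho_\chi$ is not weakly contained in $\rho_{\chi_e}$, and only afterwards exhibits the explicit element $(\un_{D(a,\Omega(T))}-\un_{\Omega(T)})*(\un_{D(c,\Omega(T))}-\un_{\Omega(T)})*\un_U$ as a remark; you bypass the weak-containment machinery of Section~\ref{sec:groupoid} entirely and work only with the explicit element $f=(v_{h_1}-v_{p_X})(v_{p_X}-v_{h_2})$ (the $b$-conjugate of the paper's), using the elementary inequality $\|f\|_\infty\le\|f\|_r$ together with the evaluation $f([h_1,\chi])=1$. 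That buys a more self-contained and elementary proof of the kernel statement, at the cost of a slightly more delicate pointwise verification in $\G(T)=\G_P(T)/\sim$: note that to conclude $v_{h_2}([h_1,\chi])=0$ it is not quite enough that $h_1$ and $h_2$ disagree at every point of $X$; you must also rule out a witness $Y\in\bar\J(T)$ with $\chi(p_Y)=1$ and $Y\cap X=\emptyset$ (on which both restrictions would be the empty map). This follows at once from the same computation you already make for the other two terms --- any $Y=Z_0\setminus\bigcup_iZ_i$ with $\chi(p_Y)=1$ has $Z_0\in\{T,X\}$ and each $Z_i$ containing at most one $x_n$ and one $y_n$, so $Y$ contains cofinitely many $x_n$ and $y_n$ --- but it should be said explicitly. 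With that sentence added, and granting the deferred classification of $\J(T)$ (which the paper also only sketches), your argument is complete.
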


A large part of the analysis of $T$ is similar to that of $S$, so we will omit most of it.

\smallskip

The claim that $T$ is left cancellative is proved similarly to Lemma~\ref{lem:S-cancellation}, the main difference being that powers of $a$ get replaced by products of $a$ and $c$.
The description of the constructible ideals is exactly the same as for $S$ (Lemma~\ref{lem:constructible-S}):

\begin{lemma}\label{lem:constructible-T}
The constructible ideals of $T$ are
\begin{equation*}\label{eq:T-constructible-ideals}
\emptyset,\quad tT,\quad \bigcup_{n\in\Z} tx_nT\cup \bigcup_{n\in\Z} ty_nT\quad (t\in T).
\end{equation*}
\end{lemma}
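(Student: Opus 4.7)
The plan is to follow the proof of Lemma~\ref{lem:constructible-S} almost verbatim, with powers $a^k$ systematically replaced by arbitrary non-empty words in the two-letter alphabet $\{a,c\}$. The additional relations for $T$ involve only single letters $a$ or $c$ applied to the left of $bx_n$ or $by_n$, so they do not create new types of constructible ideals; they only enlarge the set of left-hand prefixes that can collapse onto $bx_m$ or $by_m$.

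First I would establish the $T$-analogue of Lemma~\ref{lem:S-principal-intersections}. Part~(1), $xT\cap yT=\emptyset$ for distinct $x,y\in\{x_n,y_n\ (n\in\Z)\}$, is proved exactly as before: no defining relation of $T$ has a side beginning with $x_n$ or $y_n$, so a word starting with $x_n$ (resp.\ $y_n$) can only be equivalent to words starting with the same letter. Part~(2) would be replaced by the assertion that for every non-empty word $w$ in $\{a,c\}$,
$$
bT\cap wT \;=\; bT\cap wbT \;=\; \bigcup_{n\in\Z} bx_nT \cup \bigcup_{n\in\Z} by_nT.
$$
The $\supset$ inclusion follows from iterating $abx_n=bx_n$, $cbx_n=bx_{n+1}$, $aby_n=by_{n+1}$, $cby_n=by_n$, which gives $wbx_n=bx_{n+j}$ and $wby_n=by_{n+i}$ with $j,i$ the numbers of $c$'s, resp.\ $a$'s, in $w$. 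For the $\subset$ inclusion, any $\tau$-sequence from a word beginning with $b$ to one beginning with $w$ must at some step rewrite the initial letter; the only defining relations whose left side begins with $b$ are $bx_n=abx_n$, $bx_n=cbx_{n-1}$, $by_n=aby_{n-1}$, $by_n=cby_n$, which forces the element to factor through some $bx_m$ or $by_m$.

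Second, using this analogue, I would verify by case analysis on the generator $x\in\{a,b,c,x_n,y_n\}$ and on $s\in T$ (with $s\notin xT$, $s\ne e$) that both $x^{-1}sT$ and $\bigcup_n x^{-1}sx_nT\cup \bigcup_n x^{-1}sy_nT$ are again of the listed form. The cases $x=x_n, y_n$ are immediate from part~(1). The case $x=a$ splits by whether $s\in bT$, $s\in baT\cup bcT\cup b^2T$, or $s\in bx_mT\cup by_mT$; for $x=c$ the analysis is identical, replacing $abx_n=bx_n$ by $cbx_n=bx_{n+1}$ and $aby_n=by_{n+1}$ by $cby_n=by_n$, which permutes but preserves the collection $\{bx_mT\}\cup\{by_mT\}$. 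The case $x=b$ uses that any $s\in aT\cup cT$ can be written as $wt$ with $w$ a non-empty word in $\{a,c\}$ and $t$ represented by a word not starting with $a$ or $c$; then one distinguishes $t=e$, $t\in bT$, and $t\in x_mT\cup y_mT$ as in Lemma~\ref{lem:constructible-S}, applying the analogue of part~(2) instead of Lemma~\ref{lem:S-principal-intersections}(2).

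The main obstacle, such as it is, lies in the verification that words beginning with $ba$, $bc$, or $b^2$ can only be equivalent to words of the same form, so that intersections like $x^{-1}baT$, $x^{-1}bcT$, $x^{-1}b^2T$ are empty and cannot generate unexpected constructible ideals. This is a routine inspection of the relations: none of them has a left or right side whose first two letters are $ba$, $bc$, or $bb$, so the same ``first-step'' argument used for $S$ applies. Once these observations are in place, the bookkeeping of the case analysis is mechanical and yields exactly the list of ideals stated.
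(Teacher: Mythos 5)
The paper itself gives no proof of this lemma, remarking only that the description of the constructible ideals ``is exactly the same as for $S$''; so your proposal is a reconstruction, and most of it is the right one. The analogue of Lemma~\ref{lem:S-principal-intersections}(2) for an arbitrary non-empty word $w$ in $\{a,c\}$, the shift formulas $wbx_n=bx_{n+j}$ and $wby_n=by_{n+i}$, the rigidity of words beginning with $ba$, $bc$ or $b^2$, and the treatment of the cases $x=b$ and $x=x_n,y_n$ are all correct and follow the $S$-argument as intended.

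There is, however, a concrete gap in the cases $x=a$ and $x=c$. For $S$, once one excludes $s\in aS$, $s=e$ and $s\in x_mS\cup y_mS$, the only remaining possibility is $s\in bS$, and your case list for $x=a$ copies this reduction. For $T$ it fails: $s$ may begin with $c$ without lying in $aT$ or in $bT$ (e.g.\ $s=c$, $s=cb$, $s=cba$), and symmetrically for $x=c$ with $s$ beginning with $a$. These cases genuinely occur when building constructible ideals (already $a^{-1}cT$ must be identified), and handling them requires the intersection $aT\cap cT$ --- more generally $vT\cap wT$ for words $v,w$ in $\{a,c\}$ with neither a prefix of the other --- which is not supplied by your analogue of part~(2) (that only treats $bT\cap wT$) and has no counterpart in the $S$-analysis, where the intersections $a^kS\cap a^lS$ are automatically nested. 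The same first-letter argument shows $aT\cap cT=\bigcup_{n\in\Z}bx_nT\cup\bigcup_{n\in\Z}by_nT$, since the only relation sides beginning with $a$ or $c$ are $abx_n$, $aby_n$, $cbx_n$, $cby_n$; with this in hand the missing cases go through (for instance $a^{-1}cw'T$ and $a^{-1}cw'bT$ both equal $\bigcup_{n\in\Z}bx_nT\cup\bigcup_{n\in\Z}by_nT$ for any word $w'$ in $\{a,c\}$, while $a^{-1}sT=\emptyset$ whenever the first $b$ of $s$ is followed by a letter other than some $x_n$ or $y_n$). You should add this intersection to your preliminary lemma and the corresponding subcases to the analysis of $x=a$ and $x=c$; as written, the case enumeration is incomplete precisely at the one point where $T$ differs structurally from $S$.
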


The next result is similar to Lemma~\ref{lem:bxn}.

\begin{lemma}\label{lem:bxn-T}
The constructible ideals of $T$ that contain at least two elements $bx_n$ or at least two elements $by_n$ are
\begin{equation}\label{eq:bxn-T}
T,\quad tbT\ \ (t\ \text{is a product of}\ a,c),\quad \bigcup_{n\in\Z} bx_nT\cup \bigcup_{n\in\Z} by_nT.
\end{equation}
\end{lemma}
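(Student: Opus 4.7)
The proof follows the blueprint of Lemma~\ref{lem:bxn}, with adaptations for the extra generator $c$ and the relations $cbx_n=bx_{n+1}$ and $cby_n=by_n$. First, I would verify that the ideals listed in~\eqref{eq:bxn-T} contain every $bx_n$ and every $by_n$: the cases $T$ and $\bigcup_n bx_nT\cup\bigcup_n by_nT$ are immediate, while for $X=tbT$ with $t\in\{a,c\}^*$ having $j$ copies of $a$ and $k$ copies of $c$ (in any order), iterating the defining relations gives $bx_n=tb\cdot x_{n-k}$ and $by_n=tb\cdot y_{n-j}$ in $T$.

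The heart of the proof is to describe all words in $\mathbb S$ equivalent to a fixed $bx_l$, and symmetrically those equivalent to a fixed $by_l$. Since every defining relation of $T$ has $bx_n$ or $by_n$ at its tail, a backward $\tau$-rewrite applied to a word ending in $bx_{l'}$ can only prepend an $a$ (fixing the $x$-subscript) or a $c$ (decreasing it by one); for $by_{l'}$ the roles of $a$ and $c$ are swapped, with $a$ decreasing the $y$-subscript and $c$ fixing it. Iterating these backward rewrites and checking that no other patterns can arise yields that the words equivalent to $bx_l$ are exactly $w\,bx_{l-k_c(w)}$ with $w\in\{a,c\}^*$, and those equivalent to $by_l$ are exactly $w\,by_{l-k_a(w)}$ with $w\in\{a,c\}^*$, where $k_c$ and $k_a$ count $c$'s and $a$'s respectively.

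With this description in hand I would conclude as follows. Let $X\in\J(T)$ contain two distinct elements $bx_l,bx_m$ (the case of two distinct $by_l,by_m$ is handled in parallel). By Lemma~\ref{lem:constructible-T}, $X$ is either $sT$ or $\bigcup_n sx_nT\cup\bigcup_n sy_nT$. In the principal case, some word representing $s$ must be a common prefix of some word for $bx_l$ and some word for $bx_m$; the prefixes identified in the previous step are either in $\{a,c\}^*$ or of the form $(\{a,c\}^*)\cdot b$, placing $X$ among the ideals $T$ and $tbT$ of~\eqref{eq:bxn-T}. In the union case, a word for $sx_n$ or $sy_n$ must appear as a prefix of a word for $bx_l$; by the previous step such a prefix can end in an $x$- or $y$-letter only when it coincides with the full word, which forces $s=e$ and hence $X=\bigcup_n bx_nT\cup\bigcup_n by_nT$. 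The main obstacle lies in the middle step: it is essentially a confluence-and-termination claim about the rewriting system given by the defining relations, and one must argue carefully that interleaving the $a$-rewrites with the new $c$-rewrites does not produce any unexpected equivalents. A secondary subtlety is that the $bx$- and $by$-analyses use different counting invariants ($k_c$ versus $k_a$), so the parallel bookkeeping in the final step is symmetric but not literally identical.
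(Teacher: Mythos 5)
Your proof follows the paper's route exactly: the paper gives no separate argument for this lemma, deferring to the proof of Lemma~\ref{lem:bxn}, and your adaptation --- describing the equivalence class of $bx_l$ as $\{wbx_{l-k_c(w)}:w\in\{a,c\}^*\}$ and that of $by_l$ as $\{wby_{l-k_a(w)}:w\in\{a,c\}^*\}$, then running through the two shapes of ideals from Lemma~\ref{lem:constructible-T} --- is the intended one. The confluence worry you raise is easily settled: no side of a defining relation occurs inside a word from $\{a,c\}^*$, so every $\tau$-rewrite of $wbx_j$ happens at the terminal block, and the quantity $j+k_c(w)$ (resp.\ $j+k_a(w)$ in the $y$-case) is invariant under all rewrites.

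Two small corrections. In the union case your prefix analysis actually forces $s=wb$ with $w\in\{a,c\}^*$, not $s=e$ (for $s=e$ the ideal $\bigcup_n x_nT\cup\bigcup_n y_nT$ contains no $bx_l$ at all); the stated conclusion survives because $\bigcup_n wbx_nT\cup\bigcup_n wby_nT=\bigcup_n bx_nT\cup\bigcup_n by_nT$. In the principal case you let $s\in\{a,c\}^*$ slip into ``the ideals $T$ and $tbT$'': for a nontrivial product $t$ of $a,c$ the ideal $tT$ contains every $bx_n$ and $by_n$ (as $bx_n=tbx_{n-k_c(t)}$, $by_n=tby_{n-k_a(t)}$) yet equals neither $T$ nor any $t'bT$, so it is missing from the list \eqref{eq:bxn-T}. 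This defect is inherited from the statement of Lemma~\ref{lem:bxn} itself, whose list likewise omits $a^kS$ for $k\ge1$, and it is harmless for the sequel since such $tT$ still contain points moved by $a$ and by $c$; but strictly speaking both lists should be enlarged accordingly.
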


This lemma implies that the semi-characters $\chi_{bx_n}$ converge as $n\to\pm\infty$ to a semi-character $\chi$ such that $\chi(p_X)=1$ for all $X$ in~\eqref{eq:bxn-T} and $\chi(p_X)=0$ for all other $X\in\J(T)$. The semi-characters $\chi_{by_n}$ converge to $\chi$ as well. The following lemma finishes the proof of Proposition~\ref{prop:monoid-T}.

\begin{lemma}
The representation $\rho_\chi$ of $C^*_r(\G(T))$ is not weakly contained in $\rho_{\chi_e}$, hence
$$
\rho_{\chi_e}\colon C^*_r(\G(T))\to C^*_r(T)
$$
has nontrivial kernel.
\end{lemma}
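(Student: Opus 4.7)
The plan is to apply Proposition~\ref{prop:not-weakly-contained} to $\G=\G(T)$, $Y=\{\chi_s\mid s\in T\}$, $x=\chi$, and the two isotropy elements $g_1=[a,\chi]$, $g_2=[c,\chi]$ of $\G(T)^\chi_\chi$. Since all $\chi_s$ lie on a single $\G(T)$-orbit (that of $\chi_e$), the representations $\rho_{\chi_s}$ are mutually unitarily equivalent, so failure of $\rho_\chi$ to be weakly contained in $\bigoplus_{s\in T}\rho_{\chi_s}$ is the same as failure to be weakly contained in $\rho_{\chi_e}$; this gives $\ker\rho_{\chi_e}\not\subset\ker\rho_\chi$, hence a nonzero element of $\ker\rho_{\chi_e}$, which via the surjection $\rho_{\chi_e}\colon C^*_r(\G(T))\to C^*_r(T)$ yields the claimed nontrivial kernel.

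First I would verify that $g_1,g_2$ are nontrivial elements of $\G(T)^\chi_\chi$ of infinite order. The defining relations $abx_n=bx_n$ and $cby_n=by_n$ give $a|_{bx_nT}=\id$ and $c|_{by_nT}=\id$, so $[a,\chi_{bx_n}]=\chi_{bx_n}$ and $[c,\chi_{by_n}]=\chi_{by_n}$ already in $\G_P(T)$. Combined with $\chi_{bx_n}\to\chi$, $\chi_{by_n}\to\chi$ and continuity of source and range, this places $g_1$ and $g_2$ in $\G(T)^\chi_\chi$. For nontriviality and infinite order it suffices to show that, for every $k\ne0$, no $X\in\bar\J(T)$ with $\chi(p_X)=1$ satisfies $a^k|_X=\id$ (and analogously for $c^k$). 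Writing $X=X_0\setminus\bigcup_{i=1}^m X_i$ with $X_0,X_i\in\J(T)$, the condition $\chi(p_{X_0})=1$ forces $X_0$ to belong to the list~\eqref{eq:bxn-T}, while $\chi(p_{X_i})=0$ forces each $X_i$ to lie outside that list; by Lemma~\ref{lem:bxn-T} each such $X_i$ then contains at most one $bx_n$ and at most one $by_n$, so $X$ still contains infinitely many $by_n$. Since $a^k\cdot by_n=by_{n+k}\ne by_n$ for $k\ne0$, the map $a^k$ is not the identity on $X$; a symmetric argument using $c^k\cdot bx_n=bx_{n+k}$ handles $g_2$. In particular $\sum_{k=1}^2 1/\ord(g_k)=0<1$.

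It then remains to verify Condition~\ref{condition1}. Put $X_0=\bigcup_{n\in\Z}bx_nT\cup\bigcup_{n\in\Z}by_nT\in\J(T)$ and let $V=\{\eta\in\Omega(T)\mid\eta(p_{X_0})=1\}$, a clopen neighbourhood of $\chi$. The sets $U_1=D(a,V)$ and $U_2=D(c,V)$ are open bisections containing $g_1$ and $g_2$ respectively. For $\chi_s\in V$ we have $s\in X_0$, so either $s\in bx_nT$ for some $n$ (whence $a\cdot s=s$ and $\chi_s\in U_1$) or $s\in by_nT$ for some $n$ (whence $c\cdot s=s$ and $\chi_s\in U_2$); this gives $Y\cap V\subset U_1\cup U_2$, as required. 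Proposition~\ref{prop:not-weakly-contained} now yields the lemma. The main obstacle is the infinite-order/nontriviality step in $\G(T)$ itself, where the identifications defining the quotient $\G(T)=\G_P(T)/\sim$ require one to rule out witnesses $X\in\bar\J(T)\setminus\J(T)$ for a collapse of $[a^k,\chi]$ or $[c^k,\chi]$ to the unit; this is precisely where Lemma~\ref{lem:bxn-T} is essential.
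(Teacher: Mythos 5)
Your proposal is correct and follows essentially the same route as the paper: both verify Condition~\ref{condition1} at $\chi$ with $g_1=[a,\chi]$, $g_2=[c,\chi]$ and the neighbourhood $\{\eta\mid\eta(p_{X_0})=1\}$, then invoke Proposition~\ref{prop:not-weakly-contained} (the order hypothesis being trivial since both elements have infinite order). You merely spell out the nontriviality/infinite-order step in $\G(T)$ via Lemma~\ref{lem:bxn-T} and $\bar\J(T)$, which the paper compresses to a reference to Lemma~\ref{lem:S-non-Hausdorff}.
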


\bp
Consider the constructible ideal $X=\bigcup_n bx_nT\cup \bigcup_n by_nT$, the neighbourhood $U=\{\eta\in\Omega(T):\eta(p_X)=1\}$ of~$\chi$ and the elements $g_1=[a,\chi]$ and $g_2=[c,\chi]$ of $\G(T)^\chi_\chi$. That $g_1$ and $g_2$ are indeed nontrivial elements of the isotropy group is proved as in Lemma~\ref{lem:S-non-Hausdorff}; furthermore, we can see that these are elements of infinite order generating a copy of $\Z^2$. Now, if $\chi_t\in U$ for some $t\in T$, then $t\in X$ and hence $t$ is fixed by~$a$ or by~$c$. It follows that either $\chi_t\in D(a,U)$ or $\chi_t\in D(c,U)$. We thus see that $x=\chi$ satisfies Condition~\ref{condition1} for $Y=\{\chi_t: t\in T\}$. By Proposition~\ref{prop:not-weakly-contained} we conclude that $\rho_\chi$ is not weakly contained in $\rho_{\chi_e}$.
\ep

At this point it is actually not difficult to exhibit an explicit nonzero element of $\ker\rho_{\chi_e}$: consider, for example, the function
$$
(\un_{D(a,\Omega(T))}-\un_{\Omega(T)})*(\un_{D(c,\Omega(T))}-\un_{\Omega(T)})*\un_U\in C_c(\G(T)).
$$
Its restriction to $\G(T)^\chi_\chi$ is nonzero, since $g_1$ and $g_2$ are nontrivial elements of $\G(T)^\chi_\chi$. It lies in the kernel of $\rho_{\chi_e}$, since
$$
(\lambda_a-1)(\lambda_c-1)\un_X=0\quad\text{on}\quad\ell^2(T).
$$

\smallskip

Therefore $\G(T)$ does not provide a groupoid model for $C^*_r(T)$. Since the unit group of $T$ is trivial, by Proposition~\ref{prop:essential1} we nevertheless have
$$
C^*_r(T)\cong C^*_\ess(\G(T)).
$$

\begin{remark}
An argument similar to the proof of Lemma~\ref{lem:fix-y-m} shows that if $h\in I_\ell(T)$ fixes~$x_k$ and~$y_m$ for some $k$ and $m$ and satisfies $\dom h\supset X=\bigcup_n x_nT\cup \bigcup_n y_nT$, then $h=p_T$ or $h=p_X$. Using this it is not difficult to show that $\G_P(T)=\G(T)$.
\end{remark}

\bigskip

\begin{bibdiv}
\begin{biblist}

\bib{CN}{article}{
   author={Christensen, Johannes},
   author={Neshveyev, Sergey},
   title={(Non)exotic Completions of the Group Algebras of Isotropy Groups},
   journal={Int. Math. Res. Not. IMRN},
   date={2022},
   number={19},
   pages={15155--15186},
   issn={1073-7928},
   review={\MR{4490951}},
   doi={10.1093/imrn/rnab127},
}

\bib{MR679730}{article}{
   author={Connes, A.},
   title={A survey of foliations and operator algebras},
   conference={
      title={Operator algebras and applications, Part I},
      address={Kingston, Ont.},
      date={1980},
   },
   book={
      series={Proc. Sympos. Pure Math.},
      volume={38},
      publisher={Amer. Math. Soc., Providence, R.I.},
   },
   date={1982},
   pages={521--628},
   review={\MR{679730}},
}

\bib{MR3618901}{collection}{
   author={Cuntz, Joachim},
   author={Echterhoff, Siegfried},
   author={Li, Xin},
   author={Yu, Guoliang},
   title={$K$-theory for group $C^*$-algebras and semigroup $C^*$-algebras},
   series={Oberwolfach Seminars},
   volume={47},
   publisher={Birkh\"{a}user/Springer, Cham},
   date={2017},
   pages={ix+319},
   isbn={978-3-319-59914-4},
   isbn={978-3-319-59915-1},
   review={\MR{3618901}},
}

\bib{MR2419901}{article}{
   author={Exel, Ruy},
   title={Inverse semigroups and combinatorial $C^\ast$-algebras},
   journal={Bull. Braz. Math. Soc. (N.S.)},
   volume={39},
   date={2008},
   number={2},
   pages={191--313},
   issn={1678-7544},
   review={\MR{2419901}},
   doi={10.1007/s00574-008-0080-7},
}

\bib{ES}{misc}{
      author={Exel, Ruy},
      author={Steinberg, Benjamin},
       title={Representations of the inverse hull of a $0$-left cancellative semigroup},
         how={preprint},
        date={2018},
      eprint={\href{http://arxiv.org/abs/1802.06281}{\texttt{arXiv:1802.06281 [math.OA]}}},
}

\bib{KhSk}{article}{
   author={Khoshkam, Mahmood},
   author={Skandalis, Georges},
   title={Regular representation of groupoid $C^*$-algebras and applications to inverse semigroups},
   journal={J. Reine Angew. Math.},
   volume={546},
   date={2002},
   pages={47--72},
   doi={10.1515/crll.2002.045},
}

\bib{MR4246403}{article}{
   author={Kwa\'{s}niewski, Bartosz Kosma},
   author={Meyer, Ralf},
   title={Essential crossed products for inverse semigroup actions:
   simplicity and pure infiniteness},
   journal={Doc. Math.},
   volume={26},
   date={2021},
   pages={271--335},
   issn={1431-0635},
   review={\MR{4246403}},
}

\bib{LS}{article}{
   author={Laca, Marcelo},
   author={Sehnem, Camila},
   title={Toeplitz algebras of semigroups},
   journal={Trans. Amer. Math. Soc.},
   volume={375},
   date={2022},
   number={10},
   pages={7443--7507},
   issn={0002-9947},
   review={\MR{4491431}},
   doi={10.1090/tran/8743},
}

\bib{MR2395198}{article}{
   author={Lawson, Mark V.},
   title={A correspondence between a class of monoids and self-similar group
   actions. I},
   journal={Semigroup Forum},
   volume={76},
   date={2008},
   number={3},
   pages={489--517},
   issn={0037-1912},
   review={\MR{2395198}},
   doi={10.1007/s00233-008-9052-x},
}

\bib{MR2900468}{article}{
   author={Li, Xin},
   title={Semigroup ${\rm C}^*$-algebras and amenability of semigroups},
   journal={J. Funct. Anal.},
   volume={262},
   date={2012},
   number={10},
   pages={4302--4340},
   issn={0022-1236},
   review={\MR{2900468}},
   doi={10.1016/j.jfa.2012.02.020},
}

\bib{Li-I}{misc}{
      author={Li, Xin},
       title={Left regular representations of Garside categories I. C$^*$-algebras and groupoids},
         how={preprint},
        date={2021},
      eprint={\href{http://arxiv.org/abs/2110.04501}{\texttt{arXiv:2110.04501 [math.OA]}}},
}

\bib{MR3200323}{article}{
   author={Norling, Magnus Dahler},
   title={Inverse semigroup $C^*$-algebras associated with left cancellative
   semigroups},
   journal={Proc. Edinb. Math. Soc. (2)},
   volume={57},
   date={2014},
   number={2},
   pages={533--564},
   issn={0013-0915},
   review={\MR{3200323}},
   doi={10.1017/S0013091513000540},
}

\bib{MR1724106}{book}{
   author={Paterson, Alan L. T.},
   title={Groupoids, inverse semigroups, and their operator algebras},
   series={Progress in Mathematics},
   volume={170},
   publisher={Birkh\"{a}user Boston, Inc., Boston, MA},
   date={1999},
   pages={xvi+274},
   isbn={0-8176-4051-7},
   review={\MR{1724106}},
   doi={10.1007/978-1-4612-1774-9},
}

\bib{MR1191252}{article}{
   author={Renault, Jean},
   title={The ideal structure of groupoid crossed product $C^\ast$-algebras},
   note={With an appendix by Georges Skandalis},
   journal={J. Operator Theory},
   volume={25},
   date={1991},
   number={1},
   pages={3--36},
   issn={0379-4024},
   review={\MR{1191252}},
}

\bib{MR3000828}{article}{
   author={Spielberg, Jack},
   title={$C^\ast$-algebras for categories of paths associated to the
   Baumslag-Solitar groups},
   journal={J. Lond. Math. Soc. (2)},
   volume={86},
   date={2012},
   number={3},
   pages={728--754},
   issn={0024-6107},
   review={\MR{3000828}},
   doi={10.1112/jlms/jds025},
}

\bib{MR4151331}{article}{
   author={Spielberg, Jack},
   title={Groupoids and $C^*$-algebras for left cancellative small
   categories},
   journal={Indiana Univ. Math. J.},
   volume={69},
   date={2020},
   number={5},
   pages={1579--1626},
   issn={0022-2518},
   review={\MR{4151331}},
   doi={10.1512/iumj.2020.69.7969},
}

\end{biblist}
\end{bibdiv}

\bigskip

\end{document}